\newcommand{\sgn}[1]{\mathrm{sgn}(#1)}
\newcommand{\Std}{\operatorname{Std}}
\newcommand{\tab}{\operatorname{tab}}
\newcommand{\Tab}{\mathcal{T}}
\newcommand{\Semistd}{\mathcal{T}_0}
\renewcommand{\tt}{\mathfrak t}
\renewcommand{\ss}{\mathfrak s}
\newcommand{\uu}{\mathfrak u}
\newcommand{\vv}{\mathfrak v}
\newcommand{\oo}{\mathfrak o}
\renewcommand{\aa}{\mathfrak a}
\newcommand{\bb}{\mathfrak b}
\newcommand{\cc}{\mathfrak c}
\newcommand{\ZZ}{\mathbb Z}
\newcommand{\Endo}{E}
\newcommand{\shape}{\operatorname{shape}}
\newcommand{\fil}{\operatorname{fill}}
\newcommand{\op}{\operatorname{anti}}
\newcommand{\Res}{\operatorname{Res}}
\newcommand{\Rat}{\operatorname{Rat}}
\newcommand{\I}{\bf I}
\newlength{\tangle}
\newlength{\tangletwo}
\newlength{\tanglethree}
\newlength{\productone}
\newlength{\producttwo}
\newlength{\productthree}
\newlength{\unit}
\newcommand{\htangle}[2]{\settoheight{\tangle}
  {\epsfysize=#2 \epsfbox{figures/#1}}
  \addtolength{\tangle}{-1ex}
  \raisebox{-.5\tangle}{\epsfysize=#2 \epsfbox{figures/#1}}}
\newcommand{\wtangle}[2]{\settoheight{\tangle}
  {\epsfxsize= #2\epsfbox{figures/#1}}
  \addtolength{\tangle}{-1ex}
  \raisebox{-.5\tangle}{\epsfxsize=#2 \epsfbox{figures/#1}}}
\newcommand{\tuple}[3]{ \left(
    \ifthenelse{\equal{#1}{0}}{\emptyset}{
      \settoheight{\tangle}{\epsfbox{figures/#1}}
      \raisebox{-#3\tangle}{
        \raisebox{#3\unit}{\epsfysize=#3\tangle \epsfbox{figures/#1}}}}\;,
    \ifthenelse{\equal{#2}{0}}{\emptyset}{
      \settoheight{\tangletwo}{\epsfbox{figures/#2}}
      \raisebox{-#3\tangletwo}{
        \raisebox{#3\unit}{\epsfysize=#3\tangletwo
          \epsfbox{figures/#2}}}}
    \;
  \right)}
\newcommand{\triple}[5]{ \left(
    \FPmul{\productone}{#4}{#5}
    \FPmul{\producttwo}{\productone}{.5}
    \FPmul{\productthree}{#4}{.5}
    \ifthenelse{\equal{#1}{0}}{\emptyset}{
      \settoheight{\tangle}{\epsfbox{figures/#1}}
      \raisebox{\producttwo\unit}{
        \raisebox{-#4\tangle}{
          \raisebox{\productthree\unit}{
            \epsfysize=#4\tangle \epsfbox{figures/#1}}}}}\;,
    \ifthenelse{\equal{#2}{0}}{\emptyset}{
      \settoheight{\tangletwo}{\epsfbox{figures/#2}}
      \raisebox{\producttwo\unit}{\raisebox{-#4\tangletwo}{
          \raisebox{\productthree\unit}{\epsfysize=#4\tangletwo
            \epsfbox{figures/#2}}}}}\;, 
    \ifthenelse{\equal{#3}{0}}{\emptyset}{
      \settoheight{\tanglethree}{\epsfbox{figures/#3}}
      \raisebox{\producttwo\unit}{\raisebox{-#4\tanglethree}{
          \raisebox{\productthree\unit}{\epsfysize=#4\tanglethree
            \epsfbox{figures/#3}}}}} \quad
  \right)}
\newtheorem{thm}{Theorem}
\newtheorem{lemma}[thm]{Lemma}
\newtheorem{prop}[thm]{Proposition}
\newtheorem{cor}[thm]{Corollary}
\newtheorem{defn}[thm]{Definition}
\newtheorem{exmp}[thm]{Example}
\newtheorem{remark}[thm]{Remark}
\journal{}
\begin{document}

\begin{frontmatter}



\title{A cell filtration of mixed tensor space}


\author[s]{F. Stoll}
\ead{stoll@mathematik.uni-stuttgart.de}

\author[s]{M. Werth}
\ead{werth@mathematik.uni-stuttgart.de}

\address[s]{Institut f\"ur Algebra und Zahlentheorie,
  Universit\"at Stuttgart,
  Pfaffenwaldring 57,
  70569 Stuttgart,
  Germany}

\begin{abstract}
We construct a cellular basis of the walled Brauer algebra which has similar properties as the Murphy basis of the group algebra of the symmetric group. In particular, the restriction of a cell module to a certain subalgebra can be easily described via this basis. Furthermore, the mixed tensor space possesses a filtration by cell modules -- although not by cell modules of the walled Brauer algebra itself, but by cell modules of its image in the algebra of endomorphisms of mixed tensor space.

\end{abstract}

\begin{keyword}
walled Brauer algebra \sep mixed tensor space \sep cellular algebra \sep filtration \sep annihilator 



\MSC[2010]  16D20 \sep 16G30 

\end{keyword}

\end{frontmatter}



\input epsf

\section{Introduction}

Throughout, let $n$ be a natural number, $r$, $s$ and $m$  non-negative integers and let $R$ be a commutative ring. For $x\in R$,  
the walled Brauer algebra $B_{r,s}(x)$ is the subalgebra of the Brauer algebra $B_{r+s}(x)$ spanned by certain diagrams, the walled Brauer diagrams. Let $V$ be a free $R$-module of rank $n$ and let $V^*=\mathrm{Hom}_R(V,R)$. Then the walled Brauer algebra for the parameter $x=n$ acts on the mixed tensor space $V^{\otimes r}\otimes {V^*}^{\otimes s}$ and satisfies Schur-Weyl duality together with the action of the universal enveloping algebra $U$ of the general linear algebra (see \cite{bchlls,koike,turaev,dipperdotystoll1,dipperdotystoll2}).  

This situation is very similar to (in fact a generalization of) the classical Schur-Weyl duality where both the group algebra $R\mathfrak{S}_m$ of the symmetric group and $U$ act on the ordinary tensor space $V^{\otimes m}$
(\cite{schur,weyl,green}).  The group algebra of the symmetric group is a cellular algebra in the sense of Graham and Lehrer (\cite{grahamlehrer}) with a cellular basis (the Murphy basis, see \cite{murphy}) which has remarkable properties and one may ask, if the walled Brauer algebra has a cellular basis with similar properties: 

\begin{itemize}
\item If a cell module of $R\mathfrak{S}_m$ is restricted to the subalgebra $R\mathfrak{S}_{m-1}$, then this module has a filtration by cell modules of $R\mathfrak{S}_{m-1}$.  One can obtain this filtration canonically from the Murphy basis and the involved combinatorics are quite simple. Can one construct a basis of the walled Brauer algebra with the same property?
\item The annihilator of $R\mathfrak{S}_m$ on the ordinary tensor space is a cell ideal (\cite{haerterich}). Is the same true for the annihilator of the walled Brauer algebra on the mixed tensor space? Or is there at least a cellular basis of the walled Brauer algebra such that for each $n$, a subset of this basis spans the annihilator? Or is the factor algebra at least again a cellular algebra?
\item The ordinary tensor space has a filtration by cell modules of $R\mathfrak{S}_m$, in fact it has a filtration with  $U$-$R\mathfrak{S}_m$-bimodules which are tensor products of a certain  $U$-module with a cell module for $R\mathfrak{S}_m$. Is the mixed tensor space filtered by corresponding cell modules/bimodules as well?
\end{itemize}
 
The known cellular bases do not have these properties and we will define a new cellular basis of the walled Brauer algebra, such that we can  give  positive answers to these three questions. 
Before we describe the answers, a subtle point has to be noted:
there cannot exist  a generic cellular structure on the walled Brauer algebra such that the mixed tensor space is filtered by cell modules with respect to this cellular structure. By `generic' we mean a cellular structure for $R=\mathbb{Z}[x]$ which can be specialized to cellular structures for arbitrary $R$ and $x$.
 
Consider for example the case $r=s=2$. The  walled Brauer algebra over $\mathbb{C}(x)$ with parameter $x$ is semisimple, there are four irreducible modules of  dimension $1$, one of dimension $2$ and one of dimension $4$. Although, there might be different (generic) cellular structures on the walled Brauer algebra, the dimensions of the cell modules must coincide with the dimensions of the irreducible modules in the semimsimple case. If $n=2$ and $R=\mathbb{C}$, then the action of the walled Brauer algebra on the mixed tensor space is semisimple since the action of $U$ is semisimple. In this case, the mixed tensor space decomposes into a direct sum of three $3$-dimensional, one $2$-dimensional and five  one-dimensional irreducible modules. Thus, it follows that the mixed tensor space does not have a cell filtration, since the only cell module that might have a $3$-dimensional constituent is $4$-dimensional, but  this cell module is not  semisimple. In the same way,  the annihilator is not a cell ideal, otherwise the walled Brauer algebra modulo the annihilator would be again a cellular algebra and semisimple with irreducible modules of dimension $1,2$ or $4$, a contradiction. 
A similar phenomenon has been observed in \cite{henkepaget} for the action of the Brauer algebra on tensor space. 
   
The example above shows that if a cellular basis of the walled Brauer algebra includes a basis of the annihilator (when $x$ is specialized to $n$) then there might be elements $\lambda$ in the poset $(\Lambda,\leq)$ attached to the cell datum	 such that there are basis elements $C_{S,T}^\lambda$  which lie in the annihilator as well as basis elements which do not lie in the annihilator. Note that only for $x=n$ the walled Brauer algebra acts on some mixed tensor space and then the annihilator is defined. We will construct a (generic) basis of the walled Brauer algebra such that under specialization a basis element is in the annihilator iff $S$ or $T$ is not in a certain subset of $M(\lambda)$ (depending on $n$). Figure \ref{fig:illustrationofannihilator} illustrates this in the case $r=s=2$. The big squares stand for the elements of the poset $\Lambda$. Each big square (which corresponds to $\lambda\in \Lambda$) consists of an  array of small  squares, the number of rows/columns is equal to the cardinality  of $M(\lambda)$. So, each small square represents one basis element. The basis elements in the annihilator for various $n$ are those in the shaded areas. 

\begin{figure}
  \begin{center}
    \begin{subfigure}[b]{0.2\linewidth}
      \begin{center}
        \epsfxsize=.4\textwidth
        \epsfbox{figures/blockx.21}
        \caption*{$n = \dim V \geq 4$}
      \end{center}
    \end{subfigure}
    \begin{subfigure}[b]{0.2\linewidth} 
      \begin{center}
        \epsfxsize=.4\textwidth
        \epsfbox{figures/blockx.22}
        \caption*{$n = 3$}
      \end{center}
    \end{subfigure}
    \begin{subfigure}[b]{0.2\linewidth}
      \begin{center}
        \epsfxsize=.4\textwidth
        \epsfbox{figures/blockx.23}
        \caption*{$n = 2$}
      \end{center}
    \end{subfigure}
    \begin{subfigure}[b]{0.2\linewidth}
      \begin{center}
        \epsfxsize=.4\textwidth
        \epsfbox{figures/blockx.24}
        \caption*{$n = 1$}
      \end{center}
    \end{subfigure}
  \end{center}
  \caption{}
  \label{fig:illustrationofannihilator}
\end{figure}

The example suggests that the walled Brauer algebra modulo its annihilator is again a cellular algebra by taking cosets of the non-shaded basis elements, with cell datum obtained from those of the walled Brauer algebra by eventually omitting elements in the sets $M(\lambda)$.  In fact, the constructed basis has this property and thus the factor algebra of the walled Brauer algebra modulo the annihilator is a cellular algebra. We show that we can filter the mixed tensor space by cell modules of the factor algebra. 

Instead of this basis containing a basis of the annihilator, we use basis  elements which can be easily defined and still have the property that a subset of the cosets of basis elements forms a cellular basis of the factor. Restriction of cell modules can be canonically described with this basis.
Summarizing the main results, 
\begin{itemize}
\item  Theorem~\ref{thm:basis} establishes the existence  of the cellular basis of the walled Brauer algebra, 
\item Theorem~\ref{thm:restriction} shows that restriction of cell modules is canonical with respect to this basis.
\item In Theorem~\ref{thm:filtration}, we give a filtration of the mixed tensor space by certain bimodules, and thus cell modules of the algebra of $U$-endomorphisms. 
\end{itemize}

This paper is organized as follows: in Section~\ref{section:preliminaries}, we supply details about the action of the walled Brauer algebra on the mixed tensor space and cellular algebras. In Section~\ref{section:combinatorics} we give a combinatorial description of the rank of the annihilator of the walled Brauer algebra on the mixed tensor space starting with the trivial $U$- (or $\mathbb{C}GL_n$-)module and successively tensoring with $V$ and $V^*$. This leads to appropriate cell data for the cellular structures of both the walled Brauer algebra and its image in the algebra of $U$-endomorphisms (for $x=n$). Similar to the symmetric group case,  
 the elements of indexing sets in such a cell datum are paths of tuples of partitions. In 
Section~\ref{section:basis} we replace these paths by triples of certain tableaux and define the new cellular basis of the walled Brauer algebra. Section~\ref{section:restriction} investigates the behaviour of cell modules under  restriction to the subalgebra  $B_{r,s-1}(x)$ for $s\geq 1$ or $B_{r-1,0}$ for $r\geq 1, s=0$. It turns out that the isomorphism between a factor and a cell module of the subalgebra can be defined by mapping a basis to a basis.  The corresponding map on the triples of tableaux labeling  the basis elements can be described quite easily. In Section~\ref{section:annihilatorbasis} we define another (weakly) cellular basis of the walled Brauer algebra such that for each $n$ and $x$ specialized to $n$, a subset of this basis is a basis of the annihilator. This shows that the algebra of $U$-endomorphisms inherits the desired cellular structure. Section~\ref{section:ordinary_tensor_space} recalls some facts about the ordinary tensor space which are needed to finally show in Section~\ref{section:filtration_mixed_tensor_space} that the mixed tensor space is filtered by cell modules and moreover by bimodules.


\section{Preliminaries}\label{section:preliminaries}
Throughout, let $r$, $s$ and  $m$ be fixed nonnegative integers. 
Furthermore, let  $n$ be  a natural number. Let 
$R$ be a commutative ring with $1$ and $x\in R$. 

\subsection{The walled Brauer algebra}
A \emph{Brauer diagram} $d$ is a graph with $2m$ vertices such that each
vertex is connected to precisely	 one other vertex. 
Usually, the vertices are located in two rows, $m$ vertices in an
upper row and $m$ vertices in a bottom row, and the edges are drawn
inside of this rectangle (see
Figure~\ref{figure:brauer-diagram}). Edges connecting vertices of one
row are 
called \emph{horizontal}, edges connecting vertices of different rows
are called \emph{vertical}. 

\begin{figure}[h!]
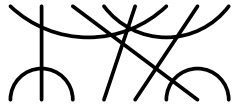

	\centering
  \wtangle{examples.1}{3cm}
	\caption{A \emph{Brauer diagram}}\label{figure:brauer-diagram}
\end{figure}

\begin{defn}[\cite{brauer}]
  The \emph{Brauer algebra} $B_m(x)$ is the $R$-algebra which is free as an
  $R$-module with basis $\{d\mid d\text{ a Brauer diagram with
  }2m\text{ vertices}\}$. The multiplication is given by concatenation
  of diagrams, closed cycles are deleted by multiplying $x$ (see
Figure~\ref{figure:multiplication}). 
\end{defn}

\begin{figure}[h!]
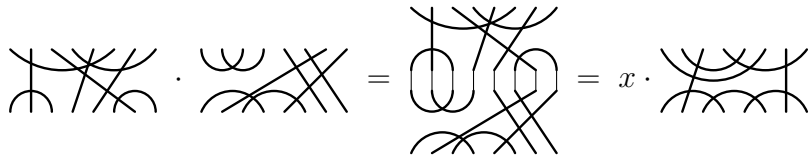

  \centering
  $\wtangle{examples.1}{2cm} \;\cdot\; \wtangle{examples.2}{2cm}
  \;=\; \wtangle{examples.3}{2cm} \;=\; x\cdot \wtangle{examples.4}{2cm}$
  \caption{Multiplication in the Brauer
    algebra}\label{figure:multiplication} 
\end{figure}

Note that each permutation $\pi$ of $m$ letters corresponds to a
Brauer diagram without horizontal edges, namely the diagram that
connects the $i$-th vertex in the upper row with the $\pi(i)$-th
vertex in the bottom row.  Thus, the group algebra $R\mathfrak{S}_m$
is a subalgebra of the Brauer algebra.
 
Now let $m=r+s$. Consider a vertical wall between the $r$-th and
$r+1$-st vertex in each row. A Brauer diagram is called \emph{walled
  Brauer diagram} if all horizontal edges cross the wall and all
vertical edges do not cross the wall (see
Figure~\ref{figure:walleddiagram}).

\begin{figure}[h!]
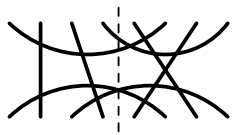

  \centering
  \wtangle{examples.11}{3cm}
  \caption{A walled Brauer diagram}\label{figure:walleddiagram}
\end{figure}

\begin{defn}[\cite{bchlls,koike,turaev}]
  The \emph{walled Brauer algebra} $B_{r,s}(x)$ is the subalgebra of
  $B_m(x)$  generated by the walled Brauer diagrams. In
  fact, the walled Brauer diagrams form an $R$-basis of $B_{r,s}(x)$.
\end{defn}
 Again, $R(\mathfrak{S}_r\times \mathfrak{S}_s)$ is a subalgebra of the walled
Brauer algebra. 
Another way to decide whether or not a given Brauer diagram is a
walled diagram (with $r$ and $s$ given) is the following: Draw 
arrows pointing downwards at the first $r$ vertices in both rows. At
the other vertices draw  arrows pointing upwards. Then a Brauer
diagram is a walled diagram if and only if
the edges  can be oriented consistently with
the arrows at the vertices (see
Figure~\ref{figure:orienteddiagram}). 

\begin{figure}[h!]
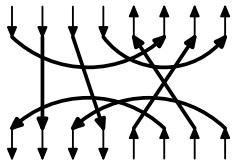

	\centering
  \wtangle{examples.12}{3cm}
	\caption{A walled Brauer diagram with
          orientation}\label{figure:orienteddiagram} 
\end{figure}

Now, the notion of a walled Brauer diagram can be generalized to diagrams with not necessarily the same numbers of vertices in the top and bottom row. If two sequences 
$\bf I$ and $\bf J$ with entries in $\{\downarrow, \uparrow\}$ are given, then in a generalized Brauer diagram of type $(\bf I,\bf J)$ the vertices in the top row are labeled by the entries of $\bf I$, the vertices in the bottom row are labeled by $\bf J$ and the edges of the diagram again connect the vertices consistently with
the arrows. Therefore, a walled Brauer diagram is the same as  a generalized diagram of type $((\downarrow^r,\uparrow^s),(\downarrow^r,\uparrow^s))$. If two diagrams of type $(\bf I,\bf J)$ and $(\bf J,\bf K)$ are given, then these diagrams can be concatenated to a diagram of type $(\bf I,\bf K)$.

\subsection{The mixed tensor space}
Let $V$ be an $R$-free module of rank $n$ with basis
$\{v_1,\ldots,v_n\}$. Let $I(n,m)$ be the set of $m$-tuples with
entries in $\{1,\ldots,n\}$. Then a basis of $V^{\otimes m}$ is given by
$\{v_{\mathbf i}=v_{i_1}\otimes v_{i_2}\otimes\cdots \otimes v_{i_m}\mid
\mathbf{i}=(i_1,\ldots,i_m)\in I(n,m) \}$. 
The Brauer algebra
$B_m(n)$ acts on $V^{\otimes m}$ which is called the \emph{tensor
  space}. If $d$ is a Brauer diagram, then the matrix (acting from the
right) of the
endomorphism induced by $d$ with respect to the basis above is
obtained in the following way: for $\mathbf i,\mathbf j\in I(n,m)$
write $i_1,\ldots,i_m$ at the vertices in the top row of the diagram
and $j_1,\ldots,j_m$ at the vertices in the bottom row. The matrix
entry at the position $\mathbf i,\mathbf j$ is $1$ if for all edges in
$d$  both ending vertices have the same number and $0$ otherwise. 
This action extends the action of the symmetric group
$\mathfrak{S}_m$ permuting the components of the tensor product.

Let $V^*=\mathrm{Hom}_R(V,R)$, which is as an $R$-module isomorphic to
$V$. Let $\{v_i^*\mid i=1,\ldots,n\}$ be the basis of $V^*$ dual to
$\{v_i\mid i=1,\ldots,n\}$. By identifying $v_i^*$ and $v_i$, 
the walled Brauer algebra $B_{r,s}(n)$   acts as a subalgebra
of the Brauer algebra on
$V^{\otimes r}\otimes {V^*}^{\otimes s}$ called the \emph{mixed tensor
  space}. 
More generally, a generalized walled Brauer diagram of type $(\bf I,\bf J)$ induces a homomorphism from $V_{\bf I}$ to $V_{\bf J}$ where $V_{\bf I}$ is a tensor product of $V$'s and $V^*$'s such that $\downarrow$ stands for $V$ and $\uparrow$ stands for $V^*$.  Concatenation of those diagrams and deleting cycles by multiplication with $n$ is compatible with composition of homomorphisms.

Let $L$ be a linear combination of such generalized Brauer diagrams
such that the number and orientation of vertices in the top row of each diagram
coincide, the same for the bottom row. By a diagram involving a box
containing $L$ we mean the linear combination we get by taking the
corresponding linear combination of diagrams obtained by replacing the
box by the smaller diagram. We will use the term diagram also for
diagrams containing boxes. 

\begin{exmp}
  Let $L= 3\cdot \htangle{generalbrauer.1}{.5cm}-2\cdot
  \htangle{generalbrauer.2}{.5cm}$. Then

  \[
  \htangle{generalbrauer.3}{1.5cm}=3\cdot 
  \htangle{generalbrauer.4}{1.5cm}-2\cdot 
  \htangle{generalbrauer.5}{1.5cm}
  \] 
\end{exmp}

Note that placing two diagrams next to each other corresponds to
taking the tensor product of the corresponding maps. 
This has the following consequence:
\begin{remark}
  If a diagram contains a box inducing the zero map, then this diagram
  itself induces the zero map. 
\end{remark}  
  
  If $R=\mathbb{C}$ the quantum group algebra
$\mathbb{C}GL_n(\mathbb{C})$ also acts on the
mixed tensor space and this action commutes with the action of the
walled Brauer algebra. We have the  following famous result:

\begin{thm}[Schur-Weyl duality, \cite{bchlls}]\label{theorem:Schur-Weyl-duality}
  Let $B_{r,s}(n)$ be the walled
  Brauer algebra over $\mathbb{C}$.  
  \begin{enumerate}
  \item The $\mathbb{C}$-algebra homomorphism
    \[
    B_{r,s}(n)\to \mathrm{End}_{\mathbb{C}GL_n(\mathbb{C})}(V^{\otimes r}\otimes
    {V^*}^{\otimes s})
    \]
    is surjective.
\item The $\mathbb{C}$-algebra homomorphism 
  \[
    \mathbb{C}GL_n(\mathbb{C})\to \mathrm{End}_{B_{r,s}(n)}(V^{\otimes r}\otimes
    {V^*}^{\otimes s})
    \]
    is surjective.

\end{enumerate}  
\end{thm}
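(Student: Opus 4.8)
The plan is to prove the two surjectivities in turn, obtaining the second from the first by a double centralizer argument. Write $M=V^{\otimes r}\otimes{V^*}^{\otimes s}$ and let $A_1,A_2\subseteq\mathrm{End}_{\mathbb{C}}(M)$ be the images of $\mathbb{C}GL_n(\mathbb{C})$ and of $B_{r,s}(n)$ in $\mathrm{End}_{\mathbb{C}}(M)$. These two subalgebras centralize each other: the action of $B_{r,s}(n)$ is built from permutations of the $V$-factors and of the ${V^*}$-factors, from the evaluation $V\otimes V^{*}\to\mathbb{C}$ and from the coevaluation $\mathbb{C}\to V\otimes V^{*}$, $1\mapsto\sum_{i}v_i\otimes v_i^{*}$, and each of these is $GL_n$-equivariant, while $GL_n$ acts on every tensor factor in the natural way. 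Hence both homomorphisms in the statement make sense, assertion~(1) is the equality $A_2=\mathrm{End}_{\mathbb{C}GL_n}(M)$, and assertion~(2) is $A_1=\mathrm{End}_{B_{r,s}(n)}(M)$.

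For~(1) I would invoke the first fundamental theorem of invariant theory for $GL_n$ (\cite{weyl}): for any finite tuples of $V$'s and $V^{*}$'s, every $GL_n$-equivariant map between the associated tensor products is a linear combination of contraction maps, obtained by pairing the $V$-factors of source and target with the $V^{*}$-factors via the evaluation and coevaluation above. Under the identification $\mathrm{End}_{\mathbb{C}}(M)\cong M^{*}\otimes M\cong{V^*}^{\otimes r}\otimes V^{\otimes s}\otimes V^{\otimes r}\otimes{V^*}^{\otimes s}$, each such pairing is exactly a perfect matching of the $2(r+s)$ vertices of a two-row diagram that can be oriented consistently with the arrows in Figure~\ref{figure:orienteddiagram}, i.e.\ a walled Brauer diagram, and it acts as the endomorphism induced by that diagram. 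This gives $A_2=\mathrm{End}_{\mathbb{C}GL_n}(M)$. Alternatively, one can deduce this from classical Schur--Weyl duality (\cite{schur,weyl,green}) using the isomorphism $V^{*}\cong\Lambda^{n-1}V\otimes(\Lambda^{n}V)^{-1}$, which realizes $M$, up to a twist by a power of the determinant, as a $GL_n$-direct summand of the ordinary tensor space $V^{\otimes(r+(n-1)s)}$.

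For~(2), observe that $M$ is a finite-dimensional rational representation of the reductive group $GL_n(\mathbb{C})$, hence completely reducible; therefore $M$ is a faithful semisimple $A_1$-module and $A_1$ is a semisimple algebra. The double centralizer theorem then yields $\mathrm{End}_{\mathrm{End}_{A_1}(M)}(M)=A_1$, and by~(1) we have $\mathrm{End}_{A_1}(M)=\mathrm{End}_{\mathbb{C}GL_n}(M)=A_2$, so $\mathrm{End}_{B_{r,s}(n)}(M)=\mathrm{End}_{A_2}(M)=A_1$, which is~(2); in particular $A_2$ is semisimple even though $B_{r,s}(n)$ itself need not be. The double centralizer step and the complete reducibility of $M$ are routine, so the real content is~(1): the assertion that contraction diagrams already span every $GL_n$-equivariant endomorphism of $M$. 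I expect this to be the main obstacle; it is also the only place where the rank $n$ enters, and although there are linear dependences among the diagrams when $n$ is small, the spanning property---which is all that surjectivity requires---persists.
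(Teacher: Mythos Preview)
Your argument is sound: part~(1) via the first fundamental theorem for $GL_n$ (or the alternative embedding $V^*\hookrightarrow\Lambda^{n-1}V\otimes(\Lambda^nV)^{-1}$, which the paper itself exploits later in Section~\ref{section:filtration_mixed_tensor_space}) and part~(2) via the double centralizer theorem applied to the semisimple image $A_1$ is the standard route and works as you describe.

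There is, however, nothing to compare against: the paper does not prove this theorem. It is stated with attribution to \cite{bchlls} and immediately followed only by the remark that the result extends to arbitrary commutative rings via \cite{dipperdotystoll1,dipperdotystoll2}. So your write-up supplies a proof where the paper simply cites one.
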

 Let $U_{\ZZ}$ be the $\ZZ$-form of the universal enveloping algebra of the Lie algebra $\mathfrak{gl}_n$ defined as in \cite{carterlusztig} and let $U=U_R= R\otimes_{\ZZ} U_{\ZZ} $. Then is follows from the results in \cite{dipperdotystoll1} and \cite{dipperdotystoll2} for the special case $q=1$, that 
 this result holds more generally when $\mathbb{C}$ is replaced by 
    a commutative ring $R$ with identity and the group algebra
    $\mathbb{C}GL_n(\mathbb{C})$ is replaced by $U_R$.

\subsection{Cellular algebras}
\begin{defn}\cite{grahamlehrer}\label{defn:cellular_algebra}
  A \emph{cellular algebra} over $R$ is an $R$-algebra $A$ together
  with a   partially ordered set $(\Lambda,\leq)$,
  for each $\lambda\in\Lambda$ a
  finite set $M(\lambda)$ and an $R$-basis 
  \[B=\{C_{S,T}^\lambda\mid \lambda\in\Lambda,S,T\in M(\lambda)\}\]
  of $A$ such that the following conditions hold: 
  \begin{enumerate}
  \item[C1:] The map $*:A\to A: C_{S,T}^\lambda\mapsto
    (C_{S,T}^\lambda)^*=C_{T,S}^\lambda$  linearly extends to an
    anti-involution of $A$. 
  \item[C2:] If $\lambda\in\Lambda$ and $T,T'\in M(\lambda)$ then for each
    $a\in A$ there exist $r_a(T,T')\in R$ such that
    \[
    C_{S,T}^\lambda a\equiv \sum_{T'\in M(\lambda)} r_a(T,T')
    C_{S,T'}^\lambda\mod A(<\lambda)
    \]
    for $S\in M(\lambda)$
    where $A(<\lambda)$ is the $R$-submodule of $A$ generated by
    $\{C_{S'',T''}^\mu\mid \mu<\lambda,S'',T''\in M(\mu)\}$ 
  \end{enumerate}
  The basis $B$ is called \emph{cellular basis}. 
\end{defn}

\begin{remark}[\cite{goodmangraber}]
  Condition~C1~in Definition~\ref{defn:cellular_algebra} can be
  weakened to the following condition without loosing the results of
  \cite{grahamlehrer}: 
  \begin{enumerate}
  \item[C1':]  
    There is an anti-involution $*:A\to A$ such that
    \[{C_{S,T}^\lambda}^*\equiv C_{T,S}^\lambda\mod A(<\lambda).\]
  \end{enumerate}
  If $A$ is an $R$- algebra satisfying the same conditions as a
  cellular algebra except for C1 which is replaced by C1', then we call
  $A$ a \emph{weakly cellular algebra}.
 	If $2$ is invertible in $R$, then a weakly cellular algebra is a cellular algebra.
\end{remark}

One of the most important examples for cellular algebras is the group
algebra of the symmetric group $R\mathfrak{S}_m$.
Since we will use a cellular basis of this algebra, we 
recall the construction of such a basis
due to Murphy (\cite{murphy}).

A \emph{composition} $\lambda$ of $m$ is a sequence
$\lambda=(\lambda_1,\lambda_2,\ldots)$ of nonnegative integers whose
sum is $m$. We write $\lambda\models m$. 
Repeated occurrences of the same integer are indicated by exponents. 
If $\lambda$ and $\mu$ are compositions of $m$, we say that $\lambda$
\emph{dominates} $\mu$ and write $\lambda\unrhd\mu$ if for all $l\geq 1$ we
have $\sum_{i=1}^l\lambda_i\geq \sum_{i=1}^l\mu_i$. 

If $\lambda$ is a composition
of $m$ such that $\lambda_1\geq \lambda_2\geq\ldots$, then we say that
$\lambda$ is a \emph{partition} of $m$ and write $\lambda\vdash m$.
The unique partition of $0$ is denoted by $\emptyset$. 
 The
set of partitions of $m$ is denoted by $\Lambda(m)$.
The \emph{Young diagram} $[\lambda]$ of a partition is the set
$\{(i,j)\in\mathbb{N}^2\mid 1\leq 
j\leq\lambda_i, 1\leq i\}$. 

If $\lambda$ is a partition of $m$, then a $\lambda$-\emph{tableau}
$\tt$ is a bijection $[\lambda]\to\{1,\ldots,m\}$,
$\lambda$ is called the \emph{shape} of $\tt$ and is denoted by 
$\shape(\tt)$. Tableaux are
often depicted 
as an array of boxes, one box for each element of $[\lambda]$ at the
corresponding position (such that $(1,1)$ is in the upper left corner) and we
write the number corresponding to the position into the box. 
Figure~\ref{figure:tableau}
shows a tableau of shape $(4,3,2)$. 
\begin{figure}[h!]
  \centering
  \epsfbox{figures/youngtableaux.0}
  \caption{A $(4,3,2)$-tableau}\label{figure:tableau}
\end{figure}

A tableau is called \emph{row-standard} if the entries in each row are
increasing from left to right, it is called \emph{column-standard} if
the entries in each column are increasing downwards and  
\emph{standard} if it is both row- and column-standard. Let $\tab(\lambda)$ be the set of $\lambda$-tableaux and 
$\Std(\lambda)$ be the set of standard $\lambda$-tableaux.
Let $\tt^\lambda$ be the $\lambda$-tableau where the
numbers from $1$ to $m$ are written into the boxes row by row. Then
$\tt^\lambda$ is a standard $\lambda$-tableau (see
Figure~\ref{figure:initialtableau}).  

\begin{figure}[h!]
  \centering
  \epsfbox{figures/youngtableaux.1}
  \caption{$\tt^\lambda$ with
    $\lambda=(4,3,2)$}\label{figure:initialtableau} 
\end{figure}

The symmetric group $\mathfrak{S}_m$ acts from the right on the set of
$\lambda$-tableaux by place permutation. If
$\tt$ is a $\lambda$-tableau, let
$d(\tt)\in\mathfrak{S}_m$ be the unique element such that
$\tt.d(\tt)=\tt^\lambda $. 
Let $\mathfrak{S}_\lambda$ be the row-stabilizer of
$\tt^\lambda$, so $\mathfrak{S}_\lambda\cong
\mathfrak{S}_{\lambda_1}\times  \mathfrak{S}_{\lambda_2}\times 
 \cdots$. If $w\in \mathfrak{S}_m$,  let
 $\sgn{w}$ be the sign of the permutation $w$ and $w^*=w^{-1}$. Then $*$
 can be extended to an anti-automorphism of $R\mathfrak{S}_m$.

For a partition $\lambda$ let
$y_\lambda=\sum_{w\in\mathfrak{S}_\lambda}\sgn{w}w$. Furthermore, if
$\ss$ and $\tt$ are $\lambda$-tableaux, let
$m^\lambda_{\ss,\tt}=d(\ss)^*y_\lambda
d(\tt)$. Then we have
\begin{thm}[\cite{murphy}]\label{thm:murphy}
  The group algebra $R\mathfrak{S}_m$ of the symmetric group is a
  cellular algebra with cellular basis
  \[
  \{m^\lambda_{\ss,\tt}\mid
  \lambda\in\Lambda(m),\ss,\tt\in\Std(\lambda)\} .
  \]
  In the notation of Definition~\ref{defn:cellular_algebra},
  $(\Lambda,\leq)$   is the set of partitions $(\Lambda(m),\unrhd)$
  ordered  by
  the dominance order. 
  $M(\lambda)$ is the set $\Std(\lambda)$ of standard
  $\lambda$-tableaux and the anti-automorphism $*$ is defined above.  
\end{thm}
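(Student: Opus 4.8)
The plan is to verify directly that the Murphy basis $\{m^\lambda_{\ss,\tt}\}$ satisfies the conditions C1 and C2 of Definition~\ref{defn:cellular_algebra}, with $\Lambda = \Lambda(m)$ ordered by the dominance order $\unrhd$, with $M(\lambda) = \Std(\lambda)$, and with $*$ the anti-automorphism sending $w \mapsto w^{-1}$. The first step is linear independence: one shows that the $m^\lambda_{\ss,\tt}$ for $\lambda \vdash m$ and $\ss,\tt \in \Std(\lambda)$ are an $R$-basis of $R\mathfrak{S}_m$. Since $\sum_{\lambda \vdash m} |\Std(\lambda)|^2 = m! = \dim_R R\mathfrak{S}_m$ by the Robinson--Schensted correspondence (or the hook length formula and its consequences), it suffices to prove they span, or equivalently that they are linearly independent. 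The standard route (following Murphy and the exposition in Mathas's book) is to introduce the submodules: for a partition $\lambda$, let $M^\lambda = y_\lambda R\mathfrak{S}_m$ be the (signed) permutation module and exhibit a filtration; but the cleanest self-contained argument is to show that the transition matrix between the $m^\lambda_{\ss,\tt}$ and the group basis $\{w : w\in\mathfrak{S}_m\}$, suitably ordered, is unitriangular after a change of basis, using the combinatorics of how $d(\ss)^* y_\lambda d(\tt)$ expands.

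**The anti-involution condition C1** is immediate: since $(y_\lambda)^* = y_\lambda$ (as $\mathfrak{S}_\lambda$ is a group and $\sgn{}$ is multiplicative, $\sgn{w^{-1}} = \sgn{w}$), we get
\[
  (m^\lambda_{\ss,\tt})^* = (d(\ss)^* y_\lambda d(\tt))^* = d(\tt)^* y_\lambda^* d(\ss) = d(\tt)^* y_\lambda d(\ss) = m^\lambda_{\tt,\ss},
\]
so $*$ permutes the basis as required. The substantive work is condition C2: for $a = w \in \mathfrak{S}_m$ a group element (it suffices to check on these since they span $R\mathfrak{S}_m$ and $A(<\lambda)$ is a two-sided ideal consideration reduces to a right-module statement) we must show
\[
  m^\lambda_{\ss,\tt}\, w \equiv \sum_{\tt' \in \Std(\lambda)} r_w(\tt,\tt')\, m^\lambda_{\ss,\tt'} \pmod{R\mathfrak{S}_m(<\lambda)},
\]
with the coefficients $r_w(\tt,\tt')$ independent of $\ss$. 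The key combinatorial input is a straightening-type lemma: the product $y_\lambda d(\tt) w$, expressed in terms of $y_\lambda d(\uu)$ for various tableaux $\uu$, picks up only $\uu$ of shape $\lambda$ or shapes $\mu \ntrianglerighteq$-related, and the $\lambda$-part can be rewritten using standard tableaux modulo lower terms — this is essentially the Garnir relations / the fact that $y_\lambda R\mathfrak{S}_m$ has a basis indexed by standard tableaux modulo the span of terms attached to partitions not dominating $\lambda$.

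**The main obstacle** is precisely this straightening step — controlling which partitions $\mu$ appear when one multiplies $y_\lambda$ (or $y_\lambda d(\tt)$) on the right by an arbitrary permutation, and showing the "error terms" all live in $R\mathfrak{S}_m(<\lambda)$ with coefficients not depending on the left tableau $\ss$. The mechanism is: $y_\lambda w$ is a signed sum over the coset $\mathfrak{S}_\lambda w$, and one analyzes it via the dominance order on the row-content of tableaux, using that $y_\lambda m_\mu^* \ne 0$ forces $\lambda \unrhd \mu'$ (conjugate) or a similar incidence condition, together with Garnir relations to express non-standard $y_\lambda d(\uu)$ in terms of standard ones plus strictly-lower terms. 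Since the independence of $r_w(\tt,\tt')$ from $\ss$ comes for free once the rewriting happens entirely on the right of $y_\lambda$ (the factor $d(\ss)^*$ is a passive left multiplier and $R\mathfrak{S}_m(<\lambda)$ is a left ideal too), the proof assembles once the straightening lemma is in hand. I would cite or reprove Murphy's straightening lemma as the technical heart, then deduce C1, C2, and the basis property in a few lines each.
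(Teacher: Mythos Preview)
Your proposal is correct and matches the paper's own treatment. The paper does not give a self-contained proof of this theorem; it cites Murphy and adds only the brief remark that the proof ``relies on the fact that the $m^\lambda_{\ss,\tt}$ (with $\ss,\tt$ arbitrary $\lambda$-tableaux) clearly span $R\mathfrak{S}_m$'', after which straightening rewrites the non-standard ones in terms of more dominant ones --- precisely the mechanism you describe, with the cardinality count, the verification of C1, and the Garnir/straightening argument for C2 filling in the details the paper leaves to the reference.
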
 

Note that  the elements $m^\lambda_{\ss,\tt}$ are
defined for arbitrary $\lambda$-tableaux which are not necessarily
standard. The proof of Theorem~\ref{thm:murphy} relies on the fact
that the $m^\lambda_{\ss,\tt}$ (with
$\ss,\tt$ arbitrary $\lambda$-tableaux) clearly span
$R\mathfrak{S}_m$. Thus one has to show that if $\ss$ or
$\tt$ are not standard, then
$m^\lambda_{\ss,\tt}$ can be written as a linear
combination of such elements involving tableaux which are `dominant'
with respect to some appropriate ordering.

It is easy to see and we will use this below, that
if $\tt$ and $\tt'$ are $\lambda$-tableaux
such that the set of entries of corresponding rows coincide, then
$m^\lambda_{\ss,\tt}=\pm
m^\lambda_{\ss,\tt'}$, the same holds for tableaux on the left side.
In particular, each 
$m^\lambda_{\ss,\tt}$ is a linear combination of 
$m^\lambda_{\ss',\tt'}$ with
$\ss'$ and $\tt'$ row-standard.

Let $\tt$ be a row-standard $\lambda$-tableau for a composition
$\lambda$ of $m$. If $1\leq i\leq m$, let
$\tt\downarrow i$ be the tableau obtained by restricting the
corresponding bijection $   \{1,\ldots,m\}\leftrightarrow[\lambda]$ to
$\{1,\ldots i\}$ . So $\shape(\tt\downarrow i)$ is a composition of
$i$. If $\ss$ is a $\mu$-tableau for some composition $\mu$ of $m$, we
define $\tt\unrhd\ss$ if and only if $\shape(\tt\downarrow
i)\unrhd\shape(\ss\downarrow i)$ for all $i=1,\ldots,m$. Then we have
\begin{prop}[\cite{murphy}]\label{prop:murphy}
  Let $\ss$ and $\tt$ be row-standard  $\lambda$-tableaux for a
  partition $\lambda$ 
  of $m$ such that
  $\ss$ is not standard. Then  $m_{\ss,\tt}^\lambda$ is congruent
  modulo $R\mathfrak{S}_n(\rhd\lambda)$ to a
  linear combination of $m_{\ss',\tt}^\lambda$
  with $\ss'\rhd\ss$. A similar statement holds if $\tt$ is not standard.

  Moreover, if   $\ss$ and $\tt$ are row-standard  $\lambda$-tableaux,
  then $m_{\ss,\tt}^\lambda$ is a linear combination of
  basis elements $m_{\ss',\tt'}^\mu$ with $\ss',\tt'$ standard, 
  $\ss'\unrhd\ss$ and $\tt'\unrhd\tt$. 
\end{prop}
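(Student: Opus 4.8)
The plan is to prove both halves by the standard "garnir-type" straightening argument, working in the ambient algebra $R\mathfrak{S}_m$ and exploiting that $m^\lambda_{\ss,\tt} = d(\ss)^* y_\lambda d(\tt)$ where $y_\lambda$ is the signed row-symmetrizer of $\tt^\lambda$. First I would recall the key anti-symmetry identity already flagged in the text: if $\ss$ and $\ss'$ are $\lambda$-tableaux whose corresponding rows have the same entry sets, then $m^\lambda_{\ss,\tt} = \pm m^\lambda_{\ss',\tt}$, so without loss of generality I may assume $\ss$ and $\tt$ are row-standard throughout. The heart of the first assertion is then a single "basic move": suppose $\ss$ is row-standard but not standard, so there is a column violation, i.e. some entry $a$ lying in row $i+1$ immediately below an entry $b$ in row $i$ with $a < b$. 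I would then apply the Garnir element associated to this pair of (overlapping) row segments: writing the coset decomposition of the relevant Young subgroup, one gets a relation expressing $y_\lambda \cdot (\text{permutation})$ as a signed sum of $y_\lambda$ times permutations that move the offending small entry up, modulo terms supported on larger shapes. Translating through $d(\ss)^*$, this exhibits $m^\lambda_{\ss,\tt}$ as a linear combination of $m^\lambda_{\ss',\tt}$ with $\ss' \rhd \ss$ in the dominance order on row-standard tableaux, plus a correction in $R\mathfrak{S}_m(\rhd\lambda)$. Here the dominance claim $\ss' \rhd \ss$ is verified by checking $\shape(\ss'\downarrow i) \unrhd \shape(\ss\downarrow i)$ for each $i$: moving an entry from a lower row to a higher row can only increase partial column sums of the restricted shapes, and strictly increases at least one, which is exactly the content of $\rhd$.

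Next I would run the obvious induction. The dominance order $\rhd$ on row-standard $\lambda$-tableaux of a fixed shape is a finite poset, and $\tt^\lambda$ is its unique maximal element and is standard; more importantly, any row-standard tableau that is maximal in $\rhd$ among those sharing its "content profile" must already be standard (otherwise the basic move produces something strictly larger). So by descending induction on $\ss$ in this poset—with the base case being those $\ss$ that are already standard, where there is nothing to prove—the first statement follows: each $m^\lambda_{\ss,\tt}$ with $\ss$ non-standard is congruent modulo $R\mathfrak{S}_m(\rhd\lambda)$ to a combination of $m^\lambda_{\ss',\tt}$ with $\ss'\rhd\ss$ and, after iterating, with $\ss'$ standard. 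The statement for $\tt$ is identical after applying the anti-involution $*$, which sends $m^\lambda_{\ss,\tt}$ to $m^\lambda_{\tt,\ss}$ and preserves $R\mathfrak{S}_m(\rhd\lambda)$.

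For the "moreover" clause I would simply combine the two one-sided results: starting from row-standard $\ss,\tt$, first straighten on the left to write $m^\lambda_{\ss,\tt} \equiv \sum c_{\ss'} m^\lambda_{\ss',\tt} \pmod{R\mathfrak{S}_m(\rhd\lambda)}$ with each $\ss'$ standard and $\ss'\unrhd\ss$, then straighten each resulting $m^\lambda_{\ss',\tt}$ on the right to get $\tt'$ standard with $\tt'\unrhd\tt$; the error terms live in $R\mathfrak{S}_m(\rhd\lambda)$, which by Murphy's theorem (Theorem~\ref{thm:murphy}) is spanned by basis elements $m^\mu_{\ss',\tt'}$ with $\mu\rhd\lambda$ and $\ss',\tt'$ standard, so the whole thing is a linear combination of cellular basis elements of the claimed form. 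The main obstacle—and the only place real work is hidden—is the basic straightening move: making precise which Garnir relation to use when $\ss$ is row-standard but fails column-standardness (the row segments involved need not be full rows, only the overlapping tails that straddle the bad column), and checking carefully that all tableaux appearing on the right are strictly larger in $\rhd$ while all genuinely new shapes that arise are strictly larger in $\unrhd$. This is exactly the content of Murphy's original argument, so I would present it as a lemma and reference \cite{murphy} for the combinatorial bookkeeping rather than reproducing it in full.
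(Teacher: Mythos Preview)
The paper does not supply its own proof of this proposition; it is simply quoted from \cite{murphy}. Your sketch of the first assertion via Garnir relations and induction on the dominance order of row-standard tableaux is exactly Murphy's argument, and is correct.

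There is, however, a genuine gap in your treatment of the ``moreover'' clause. You write that the error terms lie in $R\mathfrak{S}_m(\rhd\lambda)$, invoke Theorem~\ref{thm:murphy} to say this ideal is spanned by $m^\mu_{\ss',\tt'}$ with $\mu\rhd\lambda$ and $\ss',\tt'$ standard, and conclude that these are of the ``claimed form''. But the claimed form requires $\ss'\unrhd\ss$ and $\tt'\unrhd\tt$, and the condition $\mu\rhd\lambda$ alone does \emph{not} force an arbitrary standard $\mu$-tableau to dominate a given row-standard $\lambda$-tableau. (For instance, with $\lambda=(2,2)$, $\mu=(3,1)$, $\ss=\tt^\lambda$, and $\ss'$ the $\mu$-tableau with rows $\{1,3,4\}$ and $\{2\}$, one has $\shape(\ss'\downarrow 2)=(1,1)\not\unrhd(2)=\shape(\ss\downarrow 2)$.) So your shortcut through the cell ideal loses exactly the information you need.

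The fix is what Murphy actually does: run an outer induction on $\lambda$ in the dominance order. At each Garnir step you do not merely record that the error lands in $R\mathfrak{S}_m(\rhd\lambda)$; you keep track of the specific row-standard $\mu$-tableaux that the relation produces, verify directly that these dominate the original $\ss$ and $\tt$, and then apply the inductive hypothesis (the ``moreover'' statement already established for $\mu\rhd\lambda$) to straighten them further. Your final paragraph gestures toward this bookkeeping, but the argument you wrote in the ``moreover'' paragraph itself is not salvageable as stated.
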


We will construct a weakly cellular basis of $B_{r,s}(x)$ such that if
$x$ is specialized to $n$, then the annihilator
$\mathrm{ann}_{B_{r,s}(n)}(V^{\otimes r}\otimes {V^*}^{\otimes s})$ of
the walled Brauer algebra on the mixed tensor space has a basis
consisting of a subset of this weakly cellular basis. In particular,
$B_{r,s}(n)/\mathrm{ann}_{B_{r,s}(n)}(V^{\otimes r}\otimes
{V^*}^{\otimes s})$ is again a weakly cellular algebra.




\section{The rank of the annihilator}\label{section:combinatorics}
In this section, we will describe a combinatorial index set for a
basis  of the annihilator of the
walled Brauer algebra
on mixed tensor space. The next two propositions which hold also in the quantized case show that
the annihilator is in fact $R$-free with $R$-free complement 
and the rank does not depend on
$R$.  
\begin{prop}[\cite{haerterich}]\label{prop:haerterich}
  The annihilator in the group algebra  $R\mathfrak{S}_m$ of the
  symmetric group on
  $V^{\otimes m}$ is $R$-free with basis  
  \[
  \{m^\lambda_{\mathfrak{s},\mathfrak{t}}\mid
  \lambda\in\Lambda(m),\lambda_1>
  n,\mathfrak{s},\mathfrak{t}\in\Std(\lambda)\}. 
  \]
\end{prop}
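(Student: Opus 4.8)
The plan is to identify the annihilator of $R\mathfrak{S}_m$ on $V^{\otimes m}$ with a cell ideal determined by the cellular structure from Theorem~\ref{thm:murphy}, and then invoke the general theory of cellular algebras. First I would recall that for a cellular algebra $A$ with poset $(\Lambda,\leq)$, and for a subset $\Gamma\subseteq\Lambda$ which is a coideal (downward-closed complement, i.e. $\mu\leq\lambda$ and $\lambda\in\Gamma$ imply $\mu\in\Gamma$ — or the appropriate variant for the chosen convention), the span of $\{C^\lambda_{\ss,\tt}\mid\lambda\in\Gamma\}$ is a two-sided ideal, and the quotient is again cellular with poset $\Lambda\setminus\Gamma$. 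Here the candidate ideal is $I=\operatorname{span}_R\{m^\lambda_{\ss,\tt}\mid\lambda_1>n,\ \ss,\tt\in\Std(\lambda)\}$, corresponding to $\Gamma=\{\lambda\vdash m\mid\lambda_1>n\}$; one checks this is closed under the dominance order in the direction needed, since $\lambda\unrhd\mu$ forces $\mu_1\geq\lambda_1$ when... wait — actually $\lambda\unrhd\mu$ gives $\lambda_1\geq\mu_1$, so $\Gamma$ is an \emph{order ideal} (downward closed) and thus the complement indexes the quotient; in the Graham--Lehrer convention the span over an order ideal is itself an ideal, so $I$ is a two-sided ideal of $R\mathfrak{S}_m$.

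The heart of the matter is then to show $I=\operatorname{ann}_{R\mathfrak{S}_m}(V^{\otimes m})$. One inclusion is the classical fact that $V^{\otimes m}$, as a module, is annihilated exactly by the cell ideal corresponding to partitions with more than $n$ rows — equivalently with first column longer than $n$; but here the relevant statement is in terms of $\lambda_1>n$, i.e. partitions whose first \emph{row} is too long, which reflects the particular normalization in which the cell modules $S^\lambda$ attached to Murphy's basis are the \emph{dual} Specht modules (or a conjugated version). The cleanest route is: the cell module $S^\lambda$ (with the $y_\lambda$ normalization used above) is nonzero in the quotient $R\mathfrak{S}_m/I$ precisely when $\lambda_1\leq n$, and the Schur--Weyl bimodule decomposition over a field of characteristic zero, or the explicit $R$-free argument of \cite{haerterich}, identifies the image of $R\mathfrak{S}_m$ in $\operatorname{End}_R(V^{\otimes m})$ with exactly this quotient. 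Concretely, I would show (i) $I$ annihilates $V^{\otimes m}$ by exhibiting each generator $m^\lambda_{\ss,\tt}$ with $\lambda_1>n$ as killing every basis vector $v_{\mathbf i}$: the element $y_\lambda$ antisymmetrizes over the columns of the (column-reading of the) initial tableau — more precisely over a set of $\lambda_1$ positions lying in distinct rows — and since $\lambda_1>n$ the pigeonhole principle forces a repeated index among those positions, so the alternating sum vanishes; hence $m^\lambda_{\ss,\tt}=d(\ss)^*y_\lambda d(\tt)$ kills $V^{\otimes m}$; and (ii) the reverse inclusion $\operatorname{ann}\subseteq I$, equivalently that the quotient $R\mathfrak{S}_m/I$ acts faithfully, which is where I would cite \cite{haerterich} directly (the proposition is attributed there) or reduce to it: over $\ZZ$ it suffices to check faithfulness after base change to $\mathbb{Q}$, where semisimplicity and the classical Schur--Weyl count of dimensions ($\dim_{\mathbb{Q}}\operatorname{End}_{GL_n}(V^{\otimes m})=\sum_{\lambda\vdash m,\,\ell(\lambda)\leq n}(\dim S^\lambda)^2$, matching $\dim_{\mathbb{Q}}R\mathfrak{S}_m/I$ via the conjugation $\lambda\leftrightarrow\lambda'$) close the argument, and then $R$-freeness of both $I$ and its complement lets one conclude for arbitrary $R$.

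The main obstacle is the bookkeeping around which partitions index the annihilator: depending on whether one uses $y_\lambda$ (sign-symmetrized, giving a column-alternating picture) or $x_\lambda$ (trivial-symmetrized), the cutoff is $\lambda_1>n$ versus $\ell(\lambda)>n$, and one must be careful that the ordering convention on $\Lambda(m)$ (dominance $\unrhd$ as in Theorem~\ref{thm:murphy}) makes $\{\lambda\mid\lambda_1>n\}$ an order ideal in the sense required for the span to be a two-sided ideal. Once that is pinned down, the vanishing argument in (i) is the elementary pigeonhole computation sketched above, and (ii) is essentially a citation plus a dimension count; the $R$-freeness and independence of $R$ follow because the cellular basis is an $R$-basis defined over $\ZZ$, so the ideal $I$ and the complementary span $\{m^\lambda_{\ss,\tt}\mid\lambda_1\leq n\}$ are each spanned by part of a $\ZZ$-basis and hence remain $R$-free of the stated rank after any base change.
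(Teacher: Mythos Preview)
The paper does not prove this proposition; it is stated as a background result with citation to \cite{haerterich} and no argument is given. So there is nothing to compare against --- you are supplying a proof where the paper offers only a reference.

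Your overall strategy is correct, but two points need fixing. In step (i) your description of $y_\lambda$ is garbled: in this paper $y_\lambda=\sum_{w\in\mathfrak{S}_\lambda}\sgn{w}w$ with $\mathfrak{S}_\lambda$ the \emph{row} stabilizer of $\tt^\lambda$, so $y_\lambda$ antisymmetrizes \emph{within} each row, not ``over the columns'' or ``over $\lambda_1$ positions lying in distinct rows''. The right statement is that $y_\lambda$ has $y_{(\lambda_1)}$ as a factor, antisymmetrizing tensor positions $1,\ldots,\lambda_1$; when $\lambda_1>n$ pigeonhole forces a repeat among $i_1,\ldots,i_{\lambda_1}$ and hence $v_{\mathbf i}y_\lambda=0$, so $v_{\mathbf i}m^\lambda_{\ss,\tt}=(v_{\mathbf i}d(\ss)^{-1})y_\lambda d(\tt)=0$. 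Your conclusion survives with the corrected description.

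Step (ii) has a genuine gap if you rely on the $\mathbb Q$-reduction alone. Injectivity of $\ZZ\mathfrak{S}_m/I_{\ZZ}\to\operatorname{End}_{\ZZ}(V^{\otimes m})$ does follow from injectivity over $\mathbb Q$ (the source is torsion-free), but injectivity after an arbitrary base change $-\otimes_{\ZZ}R$ does \emph{not}: for that you need the cokernel of the $\ZZ$-map to be $\ZZ$-free, equivalently that the image is a direct summand of $\operatorname{End}_{\ZZ}(V^{\otimes m})$. This is exactly the nontrivial content of \cite{haerterich}, established there by producing an explicit dual system. Since you already offer the citation as one branch of your argument, your plan is salvageable --- but the sentence ``$R$-freeness of both $I$ and its complement lets one conclude for arbitrary $R$'' is not a valid substitute.
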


\begin{remark}\label{remark:action_of_the_walled_Brauer_algebra}
  In particular if $n'>n$, then $y_{(n')}$ acts as zero and hence each
  diagram with a box containing $y_{(n')}$ acts as zero. 
\end{remark}

\begin{prop}[\cite{dipperdotystoll1}]\label{prop:dipperdotystoll}
  There is an $R$-isomorphism between $R\mathfrak{S}_{r+s}$ and the
  walled Brauer algebra $B_{r,s}(n)$, that maps the annihilator in
  $R\mathfrak{S}_{r+s}$ on the tensor space bijectively to the
  annihilator in $B_{r,s}(n)$ on the mixed tensor space. In particular
  the annihilator in the walled Brauer algebra is as well $R$-free
  with an $R$-free complement. 
\end{prop}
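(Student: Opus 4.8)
The plan is to produce the isomorphism by a single ``flip'' of walled diagrams and then to see that it preserves annihilators by a bare comparison of matrix coefficients; the stated $R$-free structure of the annihilator will then be inherited from Proposition~\ref{prop:haerterich}. First I would define the $R$-module map $\Phi\colon R\mathfrak{S}_{r+s}\to B_{r,s}(n)$ on the diagram basis. Given $\pi\in\mathfrak{S}_{r+s}$, draw its Brauer diagram in $B_{r+s}(x)$ (no horizontal edges, top vertex $k$ joined to bottom vertex $\pi(k)$), and flip the last $s$ columns upside down: fix every vertex in positions $1,\ldots,r$ of each row, and for $r<k\le r+s$ interchange the $k$-th top vertex with the $k$-th bottom vertex. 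A short case check on the four types of edge (both endpoints $\le r$; a left top vertex to a right bottom vertex; a right top vertex to a left bottom vertex; both endpoints $>r$) shows that each edge is sent either to a vertical edge that does not cross the wall or to a horizontal edge that does, and that the orientation condition for the type $((\downarrow^r,\uparrow^s),(\downarrow^r,\uparrow^s))$ is met; hence $\varphi(\pi)$ is a walled Brauer diagram. The same flip applied to any walled Brauer diagram turns its horizontal edges (which must cross the wall) into vertical edges and keeps its vertical edges vertical, so it yields a permutation diagram; thus $\varphi$ is a bijection between $\mathfrak{S}_{r+s}$ and the set of walled Brauer diagrams, and $\Phi$ is an isomorphism of $R$-modules (defined generically over $\mathbb{Z}[x]$ and then specialised at $x=n$).

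Next I would compare the two actions. Under the identification $v_i^*\leftrightarrow v_i$ the mixed tensor space $V^{\otimes r}\otimes{V^*}^{\otimes s}$ becomes $V^{\otimes(r+s)}$ and the action of $B_{r,s}(n)$ is exactly the restriction to the subalgebra $B_{r,s}(n)\subseteq B_{r+s}(n)$ of the Brauer action; in particular the matrix coefficient of a walled diagram $d$ at a position $(\mathbf i,\mathbf j)$, $\mathbf i,\mathbf j\in I(n,r+s)$, is $1$ exactly when the two endpoints of every edge of $d$ carry equal labels, and $0$ otherwise, which is the very same rule governing the permutation diagrams inside $R\mathfrak{S}_{r+s}\subseteq B_{r+s}(n)$. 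Reading the labels through the flip gives the key identity
\[
  \bigl[\varphi(\pi)\bigr]_{\mathbf i,\mathbf j}=\bigl[\pi\bigr]_{\mathbf i',\mathbf j'},\qquad
  \mathbf i'=(i_1,\ldots,i_r,j_{r+1},\ldots,j_{r+s}),\quad \mathbf j'=(j_1,\ldots,j_r,i_{r+1},\ldots,i_{r+s}),
\]
valid for every $\pi\in\mathfrak{S}_{r+s}$: the flip of diagrams corresponds on tensor space to the fixed involutive relabelling $\alpha\colon(\mathbf i,\mathbf j)\mapsto(\mathbf i',\mathbf j')$ of pairs of multi-indices, independently of $\pi$. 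Since $\alpha$ is a bijection of $I(n,r+s)^2$, a linear combination $b=\sum_\pi c_\pi\pi$ acts as $0$ on $V^{\otimes(r+s)}$ if and only if $\Phi(b)=\sum_\pi c_\pi\varphi(\pi)$ does; as the $B_{r,s}(n)$-action on the mixed tensor space and the $R\mathfrak{S}_{r+s}$-action on $V^{\otimes(r+s)}$ are both restrictions of the Brauer action on $V^{\otimes(r+s)}$, this says precisely that $\Phi$ maps $\mathrm{ann}_{R\mathfrak{S}_{r+s}}(V^{\otimes(r+s)})$ bijectively onto $\mathrm{ann}_{B_{r,s}(n)}(V^{\otimes r}\otimes{V^*}^{\otimes s})$.

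The ``in particular'' is then immediate. By Proposition~\ref{prop:haerterich} the source $\mathrm{ann}_{R\mathfrak{S}_{r+s}}(V^{\otimes(r+s)})$ is $R$-free with basis the Murphy elements $m^\lambda_{\ss,\tt}$ having $\lambda_1>n$, and, since the full Murphy set is an $R$-basis of $R\mathfrak{S}_{r+s}$, the span of the remaining Murphy elements (those with $\lambda_1\le n$) is an $R$-free complement. Applying the module isomorphism $\Phi$ transports this splitting to
$B_{r,s}(n)=\mathrm{ann}_{B_{r,s}(n)}(V^{\otimes r}\otimes{V^*}^{\otimes s})\oplus\Phi(\langle m^\lambda_{\ss,\tt}:\lambda_1\le n\rangle)$, with both summands $R$-free, and the rank of the annihilator equal to $\sum_{\lambda\vdash r+s,\ \lambda_1>n}(\#\Std(\lambda))^2$, which does not depend on $R$.

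I expect the only delicate points to be the bookkeeping in the first two steps: checking carefully that flipping the last $s$ columns really does land in, and surject onto, walled diagrams with the correct orientation, and pinning down the relabelling $\alpha$ so that the matrix-coefficient identity holds on the nose simultaneously for all $\pi$. Everything else is formal; in particular no appeal to Schur--Weyl duality or to the structure of $U$ is needed here, because the mixed-tensor action is literally the Brauer action restricted to a subalgebra, so both annihilators already sit inside the single module $\mathrm{ann}_{B_{r+s}(n)}(V^{\otimes(r+s)})$ and the flip bijection matches them up.
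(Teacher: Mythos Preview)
Your argument is correct. The paper itself does not prove this proposition; it is quoted as a result from \cite{dipperdotystoll1}, and what you have written is essentially the proof given there: the ``flip'' of the last $s$ columns is exactly the bijection between permutation diagrams and walled Brauer diagrams used in that reference, and the matrix-coefficient identity you wrote down (swapping the last $s$ entries of $\mathbf i$ and $\mathbf j$) is the mechanism by which the two annihilators are matched. Your case analysis and the verification that $\alpha$ is a fixed bijection independent of $\pi$ are both accurate, and the deduction of $R$-freeness from Proposition~\ref{prop:haerterich} is exactly how the paper uses this result in the next paragraph.
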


Thus, a basis of the annihilator of the walled Brauer algebra on mixed
tensor space is indexed by pairs of standard tableaux for partitions
$\lambda$ with $\lambda_1>n$. Anyhow, this combinatorial 
index set is related to the
group algebra of the symmetric group and its action on tensor space,
but not to the walled Brauer algebra. 
Propositions~\ref{prop:haerterich} and \ref{prop:dipperdotystoll} show
that it is enough to find a combinatorial description of the rank of the
annihilator for 
$R=\mathbb{C}$. 

The rank of the annihilator can be computed once we know the dimension
of the image of the representation  $B_{r,s}(n)\to
\mathrm{End}_{\mathbb{C}}(V^{\otimes r}\otimes 
{V^*}^{\otimes s})$ which is equal to
$\mathrm{End}_{\mathbb{C}GL_n(\mathbb{C})}(V^{\otimes r}\otimes
{V^*}^{\otimes s})$ 
by
Theorem~\ref{theorem:Schur-Weyl-duality}.
To determine this dimension one can decompose the semisimple
$\mathbb{C}GL_n(\mathbb{C})$-module $V^{\otimes r}\otimes
{V^*}^{\otimes s}$ into simple submodules 
as in \cite{stembridge}.

\begin{defn}
  Let $\lambda\vdash (r-k)$ and $\mu\vdash (s-k)$ for some nonnegative
  integer $k$.
  A \emph{path} $Y$ to $(\lambda,\mu)$ is a sequence
  $Y=(Y_0=(\emptyset,\emptyset),Y_1,Y_2,\ldots,Y_{r+s}=(\lambda,\mu))$
  where $Y_i=(\lambda^{(i)},\mu^{(i)})$ 
  for $0\leq i\leq r+s$ is a pair of partitions such that
  \begin{itemize}
  \item For $i\leq r$, $\mu^{(i)}=\emptyset$ is the empty partition
    and $[\lambda^{(i)}]$ is obtained from $[\lambda^{(i-1)}]$ by adding one box.
  \item For $r+1\leq i\leq r+s$,  either $\lambda^{(i)}=\lambda^{(i-1)}$ and
    $[\mu^{(i)}]$ is obtained from $[\mu^{(i-1)}]$ by adding one box
    or $\mu^{(i)}=\mu^{(i-1)}$ and
    $[\lambda^{(i)}]$ is obtained from $[\lambda^{(i-1)}]$ by removing one box.    
  \end{itemize}
\end{defn}
If $Y$ is a path, we write
$Y=(\emptyset\to Y_1\to Y_2 \to\dots\to   Y_{r+s})$. Given $Y$, let
$\max(Y)$ be the maximum of the set
$\{\lambda^{(i)}_1+\mu^{(i)}_1\mid i=1,\ldots,r+s\}$. We obviously have
$\max(Y)\leq r+s$.  To the  tuples $(\lambda,\mu)$ of partitions
 with $\lambda_1+\mu_1\leq n$ 
 one can define  pairwise non-isomorphic  simple rational
$\mathbb{C}GL_n(\mathbb{C})$-modules $V_{\lambda,\mu}$  such that
\begin{prop}[\cite{stembridge}]\label{prop:stembridge}
  We have:
  \[
  V^{\otimes r}\otimes {V^*}^{\otimes s}\cong
  \bigoplus_{(\lambda,\mu)}
  V_{\lambda,\mu}^{\oplus n_{\lambda,\mu}}
  \]
  where $n_{\lambda,\mu}$ is  
  equal to the number of
  paths $Y$ to $(\lambda,\mu)$ with $\max(Y)\leq n$. 
\end{prop}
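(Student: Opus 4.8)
\emph{Sketch of proof (plan).} The statement concerns only the decomposition of the $\mathbb{C}GL_n(\mathbb{C})$-module $V^{\otimes r}\otimes{V^*}^{\otimes s}$, and the plan is to prove it by a Pieri-type induction in which the tensor factors are peeled off one at a time, in the order $V,\dots,V,V^*,\dots,V^*$. Put $W_0=\mathbb{C}=V_{\emptyset,\emptyset}$, $W_i=V^{\otimes i}$ for $0\le i\le r$, and $W_{r+j}=V^{\otimes r}\otimes{V^*}^{\otimes j}$ for $0\le j\le s$, so that $W_i=W_{i-1}\otimes V$ for $i\le r$, $W_i=W_{i-1}\otimes V^*$ for $i>r$, and $W_{r+s}$ is the mixed tensor space. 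I would show by induction on $i$ that
\[
W_i\;\cong\;\bigoplus_{(\alpha,\beta)} V_{\alpha,\beta}^{\oplus N_i(\alpha,\beta)},
\]
the sum running over pairs with $\alpha_1+\beta_1\le n$, where $N_i(\alpha,\beta)$ is the number of length-$i$ initial segments $\emptyset\to Y_1\to\cdots\to Y_i=(\alpha,\beta)$ of paths (as in the definition above, truncated after $i$ steps) all of whose vertices $Y_k=(\lambda^{(k)},\mu^{(k)})$ satisfy $\lambda^{(k)}_1+\mu^{(k)}_1\le n$. For $i=r+s$ this inequality on all vertices is exactly $\max(Y)\le n$, so the case $i=r+s$ is the proposition, while $i=0$ is trivial.

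The engine of the induction is the branching rule for tensoring a mixed simple $\mathbb{C}GL_n$-module with the natural module or its dual:
\[
V_{\alpha,\beta}\otimes V\;\cong\;\Bigl(\bigoplus_{\alpha^{+}}V_{\alpha^{+},\beta}\Bigr)\oplus\Bigl(\bigoplus_{\beta^{-}}V_{\alpha,\beta^{-}}\Bigr),\qquad
V_{\alpha,\beta}\otimes V^{*}\;\cong\;\Bigl(\bigoplus_{\beta^{+}}V_{\alpha,\beta^{+}}\Bigr)\oplus\Bigl(\bigoplus_{\alpha^{-}}V_{\alpha^{-},\beta}\Bigr),
\]
where $\alpha^{+},\beta^{+}$ run over the partitions obtained by adding one box \emph{subject to} the new pair still satisfying the inequality $\le n$, and $\alpha^{-},\beta^{-}$ run over the partitions obtained by removing one box. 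Since $V$ and $V^{*}$ are minuscule $GL_n$-modules, each such tensor product is multiplicity free and, by the Pieri rule for minuscule modules, its constituents are precisely the simple modules of highest weight $\nu\pm\varepsilon_j$ (with $\nu$ the highest weight of $V_{\alpha,\beta}$) that are dominant; translating dominance into the combinatorics of the (transposed) Young diagrams of $(\alpha,\beta)$ turns this condition into exactly ``add a box to $\alpha$, keeping the width bound, or remove a box from $\beta$'', and the symmetric statement, with the roles of $\alpha$ and $\beta$ exchanged, for $V^{*}$.

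Granting the branching rule, the inductive step is bookkeeping. For $i\le r$ one has $\beta=\emptyset$ throughout, so only the $\alpha^{+}$-summands occur and $V_{\alpha,\emptyset}\otimes V=\bigoplus_{\alpha^{+}:\,\alpha^{+}_1\le n}V_{\alpha^{+},\emptyset}$; comparing coefficients gives $N_i(\alpha,\emptyset)=\sum_{\alpha'}N_{i-1}(\alpha',\emptyset)$, the sum over $\alpha'$ obtained from $\alpha$ by deleting a box, which is exactly the count of truncated width-$\le n$ paths (extending such a path to end at $\alpha$ keeps every vertex admissible iff $\alpha_1\le n$). For $i>r$, tensoring with $V^{*}$ and collecting like terms expresses the multiplicity of $V_{\alpha,\beta}$ in $W_i$ as the sum, over the pairs $(\alpha',\beta')$ from which $(\alpha,\beta)$ is reached by one admissible step --- grow $\beta$ without breaking the width bound, or shrink $\alpha$ --- of the multiplicity of $V_{\alpha',\beta'}$ in $W_{i-1}$, which is precisely the recursion defining $N_i$. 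Since the base cases agree, $N_{r+s}(\lambda,\mu)=n_{\lambda,\mu}$.

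The crux, and the step I expect to be the main obstacle, is the branching rule --- specifically the claim that when the bound $\alpha^{+}_1+\beta_1\le n$ (resp.\ $\alpha_1+\beta^{+}_1\le n$) would be violated, the offending summand simply disappears rather than being replaced by a correction term coming from a ``modification rule''. The minuscule Pieri rule settles this cleanly: a pair $(\alpha,\beta)$ with $\alpha_1+\beta_1>n$ does not correspond to any dominant $GL_n$-weight supported on $n$ coordinates, so no such term is ever produced, and conversely every dominant weight that is produced has valid-pair form. Alternatively, one can bypass the module-theoretic induction and instead prove the corresponding identity of $GL_n$-characters --- the Laurent polynomial $(x_1+\dots+x_n)^r(x_1^{-1}+\dots+x_n^{-1})^s$ expanded in the rational characters of the $V_{\lambda,\mu}$ --- by a lattice-path/reflection argument that enforces the bound ``width $\le n$'' on the paths; this is essentially the approach of \cite{stembridge}, from which the proposition may also simply be quoted once the two conventions are matched.
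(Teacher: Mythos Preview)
Your sketch is correct, and in particular you have identified the only nontrivial point: the minuscule Pieri rule for $V_{\alpha,\beta}\otimes V^{\pm 1}$ really is clean, with the ``forbidden'' summands simply absent because the corresponding weights are not dominant for $GL_n$. The induction then reduces to bookkeeping, exactly as you describe.

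As for comparison with the paper: there is nothing to compare against. The paper does not prove this proposition; it is stated with attribution to \cite{stembridge}, and the only accompanying remark is that the partitions here are the transposes of Stembridge's and that the result can alternatively be derived from the Littlewood--Richardson rule. Your final paragraph already anticipates this --- the inductive/lattice-path argument you outline is essentially Stembridge's, and quoting the result (after matching conventions) is precisely what the paper does.
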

 
Note that our partitions are the transposed partitions of Stembridge's
partitions in \cite{stembridge}. Proposition~\ref{prop:stembridge}
can be as well obtained using the 
Littlewood-Richardson rule 
(\cite{littlewoodrichardson},\cite{fultonharris}).   
 Together with the previous results, we immediately obtain
\begin{cor}\label{cor:rank}
  \begin{enumerate}
  \item The algebra $\mathrm{End}_{U}(V^{\otimes r}\otimes
    {V^*}^{\otimes s})$ is $R$-free. Its rank is equal to the number of
    pairs $(Y,Z)$ where $Y,Z$ are paths to $(\lambda,\mu)$ for
    partitions $\lambda\vdash (r-k)$ and $\mu\vdash (s-k)$ 
    with $\max(Y),\max(Z)\leq n$. 
  \item The rank of the walled Brauer algebra  $B_{r,s}(x)$ is equal
    to the number of pairs $(Y,Z)$ where $Y,Z$ are paths to
    $(\lambda,\mu)$ with $\lambda\vdash (r-k)$ and $\mu\vdash (s-k)$.
  \item The rank of the annihilator in the walled Brauer algebra on
    mixed tensor space is equal to the number of   pairs $(Y,Z)$ of
    paths to $(\lambda,\mu)$ such that $\max(Y)>n$ or $\max(Z)>n$. 
  \end{enumerate}
\end{cor}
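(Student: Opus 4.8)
The plan is to combine the three previous propositions into a single counting argument, working over $R=\mathbb{C}$ where everything is explicit and then transferring $R$-freeness statements back by the base-change results. First I would establish part (1). By Theorem~\ref{theorem:Schur-Weyl-duality}, the image of $B_{r,s}(n)$ in $\mathrm{End}_{\mathbb{C}}(V^{\otimes r}\otimes{V^*}^{\otimes s})$ is exactly $\mathrm{End}_{\mathbb{C}GL_n(\mathbb{C})}(V^{\otimes r}\otimes{V^*}^{\otimes s})$, which (by the remark following Theorem~\ref{theorem:Schur-Weyl-duality}) equals $\mathrm{End}_U(V^{\otimes r}\otimes{V^*}^{\otimes s})$ when $R=\mathbb{C}$. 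Using the decomposition of Proposition~\ref{prop:stembridge} into pairwise non-isomorphic simple modules $V_{\lambda,\mu}^{\oplus n_{\lambda,\mu}}$, Schur's lemma gives
\[
\dim_{\mathbb{C}}\mathrm{End}_{\mathbb{C}GL_n(\mathbb{C})}(V^{\otimes r}\otimes{V^*}^{\otimes s})=\sum_{(\lambda,\mu)}n_{\lambda,\mu}^2,
\]
and since $n_{\lambda,\mu}$ counts paths $Y$ to $(\lambda,\mu)$ with $\max(Y)\leq n$, the square $n_{\lambda,\mu}^2$ counts ordered pairs $(Y,Z)$ of such paths to the same $(\lambda,\mu)$; summing over all $(\lambda,\mu)$ (equivalently over all $k$ and all $\lambda\vdash(r-k)$, $\mu\vdash(s-k)$) gives precisely the claimed number. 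For the $R$-freeness over a general commutative ring $R$: by Proposition~\ref{prop:dipperdotystoll} the annihilator in $B_{r,s}(n)$ is $R$-free with $R$-free complement, hence $\mathrm{End}_U(V^{\otimes r}\otimes{V^*}^{\otimes s})$, being isomorphic to that complement, is $R$-free; and its rank is independent of $R$, so it equals the $\mathbb{C}$-dimension just computed.

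Part (2) is the easiest: the walled Brauer diagrams form an $R$-basis of $B_{r,s}(x)$, so its rank is the number of walled Brauer diagrams of degree $(r,s)$. I would exhibit a bijection between walled Brauer diagrams and pairs $(Y,Z)$ of (unrestricted) paths to a common endpoint $(\lambda,\mu)$ with $\lambda\vdash(r-k)$, $\mu\vdash(s-k)$. One clean way: the number of such pairs of paths equals $\sum_{(\lambda,\mu)}N_{\lambda,\mu}^2$ where $N_{\lambda,\mu}$ is the total number of paths to $(\lambda,\mu)$, and $\sum N_{\lambda,\mu}^2$ is by definition $\dim_{\mathbb{C}(x)}\mathrm{End}_{B_{r,s}(x)}(\text{regular module})$ in the generic semisimple case — equivalently $\dim B_{r,s}(x)$ over $\mathbb{C}(x)$, since the generic walled Brauer algebra is semisimple with simple modules indexed by the $(\lambda,\mu)$ and $N_{\lambda,\mu}$ is the dimension of the simple labelled by $(\lambda,\mu)$ (the paths being exactly the standard "up-down" tableaux of that shape). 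As the rank of $B_{r,s}(x)$ is the number of walled Brauer diagrams, independent of $R$ and $x$, this identifies it with the claimed count. Alternatively, one can give a direct combinatorial bijection between walled Brauer diagrams and such pairs of paths, generalizing the Robinson–Schensted-type correspondence for the Brauer algebra.

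Part (3) is then immediate by subtraction. By Proposition~\ref{prop:dipperdotystoll} the annihilator is $R$-free and its rank plus the rank of $\mathrm{End}_U(V^{\otimes r}\otimes{V^*}^{\otimes s})$ equals the rank of $B_{r,s}(n)$; by part (2) this total is the number of all pairs $(Y,Z)$ of paths to a common endpoint, and by part (1) the second summand is the number of such pairs with $\max(Y)\leq n$ and $\max(Z)\leq n$. Hence the rank of the annihilator is the number of pairs with $\max(Y)>n$ or $\max(Z)>n$, as claimed.

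I expect the main obstacle to be part (2): one must be sure that the count of walled Brauer diagrams really equals $\sum_{(\lambda,\mu)}N_{\lambda,\mu}^2$, i.e.\ that the (up-down) paths defined here are in bijection with the standard objects indexing a basis of the generic walled Brauer algebra. This can be justified either by invoking the known generic semisimplicity and branching (Bratteli-diagram) description of $B_{r,s}(x)$, or by constructing the bijection with diagrams explicitly; the latter is more self-contained but requires care in handling the horizontal arcs (the parameter $k$) and the removal steps in the $V^*$-part of the path. Everything else is bookkeeping with Schur's lemma and the already-established $R$-freeness statements.
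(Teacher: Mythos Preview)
Your argument for parts (1) and (3) matches the paper's. The difference is in part (2). The paper avoids both the generic semisimplicity argument and any Robinson--Schensted-type bijection with a much shorter observation: since $\max(Y)\leq r+s$ for every path $Y$, choose any $n\geq r+s$. For such $n$ the walled Brauer algebra acts faithfully on mixed tensor space, so $\operatorname{rank} B_{r,s}(x)=\operatorname{rank}\mathrm{End}_U(V^{\otimes r}\otimes{V^*}^{\otimes s})$, and by part (1) this equals the number of pairs $(Y,Z)$ with $\max(Y),\max(Z)\leq n$; but that constraint is now vacuous, giving the count of \emph{all} pairs of paths. This dissolves what you flagged as the main obstacle. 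Your route via generic semisimplicity and branching is also valid, but it imports more machinery (and the direct bijection you mention would need to be constructed), whereas the paper's trick reduces (2) to a one-line specialization of (1).
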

\begin{proof}
  If $M$ is a semisimple module over some $\mathbb{C}$-algebra $A$,
  and $M\cong \oplus_i S_i^{\otimes n_i}$ where the $S_i$ are pairwise
  non-isomorphic simple modules, then we have
  $\dim_{\mathbb{C}}\mathrm{End}_A(M)=\sum_i n_i^2$, thus
   the first part follows. Since the rank of the walled Brauer algebra
   does not depend on $R$ and the parameter $x$ and since the walled
   Brauer algebra acts faithfully for $n\geq r+s$ we obtain the second
   part. The rest follows. 
\end{proof}


\section{A basis of the walled Brauer algebra}\label{section:basis}
In Theorem~\ref{thm:murphy}, a basis of the symmetric group algebra was
indexed by pairs of tableaux. These tableaux can be identified with
paths of partitions. The other way around we will introduce tableaux
replacing paths to $(\lambda,\mu)$ in this
section.

Further, we define a basis of the walled Brauer algebra
indexed by these tableaux. In order
to obtain this we introduce a
generalization of Young diagrams and tableaux.

We call subsets $[\rho]$ of $\mathbb{N}^2$ which can be written as
$[\nu]\setminus[\lambda]$ for compositions $\nu$, $\lambda$ \emph{skew
  diagrams}. Further we extend the term tableau to every
injective map from a skew diagram $[\rho]$ into the integers and call
such a tableau a $\rho$-tableau (here $\rho$ is just a notation and
has not the meaning of a sequence of certain numbers). As before these
(skew-)tableaux get depicted by an array of boxes and the
corresponding numbers in these boxes, see Figure~\ref{figure:skewdiagram}.  
Given a tableau $\tt$ we call the image of $\tt$ the filling of the
tableau and denote it by $\fil(\tt)$.

In an even wider context, $\rho$-tableaux can be defined as bijections
from $[\rho]$ into a linearly ordered set. Then the terms row
standard, standard and the dominance order on tableaux can be defined
accordingly. 

In particular, we consider maps to subsets of  $\mathbb{Z}$ ordered by $\leq$ or
$\geq$. If the subset of  $\mathbb{Z}$ is ordered by $\leq$, then row standard,
etc.~is defined as before. If the subset of  $\mathbb{Z}$ is ordered by $\geq$ then
row standard means that in each row the entries are decreasing from
left to right. To avoid confusion we will call such tableaux anti-row
standard and similarly a tableau is  anti-standard if it is anti-row
standard and the entries in each column are decreasing from top to
bottom. 

The dominance order $\trianglerighteq^{\op} $ on the set of
anti-row standard tableaux then can be 
explained as follows: If $\ss$ and $\tt$ are anti-row standard tableaux
then  $\tt \trianglerighteq^{\op} \ss$ if
and only if $\shape(\tt\uparrow i) \trianglerighteq^{\op}
\shape(\ss\uparrow i)$, where $\tt\uparrow i$ denotes the restriction
of the tableau to all integers greater or equal to $i$.

If $\tt$ is a tableau, let $d(\tt)$ be the permutation such that in
$\tt. d(\tt)$ the entries increase from left to right row by row and
let $d^{\op}(\tt)$ be the permutation such that in $\tt. d^{\op}(\tt)$
the entries decrease from left to right row by row.

\begin{figure}[h!]
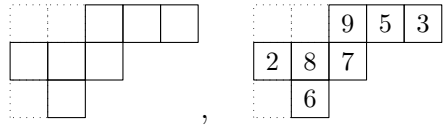

  \centering
 \epsfbox{figures/youngtableaux.3}, \quad\epsfbox{figures/youngtableaux.2}
  \caption{A \emph{skew diagram} $[\rho]$ and a $\rho$-tableau}
  \label{figure:skewdiagram} 
\end{figure}

Recall, that $r$ and $s$ are two fixed non-negative integers.
Let $(\tt, \uu, \vv)$ be a triple of tableaux with the following properties:
\begin{itemize}
\item $\tt$ is a row-standard $\nu$-tableau 
  with entries $\{1,2,\dots,r\}$ where $\nu$ is  a partition of $r$,
\item $\uu$ is a row anti-standard $\rho$-tableau with $k$ boxes, such that
  $[\rho]\subseteq[\nu]$; $[\nu]\setminus [\rho]=[\lambda]$ where $\lambda$ is a composition of
  $r-k$, 
\item $\vv$ is a row-standard $\mu$-tableau with $\mu$ a partition of
  $s-k$, 
\item the entries of $\uu$ and $\vv$ are $\{ 1,2,\dots,s \}$.
\end{itemize}
We call such a triple $(\tt,\uu,\vv)$ a row-standard triple, we call
$(\lambda, \mu)$ its shape and $r+s$ its length.  
We call $(\tt, \uu, \vv)$ standard if  additionally:
\begin{itemize}
\item $\tt$ is standard,
\item $\lambda$ is a partition of $r-k$ and $\uu$ is anti-standard,
\item $\vv$ is standard.
\end{itemize}
%
%
We denote the set of all pairs $(\lambda, \mu)$ of partitions of $r-k$
and $s-k$ resp.~for some non-negative $k$ by $\Lambda(r,s)$ and the set of all
standard triples of shape $(\lambda, \mu)$ by $M(\lambda, \mu)$. 


\begin{prop}
  Let $(\lambda, \mu)$ be a pair of partitions in
  $\Lambda(r,s)$. There is a bijection 
  between the set $M(\lambda, \mu)$ and the set of paths to $(\lambda,
  \mu)$.  
\end{prop}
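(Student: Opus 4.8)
The plan is to construct explicit mutually inverse maps between the standard triples of shape $(\lambda,\mu)$ and the paths to $(\lambda,\mu)$, reading off the ``history'' of how boxes are added and removed. A path $Y=(\emptyset\to Y_1\to\cdots\to Y_{r+s})$ records, for each step $i$, a box: for $i\le r$ a box added to the first partition, and for $r+1\le i\le r+s$ either a box added to the second partition or a box removed from the first partition. The idea is to label that box with the time-stamp at which it is touched, distributing these time-stamps into the three tableaux $\tt,\uu,\vv$ as follows. The first $r$ steps build up a standard tableau $\tt'$ of shape $\nu:=\lambda^{(r)}$ by putting $i$ into the box added at step $i$. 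In steps $r+1,\dots,r+s$ we receive $s$ further time-stamps $1,\dots,s$ (re-indexed by subtracting $r$); a step that adds a box to $\mu^{(i)}$ contributes that time-stamp to $\vv$, placed in the box that was added, yielding a standard $\mu$-tableau $\vv$; a step that removes a box from $\lambda^{(i-1)}$ contributes that time-stamp to $\uu$, placed in the box that was removed. The set of removed boxes is exactly $[\nu]\setminus[\lambda]=[\rho]$, with $k$ of them, and the restriction of $\tt'$ to $[\lambda]$ gives the row-standard (indeed standard) $\nu$-tableau $\tt$ with entries $\{1,\dots,r\}$ required in the definition.

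The key point to verify is that these constituent tableaux are standard (resp.\ anti-standard) precisely because of the path conditions. For $\tt'$: at each of the first $r$ steps we add a box to a Young diagram, so the box added at step $i$ has all boxes above and to its left already present, hence filled with smaller time-stamps; thus $\tt'$ is standard, and so is its restriction $\tt$ to $[\lambda]$. For $\vv$: the same ``always add an addable box'' argument over steps $r+1,\dots,r+s$ shows $\vv$ is standard of shape $\mu$. For $\uu$: a box can be removed from $\lambda^{(i-1)}$ only if it is a removable (corner) box of $\lambda^{(i-1)}$, i.e.\ the boxes below it and to its right in $[\nu]$ have already been removed at earlier times, hence carry smaller time-stamps. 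This is exactly the condition that $\uu$, read with the reverse order on $\mathbb Z$, is anti-standard: entries decrease along rows and down columns. Conversely, given a standard triple $(\tt,\uu,\vv)$ we reconstruct the path: the entries of $\tt$ (together with the entries of $\uu$, merged by comparing... ) — more precisely, $\nu=[\lambda]\cup[\rho]=\shape(\tt)\cup\shape(\uu)$ is a partition, and $\tt$ extends uniquely to a standard $\nu$-tableau $\tt'$ by the anti-standardness of $\uu$ forcing the relative order; reading $\tt'$ by increasing entries recovers $Y_1,\dots,Y_r$, and then interleaving the box-removals dictated by $\uu$ (in decreasing order of its entries) with the box-additions dictated by $\vv$ (in increasing order) according to the common index set $\{1,\dots,s\}$ recovers $Y_{r+1},\dots,Y_{r+s}$.

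I would then check the two composites are the identity: starting from a path, extracting the triple, and rebuilding the path returns the original sequence because at every step the box touched is determined by the time-stamp and the partition reached so far; and starting from a standard triple, building the path and re-extracting returns the original triple because the time-stamps were never altered, only sorted and redistributed. One should also confirm well-definedness in the boundary cases $k=0$ (then $[\rho]=\emptyset$, $\uu$ is the empty tableau, $\nu=\lambda$, and no removal steps occur) and $s=0$ (then $\vv$ and $\uu$ are empty and a path is just a standard $\lambda$-tableau of shape $\lambda\vdash r$, matching the classical bijection).

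The main obstacle is bookkeeping rather than conceptual: one must be careful that the index set $\{1,\dots,s\}$ is shared by $\uu$ and $\vv$ (each of the $s$ steps after time $r$ contributes to exactly one of them), so that the interleaving of additions to $\mu$ and removals from $\lambda$ is recovered unambiguously from the numerical values, and that ``$[\rho]\subseteq[\nu]$ with $[\nu]\setminus[\rho]=[\lambda]$'' is automatically a consequence of the construction (the removable-corner condition guarantees $[\lambda]$ is a genuine Young diagram). Making the correspondence between ``removable corner of $\lambda^{(i-1)}$'' and ``anti-standard position in $\uu$'' fully precise is the one place where a short lemma-style argument is needed; everything else is routine.
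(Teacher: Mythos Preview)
Your approach is the same as the paper's (which is very brief: it writes down the map from paths to triples exactly as you describe and then says ``obviously this is a $1$-$1$-correspondence''), and your verification that standardness of $\tt,\vv$ and anti-standardness of $\uu$ correspond precisely to the path conditions is correct in spirit. However, you have misread the definition of a standard triple, and this creates genuine confusion in your write-up.

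In the paper, $\tt$ is a $\nu$-tableau with \emph{all} of $\{1,\dots,r\}$ as entries, where $\nu=\lambda^{(r)}\vdash r$. Your $\tt'$ is already this $\tt$; there is no restriction to $[\lambda]$ involved. Your sentence ``the restriction of $\tt'$ to $[\lambda]$ gives the row-standard (indeed standard) $\nu$-tableau $\tt$ with entries $\{1,\dots,r\}$'' is internally inconsistent (a restriction to $[\lambda]$ would be a $\lambda$-tableau with $r-k$ entries) and does not match the paper's definition. The tableaux $\tt$ and $\uu$ occupy overlapping regions of $[\nu]$ (namely $\uu$ sits on the skew part $[\rho]=[\nu]\setminus[\lambda]$), but they carry entries from \emph{disjoint} alphabets: $\tt$ from $\{1,\dots,r\}$ and $\uu$ from $\{1,\dots,s\}$.

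This misreading propagates to your inverse map. You write that ``$\tt$ extends uniquely to a standard $\nu$-tableau $\tt'$ by the anti-standardness of $\uu$ forcing the relative order'', but this makes no sense: $\tt$ already has shape $\nu$, and the entries of $\uu$ live in $\{1,\dots,s\}$, so there is no merging of entries to perform. The correct inverse is simpler than what you wrote: for $1\le i\le r$ set $\lambda^{(i)}=\shape(\tt\downarrow i)$; for $1\le i\le s$, if $i$ lies in $\vv$ then add the corresponding box to $\mu$, and if $i$ lies in $\uu$ then remove the corresponding box from $\lambda$. Your argument that anti-standardness of $\uu$ guarantees the box to be removed is always a removable corner is correct (using that $[\lambda]$ is a genuine Young diagram, so the boxes east and south of a $\rho$-box are again $\rho$-boxes if they lie in $[\nu]$). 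Once you fix the role of $\tt$, the rest of your proposal goes through and matches the paper.
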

\begin{proof}
  To each path $Y=(\emptyset\to Y_1\to Y_2 \to\dots\to
  Y_{r+s})$ we relate an element of $M(\lambda, \mu)$ in the following way: 
  \begin{itemize}
  \item
    Let $\nu=\lambda^{(r)}$, $\lambda=\lambda^{(r+s)}$, 
    $\mu=\mu^{(r+s)}$ and $\rho$ be defined
    by $[\rho]=[\nu]\backslash[\lambda]$. 
  \item Let $\tt$, $\uu$ and $\vv$ be the $\nu$-,
    $\rho$- and
    $\mu$-tableaux respectively satisfying: 
    \begin{itemize} 
    \item For $1\leq i\leq r$ if $[\lambda^{(i)}]$ is obtained from
      $[\lambda^{(i-1)}]$ by adding 
      a box then $i$ is the entry of this box inside $\tt$. 
    \item For $1\leq i\leq s$ if $[\mu^{(r+i)}]$ is obtained from
      $[\mu^{(r+i-1)}]$ by adding
      a box  then $i$ is the entry of this box  inside $\vv$.  
    \item For $1\leq i\leq s$ if $[\lambda^{(r+i)}]$ is obtained from
      $[\lambda^{(r+i-1)}]$ by deleting 
      a box  then $i$ is the entry of this box inside $\uu$. 
      
    \end{itemize}
  \end{itemize}
  Obviously this is an $1$-$1$-correspondence.
\end{proof}

\begin{exmp}
  Let $Y$ be the path to 
 $\tuple{youngtableaux.6}{youngtableaux.9}{.5}$ 
with
\begin{align*}
Y&=\tuple{0}{0}{.5}\to \tuple{youngtableaux.4}{0}{.5}
\to \tuple{youngtableaux.6}{0}{.5}\to \tuple{youngtableaux.7}{0}{.5}
\to \tuple{youngtableaux.8}{0}{.5}\\
\to & \tuple{youngtableaux.7}{0}{.5}
\to \tuple{youngtableaux.7}{youngtableaux.4}{.5}
\to \tuple{youngtableaux.7}{youngtableaux.5}{.5}
\to \tuple{youngtableaux.6}{youngtableaux.5}{.5}
\to \tuple{youngtableaux.6}{youngtableaux.9}{.5}
\end{align*}
  \Yautoscale1 
  If we go through the procedure of the proof, 
  then  this path is related to the following standard triple $(\tt,\uu,\vv)$
  of tableaux
  \[
  (\tt,\uu,\vv) = 
  \triple{youngtableaux.10}{youngtableaux.12}{youngtableaux.11}{1}{2}.
  \]
\end{exmp}
For each row-standard triple of
tableaux $(\tt,\uu,\vv)$ we will 
define a linear combination of generalized diagrams. Note that in general,  these
linear combinations are not elements of the walled Brauer algebra
since they involve different numbers of vertices in the top and in the
bottom row. Eventually, we will use these linear combinations to
define a basis  for the walled Brauer algebra. 

We first define two additional
tableaux $\oo$ and $\ss$. The first tableau $\oo$ is the $\nu$-tableau
with the entries of $\uu$ in the same positions as in $\uu$. Into the
remaining boxes we fill in 
different integers  greater than $s$  such that these integers decrease 
in order from left to right along the rows. The particular choice of the
integers does not play a role.
The second tableau
$\ss$ is row standard of shape $(k,s-k)$, such that the entries of the
first row are the entries of $\uu$ and the entries of the second row
are the entries of $\vv$. 

Let  $m_{(\tt,\uu,\vv)}$ be the element  defined via the diagram in
figure \ref{mtuv}. Note that there are $k$ horizontal edges, $r-k$
down- and $s-k$ up-arrows in the top row and $r$ down- and $s$
up-arrows in the bottom row. 
\begin{figure}[h!]
  \centering
  \[ m_{(\tt,\uu,\vv)} =\ \wtangle{diagrams.1}{7cm} \]
  \caption{The element $m_{(\tt,\uu,\vv)}$}
  \label{mtuv}
\end{figure}

If the vertices in the bottom row of this diagram are labeled from
left to right by $1,\ldots,r$ and $1,\ldots,s$ respectively and the
$r-k$ vertices on the upper left are labeled by the additional entries
in $\oo$, decreasing from left to right, then the element
$m_{(\tt,\uu,\vv)}$ can be easily illustrated: the entries of $\tt$,
read row by row, indicate the ending  vertices in the bottom left
of the edges coming from
$y_\nu$. In the same way, the entries of $\oo$ indicate the starting
vertices of the edges ending in $y_\nu$ and the entries of $\vv$
indicate the starting vertices of the edges ending in $y_\mu$.

\begin{exmp}\label{exmtuv}
  Consider the standard triple
  \[
  (\tt,\uu,\vv) =
  \triple{youngtableaux.13}{youngtableaux.14}{youngtableaux.15}{1}{3}.
  \]
  Then $\nu=(4,3,2)$, $\lambda=(2,2,1)$ and $\mu=(3,1,1)$. 
  The additional tableaux are $\oo =
  \wtangle{youngtableaux.16}{2cm}$ and
  $\ss=\wtangle{youngtableaux.17}{2.5cm}$ 
  and
  we get the 
  diagram in Figure~\ref{diagtoexmtuv}. 

  \begin{figure}[h!]
    \centering
    \htangle{diagrams.2}{4cm}
    \caption{$m_{(\tt,\uu,\vv)}$ as in Example \ref{exmtuv}}\label{diagtoexmtuv}
  \end{figure}
\end{exmp}

For  $(\lambda,\mu) \in\Lambda(r,s)$ let $m_{\lambda,\mu} $ be  the element 
\[ m_{\lambda,\mu} := \wtangle{diagrams.3}{4cm}. \]
Let $\tau = (\lambda_1, 1^{(\nu_1-\lambda_1)},\lambda_2,
1^{(\nu_2-\lambda_2)},\dots)$ which is a composition of $r$, then $y_\nu =
y_\tau \cdot z$ where $z$ is an alternating sum of right coset
representatives of $\mathfrak S_\tau$ in $\mathfrak S_\nu$. Note that 
$d^{\op}(\oo)^*y_\tau=y_\lambda d^{\op}(\oo)^*$.
It follows that there is some diagram  $b_{(\tt,\uu,\vv)}$ such that
  $m_{(\tt,\uu,\vv)} =
m_{\lambda,\mu} \cdot b_{(\tt,\uu,\vv)}$. 

We define 
\begin{equation*}
m_{(\tt',\uu',\vv'),(\tt,\uu,\vv)}
:= b_{(\tt',\uu',\vv')}^\ast m_{\lambda,\mu} b_{(\tt,\uu,\vv)}
\end{equation*}
which is an element of $B_{r,s}(x)$. Note that this definition does
not depend on the  choice of $b_{(\tt,\uu,\vv)}$.

\begin{remark}
  Theorem~\ref{thm:murphy} and Proposition~\ref{prop:murphy} still hold
  if $\Std(\lambda)$ is replaced by the set of standard tableaux with
  entries in a fixed ordered set.
  In particular,  $\Std(\lambda)$ can be replaced
  by the set of anti-standard tableaux with a fixed filling. Then the
  basis elements have to be defined by $m^\lambda_{\ss,\tt}={d^{\op}(\ss)}^*y_\lambda
  d^{\op}(\tt)$. One can even allow different sets of
  $\lambda$-tableaux for $\ss$ and $\tt$. 
  
  Suppose now that $(\tt,\uu,\vv)$ and $(\tt',\uu',\vv')$ are
  row-standard triples of shape $(\lambda,\mu)$. 
  Let $\oo$ and $\ss$ be the
  tableaux defined
  above for the triple $(\tt,\uu,\vv)$ and similarly $\oo'$ and
  $\ss'$  for the  triple $(\tt',\uu',\vv')$.  Let
  $m_{\oo,\tt}^\nu={d^{\op}(\oo)}^*y_\nu d(\tt)$ and similarly  
  $m_{\oo',\tt'}^\nu$ and 
  $m_{\vv',\vv}^\mu$ be defined in
  the above sense. Choose $b_{\oo,\tt}^\nu$ and 
  $b_{\oo',\tt'}^{\nu'}$ such that $m_{\oo,\tt}^\nu=y_\lambda
  b_{\oo,\tt}^\nu$ and  $m_{\oo',\tt'}^{\nu'}=y_\lambda
  b_{\oo',\tt'}^{\nu'}$. Then  $m_{\tt',\oo'}^{\nu'}=
  \left({b_{\oo',\tt'}^{\nu'}}^*\right) y_\lambda$ and the element
  $m_{(\tt',\uu',\vv'),(\tt,\uu,\vv)}$
  is given in
  Figure~\ref{figure:basiselement}.

  \begin{figure}[h!]
    \[\wtangle{diagrams.6}{4.2cm}=\wtangle{diagrams.8}{4.2cm}
    =\wtangle{diagrams.5}{4.2cm}\]
    \caption{ $m_{(\tt',\uu',\vv'),(\tt,\uu,\vv)}$}
    \label{figure:basiselement}
  \end{figure} 
  
\end{remark}


We will show that some of these elements form a cellular basis. The partially
ordered set is $ \Lambda(r,s)$ with the following ordering: 
Let $(\lambda,\mu),(\lambda',\mu')\in
\Lambda(r,s)$. Then $\lambda\vdash (r-k)$,    $\mu\vdash (s-k)$,
$\lambda'\vdash (r-k')$ and     $\mu'\vdash (s-k')$ for some
$k$ and $k'$. We define
\begin{equation*}
  (\lambda',\mu')\unrhd (\lambda,\mu):\Leftrightarrow
  k'>k\text{ or }
  (k'=k,\lambda'\unrhd \lambda \text{ and }\mu'\unrhd\mu).
\end{equation*}

For
$(\lambda,\mu)\in\Lambda(r,s)$
let $\widetilde{m}_{\lambda,\mu}\in B_{r,s}(x)$ be
the element shown in Figure~\ref{figure:mlambdamu}. 
\begin{figure}[h!]
  \[\widetilde{m}_{\lambda,\mu}=\wtangle{diagrams.4}{6cm}\]
  \caption{$\widetilde{m}_{\lambda,\mu}$}\label{figure:mlambdamu}
\end{figure}  
This element coincides with $m_{\lambda,\mu}$ except for $k$
additional horizontal edges on the top and on the bottom. 

 Further,
let  $B^{\trianglerighteq(\lambda,\mu)}$ be the 
ideal of $B_{r,s}(x)$
generated by all $\widetilde{m}_{\lambda',\mu'}$ with 
$(\lambda',\mu')\unrhd (\lambda,\mu)$. We claim that as an $R$-module, 
$B^{\trianglerighteq(\lambda,\mu)}$ is
generated by all   $m_{(\tt',\uu',\vv'),(\tt,\uu,\vv)}$
where $(\tt',\uu',\vv')$ and
$(\tt,\uu,\vv)$ are  row-standard
triples of shape $(\lambda',\mu')$ with
$(\lambda',\mu')\unrhd(\lambda,\mu)$.
By definition, these elements are elements of $B^{\trianglerighteq(\lambda,\mu)}$. On
the other hand, each element of $B^{\trianglerighteq(\lambda,\mu)}$ is a linear
combination of $b_1\widetilde{m}_{\lambda',\mu'}b_2$ with
$(\lambda',\mu')\unrhd (\lambda,\mu)$ and $b_1,b_2$  walled Brauer
diagrams. By deleting cycles, such an element can be written as (possibly a multiple of)
$b_1'm_{\lambda',\mu'}b_2'$
where $b_1'$ and $b_2'$ are generalized diagrams. If the  diagram
$b_1'm_{\lambda',\mu'}b_2'$ has more horizontal edges
than $ \widetilde{m}_{\lambda',\mu'}$, say $k''$ edges, then $b_1'm_{\lambda',\mu'}b_2'$
can be written as  a linear combination of  $m_{(\tt',\uu',\vv'),(\tt,\uu,\vv)}$ with 
$(\tt',\uu',\vv')$ and $(\tt,\uu,\vv)$ row standard of shape
$((1^{r-k''}),(1^{s-k''}))\rhd(\lambda',\mu')$. 
If the number of
horizontal edges is the same in  $ \widetilde{m}_{\lambda',\mu'}$ and
$b_1\widetilde{m}_{\lambda',\mu'}b_2$, then 
$b_1'm_{\lambda',\mu'}b_2'=\pm m_{(\tt',\uu',\vv'),(\tt,\uu,\vv)}$ with 
$(\tt',\uu',\vv')$ and $(\tt,\uu,\vv)$ row standard 
of shape $(\lambda',\mu')$.

We call the elements of
$B^{\trianglerighteq(\lambda,\mu)}$ \emph{$(\lambda,\mu)$-elements}.
Note that $B^{\trianglerighteq((1^{r-k}),(1^{s-k}))}$ is the ideal of the walled
Brauer algebra generated by the walled Brauer diagrams with at least
$k$ horizontal edges on top and on bottom.

Suppose now that
$ (\tt,\uu,\vv)$ and  $(\tilde\tt,\tilde\uu,\tilde\vv)$  are
row-standard triples of shape  $(\lambda,\mu)$ and 
$(\tilde\lambda,\tilde\mu)$ respectively. Let $\oo$ be defined as
above and let $\tilde\oo$ be the corresponding tableau for
$(\tilde\tt,\tilde\uu,\tilde\vv)$ such that $\oo$ and $\tilde\oo$ have the
same additional entries. 
Then we define
$  (\tilde\tt,\tilde\uu,\tilde\vv)\unrhd (\tt,\uu,\vv)$ 
if and only  if
  \begin{itemize}
  \item $(\tilde\lambda,\tilde\mu)\rhd (\lambda,\mu)$ or
  \item $(\tilde\lambda,\tilde\mu)= (\lambda,\mu)$, $\tilde\tt\unrhd
    \tt$, 
    $\tilde\oo\unrhd^{\op} \oo$ and $\tilde\vv\unrhd\vv$ .
  \end{itemize}

\begin{lemma}\label{lemma:straightening}
  Let $(\tt',\uu',\vv')$ and
  $(\tt,\uu,\vv)$ be row-standard triples of shape
  $(\lambda,\mu)$ such that $(\tt,\uu,\vv)$ is
  not standard. Then $m_{(\tt',\uu',\vv'),(\tt,\uu,\vv)}$ is a linear
  combination of elements
  $m_{(\tt',\uu',\vv'),(\tilde\tt,\tilde\uu,\tilde\vv)}$ with $
  (\tilde\tt,\tilde\uu,\tilde\vv)\rhd (\tt,\uu,\vv)$ and of
  $(\lambda',\mu')$-elements such that $(\lambda',\mu')\rhd(\lambda,\mu)$.  
\end{lemma}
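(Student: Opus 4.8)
My plan is to reduce the claim to Murphy's straightening rule, Proposition~\ref{prop:murphy}, applied to one of the two Murphy-type blocks occurring in the diagram for $m_{(\tt',\uu',\vv'),(\tt,\uu,\vv)}$. As observed above, this element is the diagram of Figure~\ref{figure:basiselement}, and its dependence on the triple $(\tt,\uu,\vv)$ is carried entirely by the factor $m^\nu_{\oo,\tt}={d^{\op}(\oo)}^*y_\nu d(\tt)$ — recall that $\oo$ is determined by $\uu$ alone — and by the factor $m^\mu_{\vv',\vv}$; everything else ($\tt'$, $\oo'$, $\vv'$, the additional entries) stays fixed. So it will suffice to rewrite whichever of these two blocks is not ``standard'' and substitute the result back into the diagram.

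First I would record that, for a suitable (and, as observed above, immaterial) choice of the additional entries of $\oo$ — say decreasing along rows and down columns — the tableau $\oo$ is anti-standard if and only if $\uu$ is anti-standard and $\lambda$ is a partition: when $\lambda$ is a partition contained in $\nu$, the cells of $[\rho]=[\nu]\setminus[\lambda]$ occupy the bottom ends of the columns of $[\nu]$, so the (smaller) entries of $\uu$ lie below the additional ones. Hence $(\tt,\uu,\vv)$ fails to be standard precisely when at least one of $\tt$, $\oo$, $\vv$ is not standard resp.\ anti-standard, and I would pick one that fails and straighten the corresponding block by Proposition~\ref{prop:murphy} — in the one-sided version, allowing anti-standard tableaux with a fixed filling, licensed by the Remark above — keeping its partner fixed: if $\vv$ is not standard, $m^\mu_{\vv',\vv}\equiv\sum_{\tilde\vv\rhd\vv}c_{\tilde\vv}\,m^\mu_{\vv',\tilde\vv}$ modulo $R\mathfrak{S}_{s-k}(\rhd\mu)$; if $\tt$ is not standard, $m^\nu_{\oo,\tt}\equiv\sum_{\tilde\tt\rhd\tt}c_{\tilde\tt}\,m^\nu_{\oo,\tilde\tt}$ modulo $R\mathfrak{S}_r(\rhd\nu)$; if $\oo$ is not anti-standard, $m^\nu_{\oo,\tt}\equiv\sum_{\tilde\oo\rhd^{\op}\oo}c_{\tilde\oo}\,m^\nu_{\tilde\oo,\tt}$ modulo $R\mathfrak{S}_r(\rhd\nu)$, where — again since the additional entries are immaterial — each anti-standard $\tilde\oo$ that occurs is the tableau attached to a row-standard triple $(\tt,\tilde\uu,\vv)$. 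Substituting the displayed (non-error) terms back into Figure~\ref{figure:basiselement} produces linear combinations of elements $m_{(\tt',\uu',\vv'),(\tilde\tt,\tilde\uu,\tilde\vv)}$ in which exactly one of $\tt,\uu,\vv$ has changed; the inequalities $\tilde\tt\rhd\tt$, $\tilde\vv\rhd\vv$, $\tilde\oo\rhd^{\op}\oo$ supplied by Proposition~\ref{prop:murphy} are exactly those in the definition of $\rhd$ on triples, so — noting in the third case that $|\tilde\lambda|=r-k$, whence the shape either remains $(\lambda,\mu)$ or rises to some $(\tilde\lambda,\mu)\rhd(\lambda,\mu)$ — a short check with that definition gives $(\tilde\tt,\tilde\uu,\tilde\vv)\rhd(\tt,\uu,\vv)$.

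The hard part will be the error terms. Feeding a summand of $R\mathfrak{S}_{s-k}(\rhd\mu)$ back into the diagram replaces the box $y_\mu$ by a box $y_{\mu'}$ with $\mu'\vdash s-k$ and $\mu'\rhd\mu$; after deleting the resulting closed cycles this is, up to a scalar in $R$, a diagram $b_1\widetilde{m}_{\lambda,\mu'}b_2$ with $b_1,b_2$ walled Brauer diagrams, i.e.\ a $(\lambda,\mu')$-element, and $(\lambda,\mu')\rhd(\lambda,\mu)$. Feeding a summand of $R\mathfrak{S}_r(\rhd\nu)$ back in instead replaces $y_\nu$ by $y_{\nu'}$ with $\nu'\vdash r$ and $\nu'\rhd\nu$; proceeding as in the generation argument preceding the lemma — decomposing $y_{\nu'}=y_{\tau'}z'$ for a suitable composition $\tau'$ of $r$, pushing the $y_{\tau'}$-box past a neighbouring permutation diagram to split off a $y_{\lambda'}$-box for the partition $\lambda'$ underlying $\tau'$, and deleting cycles — one rewrites such a term as a linear combination of $(\lambda'',\mu'')$-elements with $(\lambda'',\mu'')\rhd(\lambda,\mu)$. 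The one genuinely combinatorial point — and the crux of the lemma — is that $\nu'\rhd\nu$ forces the relevant dominance $\lambda'\unrhd\lambda$ (so that one indeed lands in a strictly higher shape rather than in an incomparable one), and that this survives re-inserting the $k$ horizontal edges of the diagram; this is exactly the feature for which the order on $\Lambda(r,s)$ was designed. Since the three cases are exhaustive, this proves the lemma.
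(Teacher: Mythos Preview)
Your reduction to Murphy's straightening on the two blocks $m^\mu_{\vv',\vv}$ and $m^\nu_{\oo,\tt}$ is exactly the paper's approach, and the $\vv$-case is fine. The gap is in your treatment of the error terms coming from the $\nu$-block.

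You apply only the one-sided version of Proposition~\ref{prop:murphy} to $m^\nu_{\oo,\tt}$, pushing the residue into $R\mathfrak{S}_r(\rhd\nu)$, and then assert that substituting any such error back into the diagram yields a $(\lambda'',\mu'')$-element with $(\lambda'',\mu'')\rhd(\lambda,\mu)$, on the grounds that ``$\nu'\rhd\nu$ forces $\lambda'\unrhd\lambda$''. This implication is false. The $\lambda'$ you would extract by writing $y_{\nu'}=y_{\tau'}z'$ and sliding $y_{\tau'}$ upward is governed not by $\nu'$ but by how the $r-k$ upper strands distribute among the rows of the substituted left tableau~$\aa$; concretely, $\lambda'=\shape(\aa\uparrow s{+}1)$. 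For example, take $\nu=(3,2)$, $\lambda=(3)$ (so the three additional entries of $\oo$ fill row~$1$, the two $\uu$-entries fill row~$2$), and $\nu'=(4,1)\rhd\nu$. The anti-row-standard $\nu'$-tableau $\aa$ with one of the additional entries placed in row~$2$ has $\shape(\aa\uparrow s{+}1)=(2,1)$, which does not dominate $(3)$. So a generic element of $R\mathfrak{S}_r(\rhd\nu)$ can, after substitution, fail to land in $B^{\rhd(\lambda,\mu)}$, and your argument gives no reason why the particular error terms you produce should avoid such $\aa$.

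The paper closes this gap by invoking the \emph{second} clause of Proposition~\ref{prop:murphy}: expand $m^\nu_{\oo,\tt}$ fully as a linear combination of $m^{\tilde\nu}_{\tilde\oo,\tilde\tt}$ with $\tilde\oo$ anti-standard, $\tilde\tt$ standard, $\tilde\oo\unrhd^{\op}\oo$ and $\tilde\tt\unrhd\tt$. The point is that the dominance $\tilde\oo\unrhd^{\op}\oo$ is carried by \emph{every} term, including those with $\tilde\nu\neq\nu$; it yields $\shape(\tilde\oo\uparrow s{+}1)\unrhd\shape(\oo\uparrow s{+}1)=\lambda$. If strict, the substituted term lies in $B^{\rhd(\lambda,\mu)}$; if equal, the choice of additional entries forces $\tilde\oo\uparrow s{+}1=\oo\uparrow s{+}1$, and the term is precisely $m_{(\tt',\uu',\vv'),(\tilde\tt,\tilde\uu,\vv)}$ for a row-standard triple with $(\tilde\tt,\tilde\uu,\vv)\rhd(\tt,\uu,\vv)$. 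In short, the control you need lives on the $\tilde\oo$-side, not on $\nu'$, and only the two-sided straightening supplies it.
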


\begin{proof}
  Recall that $m_{(\tt',\uu',\vv'),(\tt,\uu,\vv)}$ can be
  depicted as in Figure~\ref{figure:basiselement}.
  Suppose first that $\vv$ is not standard.
  By Proposition~\ref{prop:murphy},  $m_{\vv',\vv}^\mu$ is a linear
  combination of 
 $m_{\tilde\vv',\tilde\vv}^{\tilde\mu}$ where either
  $\tilde\vv'=\vv'$ and   $\tilde\vv\rhd  \vv$ (and thus $\tilde\mu=\mu$)
  or $\tilde\mu\rhd \mu$. Replacing 
  $m_{\vv',\vv}^\mu$ in
  $m_{(\tt',\uu',\vv'),(\tt,\uu,\vv)}$ by this
  linear combination shows the result.

  Now suppose that $\tt$ is not standard or $\oo$  is
  not anti-standard. Again by  
  Proposition~\ref{prop:murphy}, $m_{\oo,\tt}^\nu$ is a
  linear combination of $m_{\tilde\oo,\tilde\tt}^{\tilde\nu}$ with
  $\tilde\oo$ anti-standard and $\tilde\tt$ standard,
  $\tilde\oo\unrhd^{\op} \oo$ and $\tilde\tt\unrhd\tt$ and 
  $m_{(\tt',\uu',\vv'),(\tt,\uu,\vv)}$ can be accordingly written as a
  linear combination. Note that $\lambda$ is by definition the shape of
  $\oo\uparrow s+1$.

  If $\shape(\tilde\oo\uparrow
  s+1)\rhd\shape(\oo\uparrow s+1)$, then the element obtained by replacing
  $m_{\oo,\tt}^\nu$  by  $m_{\tilde\oo,\tilde\tt}^{\tilde\nu}$ in 
  $m_{(\tt',\uu',\vv'),(\tt,\uu,\vv)}$ is a
  $(\tilde{\lambda},\mu)$-element where
  $\tilde{\lambda}=\shape(\tilde\oo\uparrow s+1)\rhd\lambda$.

  If $\shape(\tilde\oo\uparrow s+1)=\shape(\oo\uparrow s+1)=\lambda$
  then we
  have $\tilde\oo\uparrow s+1=\oo\uparrow s+1$, since $\oo\uparrow s+1$ is the
  maximal row anti-standard $\lambda$-tableau with this filling.
  But then the
  element obtained by replacing $m_{\oo,\tt}^\nu$ by  $m_{\tilde\oo,\tilde\tt}^{\tilde\nu}$ is 
  $m_{(\tt',\uu',\vv'),(\tilde\tt,\tilde\uu,\vv)}$ where $\tilde\uu$ is
  obtained by restriction of $\tilde\oo$.

\end{proof}
\begin{remark}\label{remark:straightening}
  The proof of Lemma~\ref{lemma:straightening} shows: if
  $m_{(\tt',\uu',\vv'),(\tilde\tt,\tilde\uu,\tilde\vv)}$ appears in
  the linear combination in Lemma~\ref{lemma:straightening} and
  $\tilde\ss$ is defined as above for
  $(\tilde\tt,\tilde\uu,\tilde\vv)$,
  then $\tilde\ss=\ss$. 

\end{remark}

\begin{thm}\label{thm:basis}
  The set 
  \[
  \{ m_{(\tt',\uu',\vv'),(\tt,\uu,\vv)}\mid
  (\tt',\uu',\vv'),(\tt,\uu,\vv)\in M(\lambda,\mu),(\lambda,\mu)\in
  \Lambda(r,s)\}\] 
  is a cellular basis of the walled Brauer algebra $B_{r,s}(x)$. The
  partial order on $\Lambda(r,s)$ is given by $\unrhd$.
\end{thm}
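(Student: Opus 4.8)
The plan is to verify, much as in the proof of Theorem~\ref{thm:murphy}, four statements in turn: the standard elements $m_{(\tt',\uu',\vv'),(\tt,\uu,\vv)}$ span $B_{r,s}(x)$; there are exactly $\operatorname{rank}_R B_{r,s}(x)$ of them, so they form an $R$-basis; this basis satisfies the involution axiom C1; and it satisfies C2.

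\emph{Spanning.} Applied to the $\unrhd$-minimal element $((1^r),(1^s))$ of $\Lambda(r,s)$ — for which $B^{\trianglerighteq((1^r),(1^s))}=B_{r,s}(x)$, since $\widetilde m_{(1^r),(1^s)}$ is the identity diagram — the discussion preceding Lemma~\ref{lemma:straightening} already gives that $B_{r,s}(x)$ is $R$-spanned by all $m_{(\tt',\uu',\vv'),(\tt,\uu,\vv)}$ with row-standard triples of a common shape. I would then rewrite each of these in terms of standard triples by a double induction: an outer induction on the shape $(\lambda,\mu)$ downward with respect to $\unrhd$, and, for fixed shape, an inner induction on the (well-founded) order $\unrhd$ on triples. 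In the inner step, when the right triple is not standard I apply Lemma~\ref{lemma:straightening} directly to replace $m_{(\tt',\uu',\vv'),(\tt,\uu,\vv)}$ by elements with strictly larger right triple; when the left triple is not standard I apply $*$ (using $m_{(\tt',\uu',\vv'),(\tt,\uu,\vv)}^*=m_{(\tt,\uu,\vv),(\tt',\uu',\vv')}$, established below, and that $B^{\rhd(\lambda,\mu)}:=\sum_{(\lambda',\mu')\rhd(\lambda,\mu)}B^{\trianglerighteq(\lambda',\mu')}$ is a $*$-stable ideal), straighten, and apply $*$ back. The $(\lambda',\mu')$-elements with $(\lambda',\mu')\rhd(\lambda,\mu)$ appearing in Lemma~\ref{lemma:straightening} all lie in $B^{\rhd(\lambda,\mu)}$, which is $R$-spanned by row-standard triples of shape $\rhd(\lambda,\mu)$ and so is covered by the outer induction. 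After finitely many steps only standard elements remain.

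\emph{Basis and C1.} Counting: by the Proposition identifying $M(\lambda,\mu)$ with the set of paths to $(\lambda,\mu)$, the number of standard elements is $\sum_{(\lambda,\mu)\in\Lambda(r,s)}|M(\lambda,\mu)|^2=\sum_{(\lambda,\mu)}\big(\#\{\text{paths to }(\lambda,\mu)\}\big)^2$, and this equals $N:=\operatorname{rank}_R B_{r,s}(x)$ by Corollary~\ref{cor:rank}(2). A generating set of a finitely generated free module over a commutative ring whose cardinality equals the rank is automatically a basis — the induced surjection $R^N\twoheadrightarrow B_{r,s}(x)$ is then injective, being a surjective endomorphism of a finitely generated module — so the standard elements form an $R$-basis. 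For C1, take $*$ to be the restriction to $B_{r,s}(x)$ of the anti-involution of $B_m(x)$ reflecting a diagram in a horizontal line; on $R\mathfrak S_m$ it is $w\mapsto w^{-1}$, hence it fixes $y_\nu$ and $y_\mu$, and $m_{\lambda,\mu}^*=m_{\lambda,\mu}$ since the defining diagram of $m_{\lambda,\mu}$ is symmetric. Then $m_{(\tt',\uu',\vv'),(\tt,\uu,\vv)}^*=(b_{(\tt',\uu',\vv')}^*m_{\lambda,\mu}b_{(\tt,\uu,\vv)})^*=b_{(\tt,\uu,\vv)}^*m_{\lambda,\mu}b_{(\tt',\uu',\vv')}=m_{(\tt,\uu,\vv),(\tt',\uu',\vv')}$.

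\emph{Axiom C2, and the main obstacle.} Fix $(\lambda,\mu)\in\Lambda(r,s)$, $(\tt',\uu',\vv'),(\tt,\uu,\vv)\in M(\lambda,\mu)$ and $a\in B_{r,s}(x)$; here $B^{\rhd(\lambda,\mu)}$ is the span of the standard elements of shape $\rhd(\lambda,\mu)$ and plays the role of $A(<(\lambda,\mu))$. Using $m_{(\tt',\uu',\vv'),(\tt,\uu,\vv)}=b_{(\tt',\uu',\vv')}^*\,m_{(\tt,\uu,\vv)}$ with $m_{(\tt,\uu,\vv)}=m_{\lambda,\mu}b_{(\tt,\uu,\vv)}$, one gets $m_{(\tt',\uu',\vv'),(\tt,\uu,\vv)}\,a=b_{(\tt',\uu',\vv')}^*\big(m_{\lambda,\mu}\,b_{(\tt,\uu,\vv)}a\big)$, where the bracketed factor is independent of $(\tt',\uu',\vv')$. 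Expanding $b_{(\tt,\uu,\vv)}a$ into walled Brauer diagrams and deleting cycles, $m_{\lambda,\mu}\,b_{(\tt,\uu,\vv)}a$ becomes an $R$-combination of elements $\pm m_{\lambda,\mu}b'$ with $b'$ a generalized diagram; by the diagram analysis preceding Lemma~\ref{lemma:straightening}, those with more than $k$ horizontal edges lie in the generalized analogue of $B^{\rhd(\lambda,\mu)}$, while those with exactly $k$ horizontal edges are $\pm m_{(\hat\tt,\hat\uu,\hat\vv)}$ for row-standard triples $(\hat\tt,\hat\uu,\hat\vv)$ of shape $(\lambda,\mu)$ — crucially, with left datum the one intrinsic to $m_{\lambda,\mu}$. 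Left-multiplying by $b_{(\tt',\uu',\vv')}^*$ and then using Lemma~\ref{lemma:straightening} to straighten each right triple $(\hat\tt,\hat\uu,\hat\vv)$ (while leaving $(\tt',\uu',\vv')$ fixed) gives
\[
m_{(\tt',\uu',\vv'),(\tt,\uu,\vv)}\,a\ \equiv\ \sum_{(\tilde\tt,\tilde\uu,\tilde\vv)\in M(\lambda,\mu)} r_a\big((\tt,\uu,\vv),(\tilde\tt,\tilde\uu,\tilde\vv)\big)\, m_{(\tt',\uu',\vv'),(\tilde\tt,\tilde\uu,\tilde\vv)}\pmod{B^{\rhd(\lambda,\mu)}},
\]
with coefficients $r_a$ not depending on $(\tt',\uu',\vv')$, which is C2. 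The real work is carried by Lemma~\ref{lemma:straightening} itself together with this C2 step, whose delicate point is precisely the independence of the structure constants from the left triple; that independence is exactly what is bought by funnelling every right multiplication through the single element $m_{\lambda,\mu}$ before reattaching $b_{(\tt',\uu',\vv')}^*$.
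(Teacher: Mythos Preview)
Your argument is correct and follows the paper's strategy almost verbatim: spanning via the row-standard description of $B^{\trianglerighteq(\lambda,\mu)}$ together with Lemma~\ref{lemma:straightening}, cardinality via Corollary~\ref{cor:rank}(2), and C1 via the horizontal reflection. The one place where you deviate is in C2. The paper first observes, as you do, that modulo $B^{\rhd(\lambda,\mu)}$ the product $m_{(\tt',\uu',\vv'),(\tt,\uu,\vv)}b$ is a combination of $m_{(\tt',\uu',\vv'),(\tilde\tt,\tilde\uu,\tilde\vv)}$, but then proves independence of the coefficients from $(\tt',\uu',\vv')$ by fixing a reference triple $(\tt_0,\uu_0,\vv_0)$ and producing, for each standard left triple, an element $d\in R(\mathfrak S_r\times\mathfrak S_s)$ with $d\,m_{(\tt_0,\uu_0,\vv_0),(\overline\tt,\overline\uu,\overline\vv)}=m_{(\tt',\uu',\vv'),(\overline\tt,\overline\uu,\overline\vv)}$; left-multiplying by $d$ transports the coefficients from the reference case. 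Your route instead exploits the factorisation $m_{(\tt',\uu',\vv'),(\tt,\uu,\vv)}=b_{(\tt',\uu',\vv')}^*\,m_{\lambda,\mu}\,b_{(\tt,\uu,\vv)}$ directly: you compute $m_{\lambda,\mu}b_{(\tt,\uu,\vv)}a$ once, independently of the left triple, and only then premultiply by $b_{(\tt',\uu',\vv')}^*$. Both arguments are short and rest on the same ingredients; yours avoids introducing $(\tt_0,\uu_0,\vv_0)$ and $d$, while the paper's makes the ``left-transport'' mechanism explicit, which is convenient later in the restriction arguments.
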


\begin{proof}
  Note that an analogue version of Lemma~\ref{lemma:straightening} for
  $(\tt',\uu',\vv')$  not standard is
  also valid. Thus
  it follows from Lemma~\ref{lemma:straightening}
  that  the $m_{(\tt',\uu',\vv'),(\tt,\uu,\vv)}$ for
  standard triples $(\tt',\uu',\vv'),(\tt,\uu,\vv)$
  of shape $(\lambda',\mu')\unrhd(\lambda,\mu)$
  generate $ B^{\trianglerighteq(\lambda,\mu)}$. In particular, the set of all such
  elements    generates $B_{r,s}(x)=B^{\trianglerighteq((1^r),(1^s))}$.  
  Since the cardinality of the set of these elements is equal to the
  rank of $B_{r,s}(x)$, this set is  linearly independent. 
  
  Recall the anti-involution $*$ of $B_{r,s}(x)$ given by reflecting
  diagrams at a
  horizontal axis.  Then by definition,
  $m_{(\tt',\uu',\vv'),(\tt,\uu,\vv)}^*=
  m_{(\tt,\uu,\vv),(\tt',\uu',\vv')}$.

  It remains to prove the second
  property of a cellular algebra. Let $b\in B_{r,s}(x)$ and
  $(\tt,\uu,\vv)$ and $(\tt',\uu',\vv')$ be  standard triples
  of shape $(\lambda,\mu)$.
  Then $m_{(\tt',\uu',\vv'),(\tt,\uu,\vv)}b$ is an element of
  $B^{\trianglerighteq(\lambda,\mu)}$  and thus can be written as a linear combination
  of
  $m_{(\tilde\tt',\tilde\uu',\tilde\vv'),(\tilde\tt,\tilde\uu,\tilde\vv)}$
  for certain standard triples of shape dominating
  $(\lambda,\mu)$ by  Lemma~\ref{lemma:straightening}.
  If $m_{(\tilde\tt',\tilde\uu',\tilde\vv'),(\tilde\tt,\tilde\uu,\tilde\vv)}$
  appears in this linear combination, then we have by construction that 
  $(\tilde\tt',\tilde\uu',\tilde\vv')= (\tt',\uu',\vv')$ or the shape 
  of $(\tilde\tt',\tilde\uu',\tilde\vv')$ strictly dominates  $(\lambda,\mu)$.

  Let $(\tt_0,\uu_0,\vv_0)$ be the unique standard triple of shape
  $(\lambda,\mu)$ such that 
  $\nu_0=(\lambda_1,\ldots,\lambda_l,1^k)$ is the shape of $\tt_0$ and
   $d(\tt_0)$, $d(\ss_0)$, $d(\vv_0)$ and
  $d^{\op}(\oo_0)$ are the identity. 

  For each standard triple  $(\tt',\uu',\vv')$
  there is a (linear combination of) walled Brauer diagram(s) $d$ without
  horizontal edges such that
  $dm_{(\tt_0,\uu_0,\vv_0),(\overline\tt,\overline\uu,\overline\vv)}=
  m_{(\tt',\uu',\vv'),(\overline\tt,\overline\uu,\overline\vv)}$
  for all standard triples  $(\overline\tt,\overline\uu,\overline\vv)$
  and the result follows. 

\end{proof}

\begin{remark}
\begin{enumerate}
\item The ideal $B^{\trianglerighteq(\lambda,\mu)}$  is exactly the cell ideal $B_{r,s}(x)(\trianglerighteq(\lambda,\mu))$. 
\item
  If $r=0$ or $s=0$, then  $B_{r,s}(x)$ is the group algebra of a
  symmetric group and  the basis of $B_{r,s}(x)$ we just defined
  coincides with the basis from Theorem~\ref{thm:murphy}. 
\end{enumerate}
\end{remark}


\section{Restriction}\label{section:restriction}
In this section we consider the restriction of cell modules to
$B_{r,s-1}(x)$ or $B_{r-1,0}(x)$ for $s=0$. 
\begin{defn}
  For every $(\lambda,\mu)\in\Lambda(r,s)$ let
  the cell module $C^{(\lambda,\mu)}$ be the right  $B_{r,s}(x)$-submodule of
  $B^{\trianglerighteq(\lambda,\mu)} /
  B^{\triangleright(\lambda,\mu)}$ generated by
  $\widetilde{m}_{\lambda,\mu} + B^{\triangleright(\lambda,\mu)}$
  where $ B^{\triangleright(\lambda,\mu)}=
  \bigcap_{(\lambda',\mu')\triangleright(\lambda,\mu)} B^{\trianglerighteq(\lambda',\mu')}$. 
  These modules  in fact coincide with the cell modules in
  \cite{grahamlehrer}. 

  A basis of $C^{(\lambda,\mu)}$ is indexed by $M(\lambda,\mu)$ and in
  abuse of notation we use the set
  \[
  \{ m_{(\tt,\uu,\vv)} \ |\ (\tt,\uu,\vv)\in M(\lambda,\mu) \}
  \]
  as a basis for $C^{(\lambda,\mu)}$.
\end{defn}
Note that the action of the walled Brauer algebra on a cell module
is again given by
concatenation, deleting cycles by multiplication with $x$ and  factoring out diagrams involving $m_{\lambda',\mu'}$
with $(\lambda',\mu')\triangleright(\lambda,\mu)$.

If $s>0$, then the walled Brauer diagrams of $B_{r,s-1}(x)$ can be embedded into
$B_{r,s}(x)$ by adding  a vertical edge
connecting the rightmost vertex in the top row with the rightmost
vertex in the bottom row. Similarly, $B_{r-1,0}(x)$ can be embedded
into $B_{r,0}(x)$. We get a tower of algebras 
\[
R=B_{1,0}(x) \subset B_{2,0}(x) \subset \dots \subset B_{r,0}(x)
\subset B_{r,1}(x) \subset \dots \subset B_{r,s}(x).
\]
In this section, we describe the behavior of the cell modules under
restriction in this tower of algebras.  Fix $r$ and $s$ and let
$\tilde{B}= B_{r-1,0}(x)$ if $s=0$ and $\tilde{B}=B_{r,s-1}(x)$ if
$s\geq 1$. 
For a $B_{r,s}(x)$-module $M$ we denote with $\Res(M)$ its restriction
to the subalgebra $\tilde{B}$.

Let $(\tt,\uu,\vv)$ be a standard triple in $M(\lambda,\mu)$ for
$(\lambda,\mu)\in\Lambda(r,s)$. First, suppose $s=0$. Then $\uu$ and $\vv$ are empty
tableaux. Let $\tt'=\tt\downarrow r-1$, then
$(\tt',\emptyset,\emptyset)$ is a standard triple of shape
$(\lambda',\mu)\in \Lambda(r-1,0)$. We call this triple 
the restriction of the triple
$(\tt,\uu,\vv)$ and denote it 
by $\Res(\tt,\uu,\vv)$. 

If $s\geq 1$ then we  set $\uu' :=
\uu\downarrow (s-1)$ and $\vv' := \vv\downarrow (s-1)$. The triple
$(\tt,\uu',\vv')$ is in $M(\lambda',\mu')$ for
$(\lambda',\mu')\in\Lambda(r,s-1)$. More precisely $(\tt,\uu',\vv')$
is either an element of $M(\lambda,\mu')$ and $\mu'$ is obtained from
$[\mu]$ by removing a box, or $(\tt,\uu',\vv')$ is an element of
$M(\lambda',\mu)$ and $[\lambda']$ is obtained from $[\lambda]$ by
adding a box. 
Again, we call $(\tt,\uu',\vv')$ the restriction of the triple
$(\tt,\uu,\vv)$ and denote it 
by $\Res(\tt,\uu,\vv)$. 

For $(\lambda,\mu)\in\Lambda(r,s)$ we define the set
$\Res(\lambda,\mu)$ to be the set of tuples $(\lambda',\mu')$ occurring
as shapes of restrictions of standard triples of shape $(\lambda,\mu)$. 
So if $s=0$, then $\Res(\lambda,\emptyset)=\{(\lambda',\emptyset)\}$
where $[\lambda']$ is obtained from $[\lambda]$ by removing a box. 
If $s\geq 1$, then $\Res(\lambda,\mu)$ is the set of tuples
$(\lambda',\mu')$  obtained from $(\lambda,\mu)$ by
either removing a box from $[\mu]$ or adding a box to $[\lambda]$ if
$[\lambda]$ has less then $r$ boxes. 

We are now able to define for each $(\lambda',\mu')\in\Res(\lambda,\mu)$ the
following $R$-submodules of $C^{(\lambda,\mu)}$  
\[
U^{\trianglerighteq(\lambda',\mu')} := \langle m_{(\tt,\uu,\vv)}\ |\
\shape(\Res(\tt,\uu,\vv)) \trianglerighteq
(\lambda',\mu')\rangle_{R-\operatorname{mod}},
\] 
\[
U^{\triangleright(\lambda',\mu')} := \langle m_{(\tt,\uu,\vv)}\ |\
\shape(\Res(\tt,\uu,\vv)) \rhd
(\lambda',\mu')\rangle_{R-\operatorname{mod}}. 
\]

If $s=0$ then $\mu$ is the empty partition and $\uu$ and $\vv$ are
empty tableaux which can be omitted, e.~g.~$ U^{\trianglerighteq(\lambda',\emptyset)}=U^{\trianglerighteq\lambda'}$. We have the classical result for
the group algebra of the symmetric group $R\mathfrak{S}_r$: 
\begin{thm}[\cite{mathas}]\label{thm:restriction_symmetric_group}

  The modules $U^{\trianglerighteq\lambda'}$ and 
  $U^{\triangleright \lambda'}$ are $R\mathfrak{S}_{r-1}$-submodules of
  $\Res C^\lambda$. The modules $U^{\trianglerighteq\lambda'}$ are
  linearly ordered by inclusion and give a filtration of the restricted
  cell module $\Res C^\lambda$ with factors
  $U^{\trianglerighteq\lambda'}/U^{\triangleright \lambda'}\cong
  C^{\lambda'}$. The isomorphism is given by $m_\tt+U^{\triangleright
    \lambda'} \mapsto m_{\Res \tt}$.

\end{thm}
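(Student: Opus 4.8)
\emph{The plan.} I work entirely in the case $s=0$, where $\mu=\emptyset$, the triple $(\tt,\uu,\vv)$ reduces to a single standard $\lambda$-tableau $\tt$, and (by the remark identifying our basis with Murphy's) $C^{\lambda}$ is exactly the cell module of $R\mathfrak{S}_{r}$ on the Murphy basis: $m_\tt$ is the image of $y_\lambda d(\tt)=m^\lambda_{\tt^\lambda,\tt}$ modulo $R\mathfrak{S}_r(\rhd\lambda)$, the right action being that of $\tilde B=R\mathfrak{S}_{r-1}=\langle s_1,\dots,s_{r-2}\rangle$, the stabiliser of the letter $r$. Here $\Res\tt=\tt\downarrow(r-1)$ deletes the box containing $r$, and restriction sets up a bijection between $\{\tt\in\Std(\lambda):\shape(\Res\tt)=\lambda'\}$ and $\Std(\lambda')$, the inverse reinserting $r$ into the unique box of $[\lambda]\setminus[\lambda']$. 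I first record that on $\Res(\lambda)$ the dominance order is \emph{total}: removing a box from a lower row yields a dominance-larger partition. Hence the $U^{\trianglerighteq\lambda'}$ are automatically nested and $U^{\triangleright\lambda'}=U^{\trianglerighteq\lambda''}$ for the dominance-successor $\lambda''$; once the pieces are shown to be submodules, the chain and the identification of successive quotients are then immediate.

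The technical heart is the following assertion about the right $\mathfrak{S}_{r-1}$-action, which I isolate as a lemma: for every $w\in\mathfrak{S}_{r-1}$ and every standard $\tt$ of shape $\lambda$,
\[
m_\tt\, w\equiv\sum_{\substack{\ss\in\Std(\lambda)\\ \shape(\Res\ss)\trianglerighteq\shape(\Res\tt)}} c_\ss\, m_\ss \pmod{R\mathfrak{S}_r(\rhd\lambda)},
\]
and whenever $\shape(\Res\ss)=\shape(\Res\tt)=\lambda'$ the coefficient $c_\ss$ equals the structure constant governing $m_{\Res\tt}\,w$ in $C^{\lambda'}$ under the bijection above. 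To prove the dominance constraint I reduce to $w=s_k$ with $k\le r-2$, write $m_\tt s_k=y_\lambda d(\tt)s_k$ and straighten by Proposition~\ref{prop:murphy}: the standard tableaux $\ss$ that occur satisfy $\ss\unrhd\tt^{(s_k)}$ in Murphy's refined order, where $\tt^{(s_k)}$ keeps $r$ in its row because $s_k$ only permutes the letters $k,k+1\le r-1$. Evaluating the defining inequalities $\shape(\ss\downarrow i)\trianglerighteq\shape(\tt^{(s_k)}\downarrow i)$ at $i=r-1$ gives exactly $\shape(\Res\ss)\trianglerighteq\shape(\Res\tt)$; terms pushed to strictly larger restricted shape land in $U^{\triangleright\lambda'}$, and none can have strictly smaller restricted shape since that would move $r$ to a lower row, decreasing dominance.

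The equality of the leading coefficients is what makes the filtration factors genuine copies of $C^{\lambda'}$, and I expect this to be the main obstacle. The cleanest mechanism is a factorisation $d(\tt)=d_b\, d(\Res\tt)$ with $d(\Res\tt)\in\mathfrak{S}_{r-1}$ and $d_b$ a distinguished right coset representative of $\mathfrak{S}_{r-1}$ in $\mathfrak{S}_r$ depending only on the box $b$ of $r$; then $m_\tt w=y_\lambda d_b\,\bigl(d(\Res\tt)w\bigr)$ with $d(\Res\tt)w\in\mathfrak{S}_{r-1}$. One must show that $y_\lambda d_b$ behaves, modulo elements whose straightening lands either in a strictly larger restricted shape (hence in $U^{\triangleright\lambda'}$) or in $R\mathfrak{S}_r(\rhd\lambda)$, like the signed row-symmetriser $y_{\lambda'}$ for $\mathfrak{S}_{r-1}$. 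Concretely this amounts to checking that the $\mathfrak{S}_r$-straightening of $y_\lambda d_b\,d(\Res\tt)w$, restricted to the letters $\le r-1$, reproduces the $\mathfrak{S}_{r-1}$-straightening of $y_{\lambda'} d(\Res\tt)w=m_{\Res\tt}\,w$ defining the action on $C^{\lambda'}$; the nontrivial point is that the "header" $y_\lambda d_b$ and the reinsertion of $r$ contribute only higher-dominance corrections.

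Granting the lemma, the remaining steps are formal. The dominance constraint shows at once that $U^{\trianglerighteq\lambda'}$ and $U^{\triangleright\lambda'}$ are $R\mathfrak{S}_{r-1}$-submodules of $\Res C^\lambda$. I then define $\pi_{\lambda'}\colon U^{\trianglerighteq\lambda'}\to C^{\lambda'}$ by $m_\tt\mapsto m_{\Res\tt}$ when $\shape(\Res\tt)=\lambda'$ and $m_\tt\mapsto 0$ otherwise; the leading-coefficient part of the lemma makes $\pi_{\lambda'}$ an $R\mathfrak{S}_{r-1}$-homomorphism, it is surjective by the tableau bijection, and since the $m_{\Res\tt}$ with $\shape(\Res\tt)=\lambda'$ form a basis of $C^{\lambda'}$ its kernel is exactly $U^{\triangleright\lambda'}$. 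Thus $U^{\trianglerighteq\lambda'}/U^{\triangleright\lambda'}\cong C^{\lambda'}$ via $m_\tt+U^{\triangleright\lambda'}\mapsto m_{\Res\tt}$, and splicing these isomorphisms along the total order on $\Res(\lambda)$ yields the asserted filtration of $\Res C^\lambda$.
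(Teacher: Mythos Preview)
The paper does not give a proof of this theorem; it is stated as a classical result and attributed to Mathas via the citation. So there is no argument in the paper to compare your proposal against, and your outline is essentially the standard one that appears in Mathas.

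Your argument for the submodule property is correct: acting by $s_k$ with $k\le r-2$ produces a row-standard tableau in which $r$ has not moved, and then Proposition~\ref{prop:murphy} forces every standard $\ss$ appearing in the straightening to satisfy $\shape(\ss\!\downarrow\!(r-1))\trianglerighteq\shape(\tt\!\downarrow\!(r-1))$. The total ordering of $\Res(\lambda)$ and the resulting chain of inclusions are also fine.

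The gap is exactly where you flag it. You write ``one must show that $y_\lambda d_b$ behaves \ldots\ like the signed row-symmetriser $y_{\lambda'}$'' and ``concretely this amounts to checking \ldots'', but you do not carry out that check. As written, the map $\pi_{\lambda'}$ is defined on a basis and shown to be a bijection, but its $R\mathfrak S_{r-1}$-linearity is asserted rather than proved. The missing computation is genuine: one must decompose $y_\lambda$ as $y_{\lambda'}$ times a signed sum of coset representatives for the row containing $r$, then verify that under straightening the non-identity cosets (which displace $r$ within its row) contribute only to $U^{\triangleright\lambda'}$ or to $R\mathfrak S_r(\rhd\lambda)$, while the identity coset reproduces the $C^{\lambda'}$-action. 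Your factorisation $d(\tt)=d_b\,d(\Res\tt)$ is correct in spirit, but one has to be careful: the natural embedding of $d(\Res\tt)$ into $\mathfrak S_r$ uses the row-reading positions of $[\lambda']$ inside $[\lambda]$, which is not literally the stabiliser of $r$ unless the removed box is in the last row, so a conjugation by $d_b$ is hiding there. None of this is hard, but until it is written down your proposal is a correct plan rather than a complete proof.
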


As the main result of this section we obtain a cell filtration of the
restriction of cell modules. 
\begin{thm}\label{thm:restriction}
  \begin{enumerate}
  \item The modules $U^{\trianglerighteq(\lambda',\mu')}$ and
    $U^{\triangleright(\lambda',\mu')}$ are $\tilde{B}$-submodules
    of $\Res C^{(\lambda,\mu)}$.
  \item  The set $\Res(\lambda,\mu)$ is ordered linearly 
    by dominance order. Thus the modules 
    $U^{\trianglerighteq(\lambda',\mu')}$ with
    $(\lambda',\mu')\in\Res(\lambda,\mu)$ are accordingly ordered by
    inclusion and we get a
    filtration  
    \[
    \{0\}\subset U^{\trianglerighteq(\lambda'_1,\mu'_1)} \subset U^{\trianglerighteq(\lambda'_2,\mu'_2)} \subset \dots \subset U^{\trianglerighteq(\lambda'_l,\mu'_l)} 
    = \Res C^{(\lambda,\mu)} 
    \]
    of $\Res C^{(\lambda,\mu)}$ by $\tilde{B}$-modules.
  \item Further a basis of each quotient $U^{\trianglerighteq(\lambda',\mu')} /
    U^{\triangleright(\lambda',\mu')}$ is given by
    \[\{ m_{(\tt,\uu,\vv)} +
    U^{\triangleright(\lambda',\mu')} \ |\
    \shape(\Res(\tt,\uu,\vv)) = (\lambda',\mu') \}.\]  
  \item
    The $R$-linear map $U^{\trianglerighteq(\lambda',\mu')} /
    U^{\triangleright(\lambda',\mu')}\to C^{(\lambda',\mu')}$ given by 
    \[
    m_{(\tt,\uu,\vv)} + U^{\triangleright(\lambda',\mu')}
    \mapsto m_{\Res(\tt,\uu,\vv)}
    \]
    is an isomorphism of  
    $\tilde{B}$-modules for
    every $(\lambda',\mu')\in\Res(\lambda,\mu)$.
  \end{enumerate}
\end{thm}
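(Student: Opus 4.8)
The plan is to reduce everything to the symmetric group case (Theorem~\ref{thm:restriction_symmetric_group}) by exploiting the diagrammatic description of $m_{(\tt,\uu,\vv)}$ and $m_{(\tt',\uu',\vv'),(\tt,\uu,\vv)}$ from Figure~\ref{mtuv} and Figure~\ref{figure:basiselement}. First I would treat the case $s=0$ separately, where the statement is essentially Theorem~\ref{thm:restriction_symmetric_group} applied to the $\tt$-part of the triple: restriction means passing from $\tt$ to $\tt\downarrow(r-1)$, and since $\uu,\vv$ are empty the four assertions follow directly from the classical result, noting that the embedding $B_{r-1,0}(x)\subset B_{r,0}(x)$ adds a vertical edge at the last strand, which is exactly the embedding $R\mathfrak{S}_{r-1}\subset R\mathfrak{S}_r$.

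For $s\geq 1$, the key observation is that restriction to $\tilde B=B_{r,s-1}(x)$ only involves what happens at the rightmost up-arrow strand in the bottom row of the diagram in Figure~\ref{figure:basiselement}. An element $b\in\tilde B$, embedded in $B_{r,s}(x)$, has a vertical edge at that last strand, so right-multiplication $m_{(\tt,\uu,\vv)}\cdot b$ acts only through the part of the diagram below the box $y_\mu$ (and below the horizontal edges feeding into $y_\nu$), i.e.\ through a Murphy-type element $m^\mu_{\vv',\vv}$ together with the lower part of the $y_\nu$-block. The endpoint $s$ in the bottom row lies either in the $\vv$-tableau (it is a box of $[\mu]$) or as the endpoint of a horizontal edge, in which case it corresponds to the box of $\uu$ removed when passing to $\uu\downarrow(s-1)$; this dichotomy is precisely the two cases in the definition of $\Res(\tt,\uu,\vv)$ and of $\Res(\lambda,\mu)$. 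So the plan is: (1) check $\Res(\lambda,\mu)$ is totally ordered by $\unrhd$ — here removing a box from $\mu$ always gives something strictly bigger in the order than adding a box to $\lambda$ since the former keeps $k$ fixed while... actually one must be careful, both moves keep $r+s-1$ fixed; compare $k$-values: removing a box from $\mu$ leaves $k$ unchanged, adding a box to $\lambda$ decreases $k$ by one, so the ``remove from $\mu$'' shapes all dominate the ``add to $\lambda$'' shapes, and within each family the tuples are totally ordered by the usual dominance on a single varying partition; (2) show $U^{\trianglerighteq(\lambda',\mu')}$ is a $\tilde B$-submodule by checking that right multiplication by a diagram of $\tilde B$ can only move $\shape(\Res(\tt,\uu,\vv))$ upward in this order, using Lemma~\ref{lemma:straightening} and Proposition~\ref{prop:murphy} applied to the $m^\mu_{\vv',\vv}$ and $m^\nu_{\oo,\tt}$ factors; (3) identify the quotient.

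For the identification in (3) and (4), I would define the candidate map on basis elements by $m_{(\tt,\uu,\vv)}+U^{\triangleright(\lambda',\mu')}\mapsto m_{\Res(\tt,\uu,\vv)}$ and check it is a $\tilde B$-module homomorphism by comparing the diagram of $m_{(\tt,\uu,\vv)}\cdot b$ in $C^{(\lambda,\mu)}$ with that of $m_{\Res(\tt,\uu,\vv)}\cdot b$ in $C^{(\lambda',\mu')}$: modulo $U^{\triangleright(\lambda',\mu')}$ (resp.\ modulo the relevant $(\lambda'',\mu'')$-elements with $(\lambda'',\mu'')\rhd(\lambda',\mu')$ on the cell-module side) the last strand plays no role and the two computations agree strand-for-strand via Theorem~\ref{thm:restriction_symmetric_group}. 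Surjectivity is clear since the $m_{\Res(\tt,\uu,\vv)}$ with $\shape(\Res(\tt,\uu,\vv))=(\lambda',\mu')$ range over a basis of $C^{(\lambda',\mu')}$ (part (3)), and injectivity follows by a dimension count once (3) establishes that those cosets are linearly independent in the quotient — which in turn follows because the $m_{(\tt,\uu,\vv)}$ with $\shape(\Res(\tt,\uu,\vv))\unrhd(\lambda',\mu')$ form part of the cellular basis of $C^{(\lambda,\mu)}$ and are linearly independent. I expect the main obstacle to be part (3): proving that the proposed spanning set of the quotient is actually a basis, i.e.\ that no nontrivial relation among the $m_{(\tt,\uu,\vv)}$ with $\shape(\Res)=(\lambda',\mu')$ is created modulo $U^{\triangleright(\lambda',\mu')}$; this requires knowing that the straightening relations of Lemma~\ref{lemma:straightening}, when restricted, respect the filtration, which is where Remark~\ref{remark:straightening} (the invariance $\tilde\ss=\ss$ under straightening) should be invoked to control the position of the last strand.
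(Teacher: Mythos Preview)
Your approach is essentially the one the paper takes: reduce $s=0$ to Theorem~\ref{thm:restriction_symmetric_group}, and for $s\geq 1$ verify on generators of $\tilde B$ that multiplication can only move $\shape(\Res(\tt,\uu,\vv))$ upward and that the same coefficients appear in $C^{(\lambda',\mu')}$ after applying $\Res$. Two small corrections are worth making. First, your worry about part~(3) is misplaced: the $m_{(\tt,\uu,\vv)}$ for $(\tt,\uu,\vv)\in M(\lambda,\mu)$ already form an $R$-basis of $C^{(\lambda,\mu)}$, and $U^{\trianglerighteq(\lambda',\mu')}$, $U^{\triangleright(\lambda',\mu')}$ are by definition spans of complementary subsets of that basis, so the quotient basis is immediate---no straightening argument is needed there, and Remark~\ref{remark:straightening} is not the point. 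Second, you underestimate the work for the generator $e=\htangle{generators.3}{.3cm}$: unlike the transpositions, $e$ does not act ``through a Murphy-type element'' in any simple sense, and the paper handles it by an explicit three-case analysis according to the positions of $r$ in $\tt$ and of $1$ in $\uu$ or $\vv$, computing $m_{(\tt,\uu,\vv)}\cdot e$ diagrammatically in each case and then invoking the straightening of Lemma~\ref{lemma:straightening}. This is where most of the proof's content lies, so your sketch should make the generator-by-generator check explicit rather than appealing globally to Proposition~\ref{prop:murphy}.
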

\begin{proof}
  It can be seen easily, that $\Res(\lambda,\mu)$ is ordered
  linearly. 
  If $s=0$ then the theorem follows from
  Theorem~\ref{thm:restriction_symmetric_group}.
  So  suppose $s\geq 1$.  Let $(\tt,\uu,\vv)\in M(\lambda,\mu)$
  and $b\in \tilde{B}=B_{r,s-1}(x)$. 
  Suppose, the elements
  $\alpha_{(\tilde{\tt},\tilde{\uu},\tilde{\vv})}\in R$ are such that
  \begin{equation}\label{eqn:mult}
    m_{(\tt,\uu,\vv)}\cdot b 
    = \sum_{(\tilde{\tt},\tilde{\uu},\tilde{\vv})\in M(\lambda,\mu)}
    {\alpha_{(\tilde{\tt},\tilde{\uu},\tilde{\vv})}
      m_{(\tilde{\tt},\tilde{\uu},\tilde{\vv})}}.
  \end{equation}
  in $C^{(\lambda,\mu)}$. We have to show that
  $\alpha_{(\tilde{\tt},\tilde{\uu},\tilde{\vv})}\neq 0$ implies that  
  $\shape(\Res(\tilde{\tt},\tilde{\uu},\tilde{\vv}))\trianglerighteq
  \shape(\Res(\tt,\uu,\vv))=(\lambda',\mu')$ and that the following holds in
  $C^{(\lambda',\mu')}$:

\begin{equation}\label{eqn:multres}
    m_{\Res(\tt,\uu,\vv)}\cdot b 
    = \sum_{\substack{(\tilde{\tt},\tilde{\uu},\tilde{\vv})\in M(\lambda,\mu)\\
        \Res(\tilde{\tt},\tilde{\uu},\tilde{\vv})=(\lambda',\mu')}}
    {\alpha_{(\tilde{\tt},\tilde{\uu},\tilde{\vv})}
      m_{\Res(\tilde{\tt},\tilde{\uu},\tilde{\vv})}}.
  \end{equation}

  Clearly, it is enough to show this claim for generators $b$ of $
  B_{r,s-1}(x)$. It is well known that the walled Brauer algebra is
  generated by walled diagrams of the form
  $\htangle{generators.1}{.3cm}$, 
  $\htangle{generators.2}{.3cm}$ and  
  $\htangle{generators.3}{.3cm}$.
  Recall the definition of the element  $m_{(\tt,\uu,\vv)}$ in
  Figure~\ref{mtuv}. 

  If $b=\htangle{generators.1}{.3cm}\in B_{r,s-1}(x)$ is a basic
  transposition on the `left side' of 
  the wall, then 
  \[
  m_{(\tt,\uu,\vv)}\cdot b =  m_{(\tt',\uu,\vv)},
  \]
  where   $\tt'$ is obtained from $\tt$ by interchanging two entries. 
  Let $\nu=\shape(\tt)$. If
  $m_{\tt'}=\sum_{\tilde\tt}a_{\tilde\tt}m_{\tilde\tt}$ in $C^\nu$
  then by the construction in the proof of
  Lemma~\ref{lemma:straightening} we have
  \[
  m_{(\tt',\uu,\vv)}=\sum_{\tilde\tt}a_{\tilde\tt}m_{(\tilde\tt,\uu,\vv)} 
  \]
  in $C^{(\lambda,\mu)}$. So if
  $a_{(\tilde\tt,\uu,\vv)}=a_{\tilde\tt}\neq 0$ then $\shape
  (\tilde\tt)=\shape(\tt)=\nu$ and thus
  $\shape(\Res(\tilde{\tt},\uu,\vv))=
  \shape(\Res({\tt},\uu,\vv))$. By the same  considerations we have 
  \[
  m_{\Res(\tt',\uu,\vv)}=\sum_{\tilde\tt}a_{\tilde\tt}m_{\Res(\tilde\tt,\uu,\vv)} 
  \]
  in $C^{(\lambda',\mu')}$.

  Suppose now that
  $b=\htangle{generators.2}{.3cm}\in 
  B_{r,s-1}(x)$ is a basic transposition on the `right 
  side' of the wall, then 
  \[
  m_{(\tt,\uu,\vv)}\cdot b   =\pm m_{(\tt,\uu',\vv')},
  \]
  where $(\uu',\vv')$ is obtained from $(\uu,\vv)$ by interchanging two entries and reordering the entries in each row. 	
   Keep in mind, that the position of $s$ in the pair
  $(\uu',\vv')$ is the same as it is in $(\uu,\vv)$. 

   Suppose 
  $a_{(\tilde{\tt},\tilde{\uu},\tilde{\vv})}\neq 0$.
  By construction 
  $\tilde{\vv}\trianglerighteq\vv'$ and
  $\tilde\oo\trianglerighteq^{\op}\oo'$, where $\tilde\oo$ and $\oo'$ are
  the additional tableaux to $(\tilde{\tt},\tilde{\uu},\tilde{\vv})$
  and $(\tt',\uu',\vv')$ with the same additional entries. 

  In particular, we have
  $\shape(\tilde{\vv}\downarrow
  (s-1))\trianglerighteq\shape(\vv'\downarrow (s-1))$. 
  Note that $\vv$ and
  $\vv'$ have the same shape and the position of $s$ in $\vv$ and
  $\vv'$ is the same if $s$ appears in $\vv$. Thus	
	we have
  $\shape(\vv'\downarrow (s-1))=\shape(\vv\downarrow (s-1))=\mu'$, so  $\shape(\tilde{\vv}\downarrow
  (s-1))\trianglerighteq \mu'$.
  
  Let $\oo_{\Res}$ be the
  additional tableau for the triple $\Res(\tt,\uu,\vv)$. Similarly,
  let   $\tilde\oo_{\Res}$ and
  $\oo_{\Res}'$ be defined.  Since
  $\tilde\oo\trianglerighteq^{\op}\oo'$ we have 
  $\tilde\oo_{\Res}\trianglerighteq^{\op}\oo'_{\Res}$
  and thus
  $\shape(\tilde\oo_{\Res}\uparrow s)
  \trianglerighteq^{\op}\shape(\oo_{\Res}'\uparrow s)
  =\shape(\oo_{\Res}\uparrow s)=\lambda'$. It follows that
  $\shape(\Res(\tilde{\tt},\tilde{\uu},\tilde{\vv}))
  =(\shape(\tilde\oo_{\Res}\uparrow s),\shape(\tilde{\vv}\downarrow
  (s-1)))
  \trianglerighteq
  \shape(\Res(\tt,\uu,\vv))=(\lambda',\mu')$. 
  Equation~\eqref{eqn:multres} follows by the classical result and the
  proof of Lemma~\ref{lemma:straightening}. 

  Now let $b = e =\htangle{generators.3}{.3cm}\in B_{r,s-1}(x)$. Note
  that in this case, $s\geq 2$. At
  first sight the action of $e$ seems to be more involved than the
  action of the transpositions,  but it can also be described
  combinatorially if we do a case-by-case analysis.  
  We have to take a look at the positions of $r$ in $\tt$ and $1$ in
  $\uu$ or $\vv$ and distinguish three cases. 
  \begin{itemize}
  \item
    First case: $r$ is  an entry in row $l$ of $\tt$, $1$ is  an entry
    in row $l$ of $\uu$.

    Denote the other entries in row $l$ of $\tt$ with $a_1,\dots,a_k$.
    So we have
    \[
    (\tt,\uu,\vv) = \left(\
      \xy<0cm,1.25cm>\xymatrix @R .5cm @C .5cm{
        *={}
        & *={}
        & *={}
        & *={}
        & *={}
        & *={}
        & *={} \ar @{--} [llllll]
        \\
        *={} \ar @{--} [u]
        & *={} \ar @{-} [l]
        & *={} \ar @{-} [l]
        & *={} \ar @{-} [l]
        & *={} \ar @{-} [l]
        & *={} \ar @{-} [l] \ar @{~~} [ru]
        \\
        *={} \ar @{-} [u]
        & *={} \ar @{-} [l] \ar @{-} [u] \ar @{} [ul]|{a_1}
        & *={} \ar @{-} [l] \ar @{-} [u] \ar @{} [ul]|{a_2}
        & *={} \ar @{-} [l] \ar @{-} [u] \ar @{} [ul]|{\dots}
        & *={} \ar @{-} [l] \ar @{-} [u] \ar @{} [ul]|{a_k}
        & *={} \ar @{-} [l] \ar @{-} [u] \ar @{} [ul]|{r}
        \\
        *={}
        \\
        *={}
        \\
        *={} \ar @{--} [uuu]
        & *={} \ar @{--} [l] \ar @{~~} [rrruuu]
      }\endxy
      ,
      \xy<0cm,1.25cm>\xymatrix @R .5cm @C .5cm{
        *={}
        & *={}
        & *={}
        & *={}
        & *={}
        & *={}
        & *={} \ar @{--} [lll]
        \\
        *={}
        & *={}
        & *={}
        & *={}
        & *={}
        & *={} \ar @{-} [l] \ar @{~~} [ru]
        \\
        *={}
        & *={}
        & *={}
        & *={}
        & *={} \ar @{-} [u]
        & *={} \ar @{-} [l] \ar @{-} [u] \ar @{} [ul]|{1}
        \\
        *={} \ar @{~~} [rrruuu]
        \\
        *={}
        \\
        *={} \ar @{--} [uu]
        & *={} \ar @{--} [l] \ar @{~~} [rrruuu]
      }\endxy 
      ,
      \xy<0cm,1.25cm>\xymatrix @R .5cm @C .5cm{
        *={}
        & *={}
        & *={}
        & *={}
        & *={}
        & *={} \ar @{--} [lllll]
        \\
        *={}
        \\
        *={}
        \\
        *={}
        \\
        *={} \ar @{--} [uuuu]
        & *={} \ar @{--} [l] \ar @{~~} [rrrruuuu]
      }\endxy\
    \right).
    \]
    This means that in $m_{(\tt,\uu,\vv)}$, the box containing $y_{(k+1)}$
    in $y_\nu$ is connected to the vertices in the bottom row indexed by
    $r$ on the left and $1$ on the right. It can be verified by direct computation
    that $\htangle{restriction.1}{1cm}
    =(x-k)\cdot\htangle{restriction.2}{1cm} $. 
    Thus we get $m_{(\tt,\uu,\vv)}\cdot e = (x-k) \cdot
    m_{(\tilde\tt,\tilde\uu,\tilde\vv)}$ with
    \[
    (\tilde\tt,\tilde\uu,\tilde\vv) = \left(\
      \xy<0cm,1.5cm>\xymatrix @R .5cm @C .5cm{
        *={}
        & *={}
        & *={}
        & *={}
        & *={}
        & *={}
        & *={} \ar @{--} [llllll]
        \\
        *={} \ar @{--} [u]
        & *={} \ar @{-} [l]
        & *={} \ar @{-} [l]
        & *={} \ar @{-} [l]
        & *={} \ar @{-} [l]
        & *={} \ar @{-} [l] \ar @{~~} [ru]
        \\
        *={} \ar @{-} [u]
        & *={} \ar @{-} [l] \ar @{-} [u] \ar @{} [ul]|{a_1}
        & *={} \ar @{-} [l] \ar @{-} [u] \ar @{} [ul]|{a_2}
        & *={} \ar @{-} [l] \ar @{-} [u] \ar @{} [ul]|{\dots}
        & *={} \ar @{-} [l] \ar @{-} [u] \ar @{} [ul]|{a_k}
        \\
        *={}
        \\
        *={}
        \\
        *={} \ar @{--} [uuu]
        & *={} \ar @{-} [l] \ar @{~~} [rrruuu]
        \\
        *={} \ar @{-} [u]
        & *={} \ar @{-} [l] \ar @{-} [u] \ar @{} [ul]|{r}
      }\endxy
      ,
      \xy<0cm,1.5cm>\xymatrix @R .5cm @C .5cm{
        *={}
        & *={}
        & *={}
        & *={}
        & *={}
        & *={}
        & *={} \ar @{--} [lll]
        \\
        *={}
        & *={}
        & *={}
        & *={}
        & *={}
        & *={} \ar @{-} [l] \ar @{~~} [ru]
        \\
        *={}
        & *={}
        & *={}
        & *={}
        & *={} \ar @{-} [u]
        & *={}
        \\
        *={} \ar @{~~} [rrruuu]
        \\
        *={}
        \\
        *={} \ar @{--} [uu]
        & *={} \ar @{-} [l] \ar @{~~} [rrruuu]
        \\
        *={} \ar @{-} [u]
        & *={} \ar @{-} [l] \ar @{-} [u] \ar @{} [ul]|{1}
      }\endxy 
      ,
      \xy<0cm,1.5cm>\xymatrix @R .5cm @C .5cm{
        *={}
        & *={}
        & *={}
        & *={}
        & *={}
        & *={} \ar @{--} [lllll]
        \\
        *={}
        \\
        *={}
        \\
        *={}
        \\
        *={} \ar @{--} [uuuu]
        & *={} \ar @{--} [l] \ar @{~~} [rrrruuuu]
      }\endxy\
    \right)
    \]
    which is a standard triple. 
    In the same way we get the equation
    $m_{\Res(\tt,\uu,\vv)}\cdot e = (x-k) \cdot
    m_{\Res(\tilde\tt,\tilde\uu,\tilde\vv)}$
    in $C^{(\lambda',\mu')}$.

  \item Second case:  $1$ is an entry in row $l$ of $\uu$ and $r$ is
    not in row $l$ of $\tt$.

    Denote the entries in row $l$ of $\tt$ with
    $a_1,\dots,a_k$, then we have 
    \[
    (\tt,\uu,\vv) = \left(\
      \xy<0cm,1.25cm>\xymatrix @R .5cm @C .5cm{
        *={}
        & *={}
        & *={}
        & *={}
        & *={}
        & *={}
        & *={} \ar @{--} [llllll]
        \\
        *={} \ar @{--} [u]
        & *={} \ar @{-} [l]
        & *={} \ar @{-} [l]
        & *={} \ar @{-} [l]
        & *={} \ar @{-} [l]
        & *={} \ar @{-} [l] \ar @{~~} [ru]
        \\
        *={} \ar @{-} [u]
        & *={} \ar @{-} [l] \ar @{-} [u] \ar @{} [ul]|{a_1}
        & *={} \ar @{-} [l] \ar @{-} [u] \ar @{} [ul]|{a_2}
        & *={} \ar @{-} [l] \ar @{-} [u] \ar @{} [ul]|{\dots}
        & *={} \ar @{-} [l] \ar @{-} [u] \ar @{} [ul]|{\dots}
        & *={} \ar @{-} [l] \ar @{-} [u] \ar @{} [ul]|{a_k}
        \\
        *={}
        & *={}
        & *={}
        & *={} \ar @{-} [l] \ar @{~~} [ru]
        \\
        *={}
        & *={}
        & *={} \ar @{-} [u]
        & *={} \ar @{-} [l] \ar @{-} [u] \ar @{} [ul]|{r}
        \\
        *={} \ar @{--} [uuuuu]
        & *={} \ar @{--} [l] \ar @{~~} [ru]
      }\endxy
      ,
      \xy<0cm,1.25cm>\xymatrix @R .5cm @C .5cm{
        *={}
        & *={}
        & *={}
        & *={}
        & *={}
        & *={}
        & *={} \ar @{--} [lll]
        \\
        *={}
        & *={}
        & *={}
        & *={}
        & *={}
        & *={} \ar @{-} [l] \ar @{~~} [ru]
        \\
        *={}
        & *={}
        & *={}
        & *={}
        & *={} \ar @{-} [u]
        & *={} \ar @{-} [l] \ar @{-} [u] \ar @{} [ul]|{1}
        \\
        *={} \ar @{~~} [rrruuu]
        \\
        *={}
        \\
        *={} \ar @{--} [uu]
        & *={} \ar @{--} [l] \ar @{~~} [rrruuu]
      }\endxy 
      ,
      \xy<0cm,1.25cm>\xymatrix @R .5cm @C .5cm{
        *={}
        & *={}
        & *={}
        & *={}
        & *={}
        & *={} \ar @{--} [lllll]
        \\
        *={}
        \\
        *={}
        \\
        *={}
        \\
        *={} \ar @{--} [uuuu]
        & *={} \ar @{--} [l] \ar @{~~} [rrrruuuu]
      }\endxy\
    \right).
    \]
    This means that in  $m_{(\tt,\uu,\vv)}$ the vertices in the bottom
    row labeled by $r$ and by $1$ are connected to different boxes
    inside $y_\nu$. Note that  $y_{(k)}=\sum_d\pm y_{(k-1)}d$ where $d$ runs
    through a set of right coset representatives of $\mathfrak{S}_{k-1}$ in
    $\mathfrak{S}_k$. 
    For every $1\leq i\leq k$ define
    \[
    (\tt'_i,\uu',\vv') = \left(\
      \xy<0cm,1.5cm>\xymatrix @R .5cm @C .5cm{
        *={}
        & *={}
        & *={}
        & *={}
        & *={}
        & *={}
        & *={} \ar @{--} [llllll]
        \\
        *={} \ar @{--} [u]
        & *={} \ar @{-} [l]
        & *={} \ar @{-} [l]
        & *={} \ar @{-} [l]
        & *={} \ar @{-} [l]
        & *={} \ar @{-} [l] \ar @{~~} [ru]
        \\
        *={} \ar @{-} [u]
        & *={} \ar @{-} [l] \ar @{-} [u] \ar @{} [ul]|{\dots}
        & *={} \ar @{-} [l] \ar @{-} [u] \ar @{} [ul]|{\hat a_i}
        & *={} \ar @{-} [l] \ar @{-} [u] \ar @{} [ul]|{\dots}
        & *={} \ar @{-} [l] \ar @{-} [u] \ar @{} [ul]|{\dots}
        \\
        *={}
        & *={}
        & *={}
        & *={} \ar @{-} [l] \ar @{~~} [ru]
        \\
        *={}
        & *={}
        & *={} \ar @{-} [u]
        & *={} \ar @{-} [l] \ar @{-} [u] \ar @{} [ul]|{a_i}
        \\
        *={} \ar @{--} [uuu]
        & *={} \ar @{-} [l] \ar @{~~} [ru]
        \\
        *={} \ar @{-} [u]
        & *={} \ar @{-} [l] \ar @{-} [u] \ar @{} [ul]|{r}
      }\endxy
      ,
      \xy<0cm,1.5cm>\xymatrix @R .5cm @C .5cm{
        *={}
        & *={}
        & *={}
        & *={}
        & *={}
        & *={}
        & *={} \ar @{--} [lll]
        \\
        *={}
        & *={}
        & *={}
        & *={}
        & *={}
        & *={} \ar @{-} [l] \ar @{~~} [ru]
        \\
        *={}
        & *={}
        & *={}
        & *={}
        & *={} \ar @{-} [u]
        & *={}
        \\
        *={} \ar @{~~} [rrruuu]
        \\
        *={}
        \\
        *={} \ar @{--} [uu]
        & *={} \ar @{-} [l] \ar @{~~} [rrruuu]
        \\
        *={} \ar @{-} [u]
        & *={} \ar @{-} [l] \ar @{-} [u] \ar @{} [ul]|{1}
      }\endxy 
      ,
      \xy<0cm,1.5cm>\xymatrix @R .5cm @C .5cm{
        *={}
        & *={}
        & *={}
        & *={}
        & *={}
        & *={} \ar @{--} [lllll]
        \\
        *={}
        \\
        *={}
        \\
        *={}
        \\
        *={} \ar @{--} [uuuu]
        & *={} \ar @{--} [l] \ar @{~~} [rrrruuuu]
      }\endxy\
    \right)
    \]
    where $\hat{a}_i$ means omitting $a_i$. 
    Then $m_{(\tt,\uu,\vv)}\cdot e = \sum_i \pm m_{(\tt_i',\uu',\vv')}$
    and similarly in $C^{(\lambda',\mu')}$ we have 
    $m_{\Res(\tt,\uu,\vv)}\cdot e = \sum_i \pm
    m_{\Res(\tt_i',\uu',\vv')}$. It might happen that
    $(\tt_i',\uu',\vv')$ is not standard. Using the same arguments as we did for the generators without horizontal edges,  $m_{(\tt,\uu,\vv)}\cdot e$ and
    $m_{\Res(\tt,\uu,\vv)}\cdot e$  can be respectively written  as
    a linear combination of basis elements and the claim follows by
    the same arguments as before. 
  \item
    Third case:  $r$ is an entry in row $l$ of $\tt$ and $1$ is an
    entry of $\vv$. 

    Let the entries in row $l$ of
    $\uu$ be $b_1,\dots,b_k$. 
    We have 
    \[
    (\tt,\uu,\vv) = \left(\
      \xy<0cm,1.25cm>\xymatrix @R .5cm @C .5cm{
        *={}
        & *={}
        & *={}
        & *={}
        & *={}
        & *={}
        & *={} \ar @{--} [llllll]
        \\
        *={}
        & *={}
        & *={}
        & *={}
        & *={}
        & *={} \ar @{-} [l] \ar @{~~} [ru]
        \\
        *={}
        & *={}
        & *={}
        & *={}
        & *={} \ar @{-} [u]
        & *={} \ar @{-} [l] \ar @{-} [u] \ar @{} [ul]|{r}
        \\
        *={}
        \\
        *={}
        \\
        *={} \ar @{--} [uuuuu]
        & *={} \ar @{--} [l] \ar @{~~} [rrruuu]
      }\endxy
      ,
      \xy<0cm,1.25cm>\xymatrix @R .5cm @C .5cm{
        *={}
        & *={}
        & *={}
        & *={}
        & *={}
        & *={}
        & *={} \ar @{--} [lll]
        \\
        *={}
        & *={}
        & *={} \ar @{~~} [ru]
        & *={}
        & *={}
        & *={} \ar @{-} [llll] \ar @{~~} [ru]
        \\
        *={}
        & *={} \ar @{-} [u]
        & *={} \ar @{-} [l] \ar @{-} [u] \ar @{} [ul]|{b_1}
        & *={} \ar @{-} [l] \ar @{-} [u] \ar @{} [ul]|{b_2}
        & *={} \ar @{-} [l] \ar @{-} [u] \ar @{} [ul]|{\dots}
        & *={} \ar @{-} [l] \ar @{-} [u] \ar @{} [ul]|{b_k}
        \\
        *={} \ar @{~~} [ru]
        \\
        *={}
        \\
        *={} \ar @{--} [uu]
        & *={} \ar @{--} [l] \ar @{~~} [rrruuu]
      }\endxy 
      ,
      \xy<0cm,1.25cm>\xymatrix @R .5cm @C .5cm{
        *={}
        & *={} \ar @{-} [l]
        & *={}
        & *={}
        & *={}
        & *={} \ar @{--} [llll]
        \\
        *={} \ar @{-} [u]
        & *={} \ar @{-} [l] \ar @{-} [u] \ar @{} [ul]|{1}
        \\
        *={}
        \\
        *={}
        \\
        *={} \ar @{--} [uuuu]
        & *={} \ar @{--} [l] \ar @{~~} [rrrruuuu]
      }\endxy\
    \right).
    \]
    For every $1\leq i\leq k$ define
    \[
    (\tt_i',\uu',\vv') = \left(\
      \xy<0cm,1.5cm>\xymatrix @R .5cm @C .5cm{
        *={}
        & *={}
        & *={}
        & *={}
        & *={}
        & *={}
        & *={} \ar @{--} [llllll]
        \\
        *={}
        & *={}
        & *={}
        & *={}
        & *={}
        & *={} \ar @{-} [l] \ar @{~~} [ru]
        \\
        *={}
        & *={}
        & *={}
        & *={}
        & *={} \ar @{-} [u]
        \\
        *={}
        \\
        *={}
        \\
        *={} \ar @{--} [uuuuu]
        & *={} \ar @{-} [l] \ar @{~~} [rrruuu]
        \\
        *={} \ar @{-} [u]
        & *={} \ar @{-} [l] \ar @{-} [u] \ar @{} [ul]|{r}
      }\endxy
      ,
      \xy<0cm,1.5cm>\xymatrix @R .5cm @C .5cm{
        *={}
        & *={}
        & *={}
        & *={}
        & *={}
        & *={}
        & *={} \ar @{--} [lll]
        \\
        *={}
        & *={}
        & *={} \ar @{~~} [ru]
        & *={}
        & *={}
        & *={} \ar @{-} [llll] \ar @{~~} [ru]
        \\
        *={}
        & *={} \ar @{-} [u]
        & *={} \ar @{-} [l] \ar @{-} [u] \ar @{} [ul]|{\dots}
        & *={} \ar @{-} [l] \ar @{-} [u] \ar @{} [ul]|{\hat b_i}
        & *={} \ar @{-} [l] \ar @{-} [u] \ar @{} [ul]|{\dots}
        \\
        *={} \ar @{~~} [ru]
        \\
        *={}
        \\
        *={} \ar @{--} [uu]
        & *={} \ar @{-} [l] \ar @{~~} [rrruuu]
        \\
        *={} \ar @{-} [u]
        & *={} \ar @{-} [l] \ar @{-} [u] \ar @{} [ul]|{1}
      }\endxy 
      ,
      \xy<0cm,1.5cm>\xymatrix @R .5cm @C .5cm{
        *={}
        & *={} \ar @{-} [l]
        & *={}
        & *={}
        & *={}
        & *={} \ar @{--} [llll]
        \\
        *={} \ar @{-} [u]
        & *={} \ar @{-} [l] \ar @{-} [u] \ar @{} [ul]|{b_i}
        \\
        *={}
        \\
        *={}
        \\
        *={} \ar @{--} [uuuu]
        & *={} \ar @{--} [l] \ar @{~~} [rrrruuuu]
      }\endxy\
    \right).
    \]
		Note that $\shape(\tt_i',\uu',\vv')=(\lambda,\mu)$. 
	We have $m_{(\tt,\uu,\vv)}\cdot e = \sum_i \pm
    m_{(\tt_i',\uu',\vv')}$. Note the special subcase 
    if row $l$ in $\uu$ is empty. In this case $e$ acts as zero
    on $m_{(\tt,\uu,\vv)}$. Again the claim follows. 
\end{itemize}

\end{proof}


\section{A basis adapted to the annihilator}\label{section:annihilatorbasis}

In this section, we define another basis of the walled Brauer
algebra. Again, this basis is indexed by tuples of standard
triples. However, if $x$ is specialized to $n$, then a subset of this
basis is a basis for the annihilator on the mixed tensor space. 

From Corollary~\ref{cor:rank} we see that $\max(Y)$ for a path $Y$
plays an important role in a combinatorial description for a
basis of the annihilator.  
\begin{defn}
  Let  $(\tt,\uu,\vv)$ be a standard triple of shape $(\lambda,\mu)$.
  For $i=0,\ldots, s$ let $u_i$ be the number of entries in the first
  row of $\uu$ which are $> i$ and let $v_i$ be the number
  of entries in the first
  row of $\vv$ which are $\leq i$. Let $m_i=\lambda_1+u_i+v_i$.  
 Then let $\max (\tt,\uu,\vv)$ be the maximum of
 $\{m_0,\ldots,m_{s}\}$.

\end{defn}

If $Y$ is a path and  $(\tt,\uu,\vv)$ is the corresponding standard
triple, then $\max(Y)= \max (\tt,\uu,\vv)$. 

\begin{lemma}\label{lemma:construction_of_basis}
\begin{enumerate}
\item
  Let $R=\mathbb{Z}[x]$. 
  Let  $(\tt,\uu,\vv)$
  be  a standard triple of shape
  $(\lambda,\mu)$ with $\lambda\vdash (r-k)$ and
  $\mu\vdash (s-k)$. 
  Then  for
  each standard triple $(\tilde\tt,\tilde\uu,\tilde\vv)$  of shape
  $(\lambda,\mu)$ there exists a coefficient 
  $r_{(\tilde\tt,\tilde\uu,\tilde\vv)}\in R$
  such that
	\begin{itemize}
	\item
  $r_{(\tilde\tt,\tilde\uu,\tilde\vv)}\neq 0$ only if
  $\ss\rhd^{\op} \tilde\ss$ (defined as above) or $(\tilde\tt,\tilde\uu,\tilde\vv)=(\tt,\uu,\vv)$, 
	\item
  $r_{(\tt,\uu,\vv)}=1$,  
	\item
	if  $(\tt',\uu',\vv')$ is a standard triple of shape
  $(\lambda,\mu)$ then there exists  an element $b\in
  B^{\rhd(\lambda,\mu)}$ such that
  \begin{equation}\label{equation:annihilator}
    \sum  r_{(\tilde\tt,\tilde\uu,\tilde\vv)}
    m_{(\tt',\uu',\vv'),(\tilde\tt,\tilde\uu,\tilde\vv)}+b
  \end{equation}
  can be written as a linear combination of elements each of which
  involves $y_{(\max(\tt,\uu,\vv))}$. 
	\end{itemize}
  Note that the coefficients depend on $(\tt,\uu,\vv)$ but not on
  $(\tt',\uu',\vv')$. We write
  $r_{(\tilde\tt,\tilde\uu,\tilde\vv)}^{(\tt,\uu,\vv)}$ instead of 
  $r_{(\tilde\tt,\tilde\uu,\tilde\vv)}$
   to emphasize
  that the coefficients depend on $(\tt,\uu,\vv)$.

\item 
  If $R$ is arbitrary, then the element in
  \eqref{equation:annihilator} is defined by specializing
  coefficients. 
  In particular, if
  $x=n=\mathrm{rank}(V)<\max(\tt,\uu,\vv)$
  then this element is an element  of the annihilator of
  the walled Brauer algebra on mixed tensor space. 
\end{enumerate}
\end{lemma}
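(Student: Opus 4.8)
The plan is to build the coefficients $r^{(\tt,\uu,\vv)}_{(\tilde\tt,\tilde\uu,\tilde\vv)}$ by a descending induction on the filling tableau $\ss$ with respect to the order $\rhd^{\op}$, exploiting the basic fact recorded in Remark~\ref{remark:action_of_the_walled_Brauer_algebra}: if $n'>n$ then $y_{(n')}$ acts as zero, so any element of $B_{r,s}(x)$ whose diagram presentation can be rewritten (for $x=n$) as a linear combination of diagrams each containing a box $y_{(n')}$ with $n'>n$ lies in the annihilator. The key observation is that in $m_{(\tt',\uu',\vv'),(\tt,\uu,\vv)}$, depicted as in Figure~\ref{figure:basiselement}, the $\lambda_1$ cells in the first row of $y_\nu$ (more precisely of $y_\lambda$), together with the $u_i$ strands from the first row of $\uu$ ending at bottom vertices labelled $>i$ on the left and the $v_i$ strands from the first row of $\vv$ ending at bottom vertices labelled $\le i$, can all be routed into a single $y$-box only if we first \emph{straighten} the diagram appropriately. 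First I would fix $i$ achieving $\max(\tt,\uu,\vv)=m_i$ and isolate, inside $m_{(\tt',\uu',\vv'),(\tt,\uu,\vv)}$, the sub-configuration responsible for these $m_i$ strands.

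Next I would use the standard symmetric-group identities relating the $y$-idempotents: on the one hand $y_\nu = y_\tau z$ with $z$ an alternating sum of coset representatives (as already used in Section~\ref{section:basis}); on the other, the branching rule $y_{(k)} = \sum_d \pm\, y_{(k-1)} d$ that appears in the second and third cases of the proof of Theorem~\ref{thm:restriction}. Iterating these, the first rows of $y_\lambda$ and $y_\mu$ can be merged with the relevant $\uu$- and $\vv$-strands into a single factor $y_{(m_i)}$ at the cost of introducing a sum over row-standard triples $(\tilde\tt,\tilde\uu,\tilde\vv)$; by Lemma~\ref{lemma:straightening} and the description of $\rhd^{\op}$ in Remark~\ref{remark:straightening}, each such triple appearing with nonzero coefficient either equals $(\tt,\uu,\vv)$ (coefficient normalised to $1$ by taking the `diagonal' coset representative) or has strictly larger filling $\tilde\ss$ in the $\op$-order, \emph{plus} a correction term $b$ that, having strictly more horizontal edges or a strictly dominating partition shape, lies in $B^{\rhd(\lambda,\mu)}$. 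This is exactly the shape of \eqref{equation:annihilator}, and by construction the whole expression is rewritten as a linear combination of diagrams each containing a box $y_{(m_i)}=y_{(\max(\tt,\uu,\vv))}$.

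For part~(2) there is nothing to prove beyond specialisation: the identity in $\mathbb{Z}[x]$ from part~(1) specialises along $\mathbb{Z}[x]\to R$, $x\mapsto n$, and if $n<\max(\tt,\uu,\vv)=m_i$ then $y_{(m_i)}$ acts as zero on $V^{\otimes r}\otimes {V^*}^{\otimes s}$ by Proposition~\ref{prop:haerterich} (equivalently Remark~\ref{remark:action_of_the_walled_Brauer_algebra}), so by the Remark following the example in Section~\ref{section:preliminaries} (``if a diagram contains a box inducing the zero map, then this diagram itself induces the zero map'') every term of \eqref{equation:annihilator} annihilates mixed tensor space, hence so does the element itself.

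The main obstacle is the bookkeeping in the middle step: one must check that when the first rows of $y_\lambda$ and $y_\mu$ are fused with the selected $\uu$- and $\vv$-strands, the \emph{other} rows and the non-selected strands reorganise only into triples that are $\rhd^{\op}$-larger (or equal), and that all genuinely ``new'' terms—those where a strand forced out of the first row lands in a lower row, or where a closed cycle is removed producing a factor like $(x-k)$ as in the first case of Theorem~\ref{thm:restriction}—can be absorbed either into the $b\in B^{\rhd(\lambda,\mu)}$ part or into a diagram still carrying a $y_{(\ge m_i)}$ box. Making the ``only if $\ss\rhd^{\op}\tilde\ss$'' triangularity precise, so that the system \eqref{equation:annihilator} is genuinely unitriangular and the coefficients are well defined over $\mathbb{Z}[x]$, is where the real work lies; the rest is an application of results already established.
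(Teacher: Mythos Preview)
Your approach is essentially the paper's, but you have made it harder than it needs to be. The argument is a single direct construction, not an induction on $\ss$, and your ``main obstacle'' paragraph anticipates difficulties that do not arise.

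Work with $m_{(\tt,\uu,\vv)}$ only (the top half of the diagram is untouched; one premultiplies by $b_{(\tt',\uu',\vv')}^{*}$ at the very end). Fix $i_{0}$ with $m_{i_{0}}=\max(\tt,\uu,\vv)$ and set $u=u_{i_{0}}$, $v=v_{i_{0}}$, so $\max=\lambda_{1}+u+v$. Since $\lambda_{1}+u\le\nu_{1}$ and $v\le\mu_{1}$, one factors $y_{\nu}$ and $y_{\mu}$ by coset representatives to pull out $y_{(\lambda_{1}+u)}$ from the first row of $y_{\nu}$ and $y_{(v)}$ from the first row of $y_{\mu}$. Now use the single coset identity
\[
\sum_{d}\sgn{d}\,\bigl(y_{(\lambda_{1}+u)}\otimes y_{(v)}\bigr)d \;=\; y_{(\max)},
\]
the sum over right coset representatives of $\mathfrak{S}_{\lambda_{1}+u}\times\mathfrak{S}_{v}$ in $\mathfrak{S}_{\max}$ (containing $\mathrm{id}$). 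Let $m_{d}$ be $m_{(\tt,\uu,\vv)}$ with $d$ inserted below these two boxes. Then $\sum_{d}\sgn{d}\,m_{d}$ is already the element involving $y_{(\max)}$, and $m_{\mathrm{id}}=m_{(\tt,\uu,\vv)}$, which gives $r_{(\tt,\uu,\vv)}=1$.

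For $d\ne\mathrm{id}$ there are only two cases, and neither produces $(x-k)$ factors nor disturbs lower rows. If $d$ keeps the first $\lambda_{1}$ strands coming from $d^{\op}(\oo)^{*}$ inside the $y_{(\lambda_{1}+u)}$ box, then $m_{d}=\pm\, m_{(\tt,\tilde\uu,\tilde\vv)}$ where $\tilde\uu,\tilde\vv$ arise from $\uu,\vv$ by permuting some first-row entries of $\uu$ (all $>i_{0}$) with some first-row entries of $\vv$ (all $\le i_{0}$); this immediately forces $\ss\rhd^{\op}\tilde\ss$. If $(\tt,\tilde\uu,\tilde\vv)$ is not standard, Lemma~\ref{lemma:straightening} and Remark~\ref{remark:straightening} straighten it while preserving $\ss\rhd^{\op}\tilde\ss$. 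If instead $d$ sends one of those $\lambda_{1}$ strands into the $y_{(v)}$ box, that strand becomes an extra horizontal edge, so $b_{(\tt',\uu',\vv')}^{*}m_{d}\in B^{\rhd(\lambda,\mu)}$ and is absorbed into $b$.

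So no iterated branching is needed, the coefficients are read off directly from the $\sgn{d}$, and the unitriangularity is immediate from the single case split above. Your part~(2) is exactly right.
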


\begin{proof}
  The second part follows by
  Remark~\ref{remark:action_of_the_walled_Brauer_algebra}, so let
  $R=\mathbb{Z}[x]$. Since the upper part of the diagrams is not involved in the calculations, we show a similar  result for
  $m_{(\tt,\uu,\vv)}$ instead of
  $m_{(\tt',\uu',\vv'),(\tt,\uu,\vv)}$, then the lemma 
  follows by premultiplying $b_{(\tt',\uu',\vv')}^*$ . 
 We write $\max$   instead of $\max (\tt,\uu,\vv)$ and choose $i_0$
 such that  $m_{i_0}= \max$. 

  Let $u=u_{i_0}$ and $v=v_{i_0}$. Then
  we have $\max ={\lambda_1+u}+v$. Note that ${\lambda_1+u}\leq \nu_1$
  and $v\leq \mu_1$.  
  By choosing coset representatives we get
  $z_1\in R\mathfrak{S}_r$ and $z_2\in R\mathfrak{S}_{s-k}$ such that
  
  \[
  \wtangle{annihilator.3}{2.5cm}= \wtangle{annihilator.4}{2.5cm}
  \quad\text{ and }\quad
  \wtangle{annihilator.6}{2.5cm} = \wtangle{annihilator.5}{2.5cm} 
  \]

  Thus we have
  \[
  m_{(\tt,\uu,\vv)}=
  \htangle{annihilator.1}{4cm} =\htangle{annihilator.2}{4cm}
  \]
  There are ${\lambda_1+u}$ edges ending in $y_{({\lambda_1+u})}$,
  $\lambda_1$ of these  edges
  start at the first $\lambda_1$ upper vertices on the left, the
  other edges start at the vertices on the   right in the bottom row
  namely at the vertices numbered by the first $u$ entries in the 
  first row of $\uu$.

  Note that
  \[
  \wtangle{annihilator.8}{2cm} \;=\;\sum_d\sgn{d}\;\wtangle{annihilator.7}{2cm}
  \]
  where the sum runs over a set of right coset representatives of
  $\mathfrak{S}_{\lambda_1+u}\times \mathfrak{S}_v$ in $\mathfrak{S}_{\max}$ containing the identity $\mathrm{id}$. Let
  $m_d$ be the element obtained from $
  m_{(\tt,\uu,\vv)}$ by adding a box filled with the permutation $d$
  below the boxes $y_{({\lambda_1+u})}$ and  $y_{(v)}$ in the right
  diagram.
  Then $\sum_d\sgn{d} m_d$ contains
  $y_{(\max)}$. Certainly if $d=\mathrm{id}$, then
  $m_d=m_{(\tt,\uu,\vv)}$.

  Suppose first, that $d$ connects the first $\lambda_1$ edges coming from
  $d^{\op}(\oo)^*$ to the box $y_{{\lambda_1+u}}$.
  Then $m_d$ is up to a sign equal to
  $m_{(\tilde\tt,\tilde\uu,\tilde\vv)}$ where $\tilde\tt=\tt$ and
  $\tilde\uu$ and $\tilde \vv$ 
  are obtained from $\uu$ and $\vv$ by a permutation on the set of  the first $u$
  entries in the first row of $\uu$ together with the
  first $v$ entries in the 
  first row of $\vv$. Since these entries in $\uu$ are
  $> i_0$ and those in $\vv$ are $\leq i_0$
  we have $\ss\rhd^{\op}\tilde\ss$ for $d\neq\mathrm{id}$. If
  $(\tilde\tt,\tilde\uu,\tilde\vv)$ is not yet standard then one can
  rewrite it as a linear combination of standard elements still
  satisfying  $\ss\rhd^{\op}\tilde\ss$
  (c.~f.~Remark~\ref{remark:straightening}) and strictly dominating terms.  

  Suppose now that  $d$ connects one of the first $\lambda_1$ edges coming from
  $d^{\op}(\oo)^*$ to the box $y_{(v)}$. Then this edge is an additional
  horizontal edge and we have $b_{(\tt',\uu',\vv')}^*m_d\in B^{\rhd(\lambda,\mu)}$.  

\end{proof}

\begin{defn}
  Lemma~\ref{lemma:construction_of_basis} shows that for each pair
  $(\tt,\uu,\vv)$ and $(\tt',\uu',\vv')$
  of standard triples there is an element $b\in   B^{\rhd(\lambda,\mu)}$, such
  that
  \begin{equation}\label{equation:construction_of_basis}
    \sum_{(\tilde\tt,\tilde\uu,\tilde\vv), (\tilde{\tilde\tt},\tilde{\tilde\uu},\tilde{\tilde\vv})} 
    r_{(\tilde{\tilde\tt},\tilde{\tilde\uu},\tilde{\tilde\vv})}^{(\tt',\uu',\vv')}
    r_{(\tilde\tt,\tilde\uu,\tilde\vv)}^{(\tt,\uu,\vv)}
    m_{(\tilde{\tilde\tt},\tilde{\tilde\uu},\tilde{\tilde\vv}),(\tilde\tt,\tilde\uu,\tilde\vv)}+b
  \end{equation}
  involves $y_{(\max(\tt,\uu,\vv))}$. Similarly, $b$ can be chosen
  such that the element in \eqref{equation:construction_of_basis}
  involves  $y_{(\max(\tt',\uu',\vv'))}$.

  For each pair of standard triples let the element
  $c_{(\tt',\uu',\vv'),(\tt,\uu,\vv)}$ 
  be one of these two elements such that
  $c_{(\tt',\uu',\vv'),(\tt,\uu,\vv)}$ involves $y_{(\max)}$ where
  $\max$ is the maximum of $ \max(\tt,\uu,\vv)$ and
  $\max(\tt',\uu',\vv')$. Note that this definition then makes sense for
  all commutative rings $R$ with one.  
\end{defn}

\begin{thm}\label{main_thm:weakly_cellular_basis}
  \begin{enumerate}
  \item Let $R$ be a commutative ring with one. Then 
	\[\{c_{(\tt',\uu',\vv'),(\tt,\uu,\vv)}\mid 
    (\tt',\uu',\vv'),(\tt,\uu,\vv)\in M(\lambda,\mu),(\lambda,\mu)\in
    \Lambda(r,s)\}\] is a weakly cellular
    basis of the $R$-algebra $B_{r,s}(x)$ with anti-involution $*$.
  \item If $R$ is such that $x=n$, then the annihilator of
    $B_{r,s}(n)$ on mixed tensor space is free with basis 
    \[
    \left\{c_{(\tt',\uu',\vv'),(\tt,\uu,\vv)}\mid 
    \begin{array}{l}
      (\tt',\uu',\vv'),(\tt,\uu,\vv)\in M(\lambda,\mu),(\lambda,\mu)\in
      \Lambda(r,s),\\
      \max(\tt,\uu,\vv)\text{ or }\max(\tt',\uu',\vv')>n
    \end{array} \right\}.
    \]
  \item For $x=n$ let $\mathrm{ann}= \mathrm{ann}_{B_{r,s}(n)}(V^{\otimes r}\otimes
    {V^*}^{\otimes s})$ be the annihilator of the
    walled Brauer algebra on mixed tensor space and let 
    $\overline{c}_{(\tt',\uu',\vv'),(\tt,\uu,\vv)}\in
    B_{r,s}(n)/\mathrm{ann}$
    be the coset of
    $c_{(\tt',\uu',\vv'),(\tt,\uu,\vv)}$ modulo the annihilator. Then
    \[
    \left\{ \overline{c}_{(\tt',\uu',\vv'),(\tt,\uu,\vv)}\mid 
    \begin{array}{l}
      (\tt',\uu',\vv'),(\tt,\uu,\vv)\in M(\lambda,\mu),(\lambda,\mu)\in
      \Lambda(r,s),\\
      \max(\tt,\uu,\vv),\max(\tt',\uu',\vv')\leq n
    \end{array} \right\}
    \] is a weakly cellular
    basis of the $R$-algebra
    $B_{r,s}(x)/\mathrm{ann}
    \cong\mathrm{End}_{U}(V^{\otimes r}\otimes {V^*}^{\otimes s})$. 
   \end{enumerate}
 \end{thm}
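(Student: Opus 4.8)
The plan is to derive all three parts directly from the results already in place, since the only substantial computation --- the straightening of $m_{(\tt,\uu,\vv)}$ into an element involving $y_{(\max(\tt,\uu,\vv))}$ with controlled coefficients --- has been carried out in Lemma~\ref{lemma:construction_of_basis}. Throughout I would write $S,T$ for standard triples, $\ss_T$ for the tableau $\ss$ attached to $T$, and $P^{(\lambda,\mu)}=(r^{T}_{\tilde T})_{T,\tilde T\in M(\lambda,\mu)}$ for the coefficient matrix of Lemma~\ref{lemma:construction_of_basis}. First I would record that, by the definition preceding the theorem, for $S,T\in M(\lambda,\mu)$ one has $c_{S,T}=\sum_{\tilde S,\tilde T\in M(\lambda,\mu)} r^{S}_{\tilde S}r^{T}_{\tilde T}\,m_{\tilde S,\tilde T}+b$ with $b\in B^{\rhd(\lambda,\mu)}$. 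By Lemma~\ref{lemma:construction_of_basis} the matrix $P^{(\lambda,\mu)}$ is unipotent and triangular for any linear refinement of the order $\ss_T\unrhd^{\op}\ss_{\tilde T}$, hence invertible over $R$; since $b$ involves only $m$'s of strictly dominating shape, ordering $\Lambda(r,s)$ by $\unrhd$ makes the transition matrix from the cellular basis of Theorem~\ref{thm:basis} to $\{c_{S,T}\}$ block unitriangular. Thus $\{c_{S,T}\}$ is an $R$-basis of $B_{r,s}(x)$ and $B^{\rhd(\lambda,\mu)}$ is equally the $R$-span of the $c$'s of strictly dominating shape. Applying $*$ to the identity for $c_{S,T}$ and using $m^{*}_{S,T}=m_{T,S}$ together with the fact that $B^{\rhd(\lambda,\mu)}$, being the $R$-span of the $m_{S',T'}$ of strictly dominating shape (and $m^{*}_{S',T'}=m_{T',S'}$), is $*$-stable, gives $c^{*}_{S,T}\equiv c_{T,S}\bmod B^{\rhd(\lambda,\mu)}$, i.e.\ C1'; and since modulo $B^{\rhd(\lambda,\mu)}$ the $c_{S,T}$ arise from the $m_{S,T}$ by applying $P^{(\lambda,\mu)}$ in both tableau slots, the multiplication rule C2 of Theorem~\ref{thm:basis} transports to $\{c_{S,T}\}$ with structure constants $P^{(\lambda,\mu)}R_a(P^{(\lambda,\mu)})^{-1}$, where $R_a$ is the matrix of structure constants of the $m$-basis; these depend only on $a$ and on $T,T'$. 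This proves part~(1), with poset $(\Lambda(r,s),\unrhd)$ and index sets $M(\lambda,\mu)$.

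For part~(2), let $A_{1}$ be the set of index pairs $(S,T)$ with $\max(S)>n$ or $\max(T)>n$, and $A_{2}$ its complement (here $\max(\cdot)$ abbreviates $\max(\tt,\uu,\vv)$). If $(S,T)\in A_{1}$, then $c_{S,T}$ was constructed as an element involving $y_{(M)}$ for $M=\max\{\max(S),\max(T)\}>n$, hence acts as zero on $V^{\otimes r}\otimes{V^*}^{\otimes s}$ by Remark~\ref{remark:action_of_the_walled_Brauer_algebra}; so $\langle c_{S,T}:(S,T)\in A_{1}\rangle\subseteq\mathrm{ann}$, where $\mathrm{ann}=\mathrm{ann}_{B_{r,s}(n)}(V^{\otimes r}\otimes{V^*}^{\otimes s})$. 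For the reverse inclusion I would use that $B_{r,s}(n)/\mathrm{ann}\cong\mathrm{End}_{U}(V^{\otimes r}\otimes{V^*}^{\otimes s})$ is $R$-free of rank $|A_{2}|$, by Corollary~\ref{cor:rank}(1) and Schur--Weyl duality (Theorem~\ref{theorem:Schur-Weyl-duality}): the images of the $A_{1}$-part of the basis vanish in the quotient, so the images of the $A_{2}$-part are $|A_{2}|$ generators of a free module of rank $|A_{2}|$ and therefore a basis, in particular $R$-linearly independent; consequently every element of $\mathrm{ann}$ has vanishing $A_{2}$-coefficients and so lies in $\langle c_{S,T}:(S,T)\in A_{1}\rangle$. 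This gives the asserted basis of $\mathrm{ann}$; $R$-freeness of $\mathrm{ann}$ is Proposition~\ref{prop:dipperdotystoll}.

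Part~(3) is then formal. By (1) and (2), $B_{r,s}(n)=\mathrm{ann}\oplus\langle c_{S,T}:(S,T)\in A_{2}\rangle$, so the cosets $\overline{c}_{S,T}$ with $(S,T)\in A_{2}$ form an $R$-basis of $B_{r,s}(n)/\mathrm{ann}$ and $\overline{c}_{S,T}=0$ exactly when $(S,T)\in A_{1}$. The ideal $\mathrm{ann}$ is $*$-stable because the endomorphism of $V^{\otimes r}\otimes{V^*}^{\otimes s}$ induced by the horizontal reflection of a diagram is the transpose of the one induced by the diagram, so the kernel of the representation is $*$-stable; hence $*$ descends to $B_{r,s}(n)/\mathrm{ann}$. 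Reducing the C1'- and C2-identities of part~(1) modulo $\mathrm{ann}$, and noting that the image of $B^{\rhd(\lambda,\mu)}$ is the span of the $\overline{c}$'s of strictly dominating shape, yields C1' and C2 for $B_{r,s}(n)/\mathrm{ann}$ with poset $\{(\lambda,\mu)\in\Lambda(r,s):\ M(\lambda,\mu)\ \text{contains a triple}\ (\tt,\uu,\vv)\ \text{with}\ \max(\tt,\uu,\vv)\le n\}$ ordered by $\unrhd$ and index sets $\{(\tt,\uu,\vv)\in M(\lambda,\mu):\max(\tt,\uu,\vv)\le n\}$. The identification with $\mathrm{End}_{U}(V^{\otimes r}\otimes{V^*}^{\otimes s})$ is Schur--Weyl duality once more.

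The one genuinely delicate point inside the proof of the theorem is the passage from ``$\langle c_{S,T}:(S,T)\in A_{1}\rangle\subseteq\mathrm{ann}$ of the same $R$-rank'' to equality in part~(2): over an arbitrary commutative ring a bare rank comparison does not suffice, and the argument must instead pass through the free quotient $B_{r,s}(n)/\mathrm{ann}$ and the standard fact that a generating set of a free module whose cardinality equals its rank is automatically a basis. Everything else --- the triangularity bookkeeping in part~(1) and the reductions modulo $\mathrm{ann}$ in part~(3) --- is routine once Lemma~\ref{lemma:construction_of_basis} is available, and indeed Lemma~\ref{lemma:construction_of_basis} is where the real content of this section resides.
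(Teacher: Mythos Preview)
Your proof is correct and follows essentially the same route as the paper: unitriangularity of the base change from the $m$-basis to the $c$-basis (via Lemma~\ref{lemma:construction_of_basis}) gives part~(1), containment plus a rank/freeness count gives part~(2), and part~(3) is formal. The paper's proof is terser --- it just says ``the basis has the right cardinality and the annihilator is $R$-free with $R$-free complement'' for part~(2) --- whereas you correctly spell out the one nontrivial step, namely that over a general commutative ring one passes through the free quotient $B_{r,s}(n)/\mathrm{ann}$ and uses that $|A_2|$ generators of a free module of rank $|A_2|$ are a basis; you also make the $*$-stability of $\mathrm{ann}$ explicit for part~(3), which the paper omits.
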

\begin{proof}
  The base change matrix between the two bases
  $\{c_{(\tt',\uu',\vv'),(\tt,\uu,\vv)}\}$ and  $\{m_{(\tt',\uu',\vv'),(\tt,\uu,\vv)}\}$
  is uni-triangular since
  $r_{(\tilde\tt,\tilde\uu,\tilde\vv)}^{(\tt,\uu,\vv)}\neq 0$ implies
  that $\ss\rhd^{\op} \tilde\ss$ or $(\tilde\tt,\tilde\uu,\tilde\vv)=(\tt,\uu,\vv)$  and
  $r_{(\tt,\uu,\vv)}^{(\tt,\uu,\vv)}=1$. Thus
  $\{c_{(\tt',\uu',\vv'),(\tt,\uu,\vv)}\}$ is in fact a basis.  Since 
  $c_{(\tt',\uu',\vv'),(\tt,\uu,\vv)}^*-c_{(\tt,\uu,\vv),(\tt',\uu',\vv')}\in
  B^{\rhd(\lambda,\mu)}$ by
  Equation~\eqref{equation:construction_of_basis}, Condition~C1' is
  satisfied. Condition~C2 can be easily verified. 

  The second part then follows
  since the basis elements are in fact elements of the annihilator,
  the basis has the right cardinality and the annihilator is $R$-free
  with $R$-free complement.
  The third part is a direct consequence.

\end{proof}
\begin{exmp}
	Recall the example in Figure  from the introduction for $r=s=2$. The elements of $\Lambda(2,2)$ are $((2),(2))$, $((1^2),(2))$, $((2),(1^2))$, $((1^2),(1^2))$ (for $k=0$), $((1),(1))$ (for $k=1$) and $(\emptyset,\emptyset)$ for ($k=2$), from top to bottom. The reader may list all triples of this shape and the corresponding maximum to obtain the diagrams in the figure.
\end{exmp}

As a direct consequence of
Theorem~\ref{main_thm:weakly_cellular_basis} we get
 \begin{thm}\label{theorem:cellular_basis_endom}
   Let $\overline{m}_{(\tt',\uu',\vv'),(\tt,\uu,\vv)}$
   be the coset of $m_{(\tt',\uu',\vv'),(\tt,\uu,\vv)}$ modulo $\mathrm{ann}$. Then 
   \[
   \left\{ \overline{m}_{(\tt',\uu',\vv'),(\tt,\uu,\vv)}\mid 
     \begin{array}{l}
       (\tt',\uu',\vv'),(\tt,\uu,\vv)\in M(\lambda,\mu),(\lambda,\mu)\in
       \Lambda(r,s),\\
       \max(\tt,\uu,\vv),\max(\tt',\uu',\vv')\leq n
     \end{array} \right\}
   \] is a  cellular
   basis of $B_{r,s}(x)/\mathrm{ann}
   \cong\mathrm{End}_{U}(V^{\otimes r}\otimes {V^*}^{\otimes s})$ with
   anti-involution induced by $*$.   
 \end{thm}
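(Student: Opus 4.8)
The plan is to deduce everything from Theorem~\ref{main_thm:weakly_cellular_basis} by comparing the two bases $\{c_{(\tt',\uu',\vv'),(\tt,\uu,\vv)}\}$ and $\{m_{(\tt',\uu',\vv'),(\tt,\uu,\vv)}\}$ of $B_{r,s}(x)$ and passing to the quotient. Recall from the construction in Section~\ref{section:annihilatorbasis} that, for standard triples of shape $(\lambda,\mu)$,
\[
c_{(\tt',\uu',\vv'),(\tt,\uu,\vv)}
= m_{(\tt',\uu',\vv'),(\tt,\uu,\vv)}
+ \sum_{(\tilde\tt',\tilde\uu',\tilde\vv'),\,(\tilde\tt,\tilde\uu,\tilde\vv)}
r_{(\tilde\tt',\tilde\uu',\tilde\vv')}^{(\tt',\uu',\vv')}\,
r_{(\tilde\tt,\tilde\uu,\tilde\vv)}^{(\tt,\uu,\vv)}\;
m_{(\tilde\tt',\tilde\uu',\tilde\vv'),(\tilde\tt,\tilde\uu,\tilde\vv)}
+ b,
\]
the sum running over pairs of standard triples of shape $(\lambda,\mu)$ other than the leading one, where $r_{(\tilde\tt,\tilde\uu,\tilde\vv)}^{(\tt,\uu,\vv)}$ vanishes unless $\tilde\ss\lhd^{\op}\ss$ (Lemma~\ref{lemma:construction_of_basis} together with Remark~\ref{remark:straightening}, which guarantees that straightening the non\-standard triples produced in that proof preserves $\ss$ or passes to a strictly dominating shape), and $b\in B^{\rhd(\lambda,\mu)}$. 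Introduce the partial order $\prec$ on pairs of standard triples: $(R',R)\prec(P',P)$ iff the shape of $(R',R)$ strictly dominates that of $(P',P)$ in $\Lambda(r,s)$, or the shapes agree and $(\ss_{R'},\ss_R)$ is strictly smaller than $(\ss_{P'},\ss_P)$ in the product of the two $\rhd^{\op}$\-orders. Then the displayed identity says precisely that the base change from $\{m\}$ to $\{c\}$ is unitriangular with respect to $\prec$, and — since the $r^{(\cdot)}$ factors depend only on the respective triple — that inside each cell layer $B^{\trianglerighteq(\lambda,\mu)}/B^{\rhd(\lambda,\mu)}$ it is the tensor product of a unitriangular base change on the left indices with one on the right indices.

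The easy parts are disposed of first. Since $m_{(\tt',\uu',\vv'),(\tt,\uu,\vv)}^*=m_{(\tt,\uu,\vv),(\tt',\uu',\vv')}$ holds exactly (Theorem~\ref{thm:basis}), the same holds for the cosets $\overline m_{(\tt',\uu',\vv'),(\tt,\uu,\vv)}$, so once these are shown to form a basis, C1 (not merely C1$'$) is satisfied — this is the gain over the $c$\-basis. The isomorphism $B_{r,s}(x)/\mathrm{ann}\cong\mathrm{End}_{U}(V^{\otimes r}\otimes {V^*}^{\otimes s})$ is Theorem~\ref{theorem:Schur-Weyl-duality} and its integral version.

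The substance is to show that $\{\overline m_{(\tt',\uu',\vv'),(\tt,\uu,\vv)}\mid \max(\tt',\uu',\vv'),\max(\tt,\uu,\vv)\le n\}$ is a basis of $B_{r,s}(n)/\mathrm{ann}$. By Theorem~\ref{main_thm:weakly_cellular_basis}(2), $\overline c_{(\tt',\uu',\vv'),(\tt,\uu,\vv)}=0$ whenever $\max(\tt',\uu',\vv')>n$ or $\max(\tt,\uu,\vv)>n$. Combining this with the unitriangular identity and inducting along the finite order $\prec$ shows that every such ``large'' $\overline m_{(\tt',\uu',\vv'),(\tt,\uu,\vv)}$ is an $R$\-combination of ``small'' ones: the identity rewrites it through $\overline m_{(R',R)}$ with $(R',R)\prec(\tt',\uu',\vv'),(\tt,\uu,\vv)$, and each such $(R',R)$ is already small or again large and handled by induction. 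As the full family $\{\overline m\}$ spans $B_{r,s}(n)/\mathrm{ann}$, so does the small subfamily; and since it has the same cardinality as the basis $\{\overline c\mid\max\le n\}$ of Theorem~\ref{main_thm:weakly_cellular_basis}(3) — both are indexed by pairs of standard triples of the same shape with both maxima $\le n$, whose number is the rank of $\mathrm{End}_{U}(V^{\otimes r}\otimes {V^*}^{\otimes s})$ by Proposition~\ref{prop:stembridge} and Corollary~\ref{cor:rank} — a spanning set of the right size of a finitely generated free module over a commutative ring, it is a basis. The same induction, applied inside a single cell layer, identifies the image of $B^{\rhd(\lambda,\mu)}$ in the quotient with the span of the small $\overline m$'s of strictly dominating shape, i.e.\ with the cell ideal $(B_{r,s}(n)/\mathrm{ann})(\rhd(\lambda,\mu))$.

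Finally C2: reducing modulo $\mathrm{ann}$ the congruence $m_{(\tt',\uu',\vv'),(\tt,\uu,\vv)}\,b\equiv\sum r_b\,m_{(\tt',\uu',\vv'),(\tilde\tt,\tilde\uu,\tilde\vv)}\pmod{B^{\rhd(\lambda,\mu)}}$ coming from the cellular structure of $B_{r,s}(x)$ (Theorem~\ref{thm:basis}) gives the analogous statement for the $\overline m$'s, except that some $\overline m_{(\tt',\uu',\vv'),(\tilde\tt,\tilde\uu,\tilde\vv)}$ of large maximum have to be re\-expanded. Here one uses that, inside each cell layer of $B_{r,s}(n)/\mathrm{ann}$, the index set $\{\max\le n\}$ is a product of a set of ``left'' with a set of ``right'' standard triples, and the $c/m$ base change restricted to it is still a tensor product of invertible (principal unitriangular) matrices; consequently right multiplication by $b\in\tilde B$, which by C2 for the $\overline c$\-basis acts only on the right tensor factor, does the same on the $\overline m$\-basis, so the left index is preserved and C2 holds. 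The main obstacle is exactly the bookkeeping behind the displayed identity and the $\prec$\-induction: one must be certain that every $m$ occurring in $c_{(\tt',\uu',\vv'),(\tt,\uu,\vv)}$ other than the leading term is strictly $\prec$\-smaller — the intra\-cell terms via the $\ss$\-domination of Lemma~\ref{lemma:construction_of_basis} and Remark~\ref{remark:straightening}, and the error term because $b\in B^{\rhd(\lambda,\mu)}$ lies in the span of the $m$'s of strictly dominating shape by the cell chain of Theorem~\ref{thm:basis}. Once this is secured, the induction and the elementary commutative\-algebra facts close the argument with no diagram computation.
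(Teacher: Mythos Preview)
Your argument is correct and rests on the same idea as the paper: the base change from $\{m_{(\tt',\uu',\vv'),(\tt,\uu,\vv)}\}$ to $\{c_{(\tt',\uu',\vv'),(\tt,\uu,\vv)}\}$ is unitriangular within each cell layer (with the tensor-product structure coming from Lemma~\ref{lemma:construction_of_basis}), and the ``large'' $c$'s span the annihilator by Theorem~\ref{main_thm:weakly_cellular_basis}(2).

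The paper packages the same computation more compactly. Instead of your $\prec$-induction to re-express each large $\overline m$ through small ones, it introduces the hybrid family
\[
\tilde c_{(\tt',\uu',\vv'),(\tt,\uu,\vv)}=
\begin{cases}
m_{(\tt',\uu',\vv'),(\tt,\uu,\vv)} & \text{if both maxima are } \leq n,\\
c_{(\tt',\uu',\vv'),(\tt,\uu,\vv)} & \text{otherwise},
\end{cases}
\]
which is again a basis of $B_{r,s}(x)$ by unitriangularity; the large part is then literally a basis of the annihilator, so the small part descends to a basis of the quotient without further work. For C2 the paper simply invokes the one-sided relation of Equation~\eqref{equation:annihilator}: when $\max(\tt,\uu,\vv)>n$ it expresses $\overline m_{(\tt',\uu',\vv'),(\tt,\uu,\vv)}$ modulo the cell ideal through $\overline m_{(\tt',\uu',\vv'),(\tilde\tt,\tilde\uu,\tilde\vv)}$ with the same left index and strictly smaller $\tilde\ss$, so an induction on $\ss$ alone suffices. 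Your route via the tensor-product form of the intra-cell base change and C2 for the $\overline c$-basis reaches the same conclusion, but the direct use of \eqref{equation:annihilator} avoids that detour.
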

\begin{proof}
  Let
  \[
  \tilde{c}_{(\tt',\uu',\vv'),(\tt,\uu,\vv)}
  =\begin{cases}
    m_{(\tt',\uu',\vv'),(\tt,\uu,\vv)}&\text{ if }
    \max(\tt,\uu,\vv),\max(\tt',\uu',\vv')\leq n\\
    c_{(\tt',\uu',\vv'),(\tt,\uu,\vv)}&\text{ otherwise}.
  \end{cases}
  \]
  Again, $\{\tilde{c}_{(\tt',\uu',\vv'),(\tt,\uu,\vv)}\}$ is a basis
  since the base change matrix is uni-triangular. A subset of this
  basis is a basis of the annihilator and thus this set is actually a basis of $B_{r,s}(x)/\mathrm{ann}$. 
  Condition~C1 is obvious, Condition~C2 follows using
  Equation~\eqref{equation:annihilator} inductively. 
 
\end{proof}

\begin{remark}
  Note that the partially ordered set corresponding to these cellular bases of  $B_{r,s}(x)/\mathrm{ann}$  is
  $\Lambda(r,s)$. The set corresponding to an element $(\lambda,\mu)$ of $\Lambda(r,s)$
  is a subset of $M(\lambda,\mu)$, namely the set of standard triples
  $(\tt,\uu,\vv)$ with
  $\max(\tt,\uu,\vv)\leq n$. We denote this set by
  $M_0(\lambda,\mu)$.  Note that this set might be empty. 
  To be precise, $M_0(\lambda,\mu)=\emptyset$ if and only
  if $\lambda_1+\mu_1>n$.   
  
  Let $\Lambda_0(r,s)$ be the set of pairs
  $(\lambda,\mu)\in\Lambda(r,s)$ such that $\lambda_1+\mu_1\leq
  n$. Then the cellular basis of $\mathrm{End}_{U}(V^{\otimes
    r}\otimes {V^*}^{\otimes s})$ is indexed by pairs of elements of
  $M_0(\lambda,\mu)$ with $(\lambda,\mu)\in\Lambda_0(r,s)$ and we can take $\Lambda_0(r,s)$ as corresponding partially ordered set. 
\end{remark}


\section{A basis and a filtration for ordinary tensor space}\label{section:ordinary_tensor_space}
Before we turn back to the walled Brauer algebra and mixed tensor space, we state some results on ordinary tensor space adjusted for our purposes. Let $m$ be a natural number, then $V^{\otimes m}$ is called the \emph{ordinary tensor space}.

\begin{defn}
	Let $\lambda$ be a partition of $m$ and $\mu$ a composition of $m$ into $n$ parts. A $\lambda$-tableau of type $\mu$ is a filling of the boxes of $[\lambda]$ with not necessarily distinct numbers such that the number of entries equal to $i$ is $\mu_i$. 	

A $\lambda$-tableau of type $\mu$ is called \emph{semi-standard} if it is row-standard and the entries
 weakly increase along the columns 
 (see Figure~\ref{figure:semistandardtableau}). Let $\Tab$ be the set of all tableaux of some type, let $\Tab(\lambda)$ be the set of $\lambda$-tableaux of some type,  and let $\Semistd(\lambda)$ denote 
 the set of semi-standard $\lambda$-tableaux of some type.
\end{defn}

\begin{figure}[h!]
  \centering
  \epsfbox{figures/youngtableaux.18}
   \caption{A semi-standard $(3,3,2,2)$-tableau of type $(2,3,1,3,1)$}\label{figure:semistandardtableau}
\end{figure}

\begin{defn}
	Let $\lambda\vdash m$ be a partition of $m$, $T\in\Tab(\lambda)$
	 and $\tt\in\tab(\lambda)$ be row-standard tableaux. Let $v_T=v_{\bf i}\in V^{\otimes m}$ where $\bf i$ is the multi-index obtained from $T$ by reading the entries row by row and let  $v_{T\tt}:=v_T y_\lambda d(\tt)\in V^{\otimes m} $.    
\end{defn}
Note that $v_{T\tt}$ is a linear combination of those $v_{\bf i}$ with coefficients $1$ or $-1$ where $\bf i$ is a multi-index such that for each row in $[\lambda]$  the numbers in $\tt$ in this row indicate the positions of the numbers in $T$ in this row. 
\begin{defn}
If $T,S\in\Tab$ are row-standard, we say that $T$ dominates $S$ ($T\trianglerighteq S$) if the shape of $T\downarrow i$ dominates the shape of $S\downarrow i$ for all $i=1,\ldots,n$. Here $T\downarrow i$ is the tableau obtained from $T$ by removing all boxes with entries greater than $i$. 
\end{defn}

The following result can be shown by similar methods as the results in \cite{murphy} where a  basis for  the Hecke 
algebra of the symmetric group with similar properties is given.  
\begin{thm}\label{theorem:Straigthening}
\begin{enumerate}
\item\label{item:basis_of_tensor_space} The set
\[\{v_{T\tt}\mid\lambda\vdash m, T\in \Semistd(\lambda),\tt\in\Std(\lambda)\}\]
is a basis of $V^{\otimes m}$.  

\item\label{item:straightening_T} Let $\lambda\vdash m$, $T\in\Tab(\lambda)$
 and $\tt\in\tab(\lambda)$  such that $T$ and $\tt$ are row-standard but 
 $T$ is not semi-standard.
 Then $v_{T\tt}$ is a linear combination of $v_{S\ss}$ where either $\ss=\tt$ and $S\triangleright T$ or $S$ and $\ss$ are tableaux of shape $\lambda'$ where $\lambda'\triangleright \lambda$.

\item\label{item:straightening_t}Let $\lambda\vdash m$, $T\in\Tab(\lambda)$
 and $\tt\in\tab(\lambda)$ with $T$ and $\tt$ row-standard, but now suppose that
 $\tt$ is not standard. Then $v_{T\tt}$ is a linear combination of $v_{S\ss}$ where either $\ss\triangleright\tt$ and $S=T$ or $S$ and $\ss$ are tableaux of shape $\lambda'$ where $\lambda'\triangleright \lambda$.
\end{enumerate}

\end{thm}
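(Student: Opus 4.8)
The plan is to reduce everything to the one-step straightening rules, parts~\ref{item:straightening_T} and~\ref{item:straightening_t}, which I would prove simultaneously by induction on the dominance order on partitions of $m$, imitating the arguments of \cite{murphy}. Granting these, the spanning half of part~\ref{item:basis_of_tensor_space} is cheap: for $\lambda=(1^m)$ one has $y_{(1^m)}=1$ and every bijective $(1^m)$-tableau is row-standard, so every $v_{\mathbf i}$ is of the form $v_{T\tt}$ with $T,\tt$ row-standard of shape $(1^m)$; hence the $v_{T\tt}$ with $T,\tt$ row-standard span $V^{\otimes m}$, and iterating parts~\ref{item:straightening_T} and~\ref{item:straightening_t} rewrites each of them as a linear combination of $v_{S\ss}$ with $S$ semi-standard and $\ss$ standard. (The iteration terminates: for fixed $\lambda$ the dominance order on row-standard $\lambda$-tableaux is a finite poset, so each application of a rule either replaces a tableau by a strictly more dominant one of the same shape, or passes to a strictly larger shape.) Finally the set in part~\ref{item:basis_of_tensor_space} has $\sum_{\lambda\vdash m}|\Semistd(\lambda)|\cdot|\Std(\lambda)|=n^m$ elements by the Robinson--Schensted--Knuth correspondence, and this equals $\operatorname{rank}_R V^{\otimes m}$; a generating set of that size for a free module of that rank is automatically a basis.

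For part~\ref{item:straightening_t} the right-hand factor $y_\lambda d(\tt)$ is exactly Murphy's element $m^\lambda_{\tt^\lambda,\tt}$, so the ``$\tt$ not standard'' case of Proposition~\ref{prop:murphy} gives $y_\lambda d(\tt)\equiv\sum_{\tt'\rhd\tt}c_{\tt'}\,y_\lambda d(\tt')$ modulo $R\mathfrak S_m(\rhd\lambda)$, with the $\tt'$ row-standard. Left-multiplying by $v_T$ turns the main terms into $\sum_{\tt'}c_{\tt'}v_{T\tt'}$, which are of the required form ($S=T$, $\ss=\tt'\rhd\tt$). It then remains to check that $v_T\cdot R\mathfrak S_m(\rhd\lambda)$ lies in the span of $v_{S'\ss'}$ of shapes $\rhd\lambda$: for a Murphy basis element $m^\kappa_{\aa,\bb}=d(\aa)^*y_\kappa d(\bb)$ with $\kappa\rhd\lambda$, the vector $v_Td(\aa)^*$ is again some $v_{\mathbf j}$, which one reads into the shape $\kappa$ to get a tableau $P$; then $v_Py_\kappa=\pm v_{P'}y_\kappa$ for the row-sorted $P'$ (or $v_Py_\kappa=0$ if some row of $P$ repeats a value), so $v_Tm^\kappa_{\aa,\bb}=\pm v_{P'\bb}$ has shape $\kappa\rhd\lambda$.

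For part~\ref{item:straightening_T}, $T$ is row-standard but has a \emph{column descent}: two boxes in one column, in consecutive rows, whose entries strictly decrease downwards. This is the input of a Garnir relation, in the form forced by our use of the row-antisymmetrizer $y_\lambda$, so ``transposed'' relative to the usual column-antisymmetrizer set-up: the appropriate Garnir element, applied inside $y_\lambda$, should rewrite $v_Ty_\lambda$ as a linear combination of terms $v_Sy_\lambda$ with $S$ row-standard of shape $\lambda$ and $S\rhd T$, together with terms $v_{S'}y_{\lambda'}$ in which the two columns carrying the descent have been merged into a common row, so $\lambda'\rhd\lambda$; right-multiplying by $d(\tt)$ then produces precisely the two alternatives of the statement. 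I expect this to be the main obstacle: one has to pin down the correct Garnir element in the transposed convention and verify both that every $v_Sy_\lambda$ that occurs is row-standard and strictly more dominant than $T$, and that the remaining terms genuinely raise the shape --- this is the classical straightening analysis for permutation modules, but bookkeeping-heavy. With parts~\ref{item:straightening_T} and~\ref{item:straightening_t} in hand the induction closes: at a given $\lambda$ all shapes $\rhd\lambda$ are covered by the inductive hypothesis, and one first applies part~\ref{item:straightening_t} to bring $\tt$ to standard form and then part~\ref{item:straightening_T} to bring $T$ to semi-standard form, each step a Noetherian descent in the relevant tableau dominance order; the base case $\lambda=(m)$ is trivial, since then every row-standard pair $(T,\tt)$ is already semi-standard and standard.
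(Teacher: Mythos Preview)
Your outline is a genuinely different route from the paper's. The paper does \emph{not} prove parts~\ref{item:straightening_T} and~\ref{item:straightening_t} first and then deduce the basis statement; it goes the other way. It first observes (as you do) that the $v_{T\tt}$ with $T,\tt$ row-standard span, and that Garnir relations rewrite a non-(semi)standard $v_{T\tt}$ in terms of higher ones---but only with respect to the \emph{lexicographic} order, not dominance. The upgrade to dominance, together with linear independence, is obtained in one stroke by a dual-basis argument: the paper constructs explicit elements $w_{T\tt}$ (a column analogue of $v_{T\tt}$) and shows that the pairing $\langle w_{T\tt},v_{S\ss}\rangle$ under the standard bilinear form on $V^{\otimes m}$ is unitriangular with respect to the product dominance order on pairs $(T,\tt)$. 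That single triangularity statement yields both linear independence of the proposed basis and the dominance form of the straightening rules. Your RSK count for linear independence is a clean replacement for half of this (and correct: a surjection of free $R$-modules of equal finite rank is an isomorphism over any commutative ring), and your treatment of part~\ref{item:straightening_t} via Proposition~\ref{prop:murphy} is fine.

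The gap is precisely where you flag it: part~\ref{item:straightening_T}. You assert that the Garnir relation, in the row-antisymmetrizer convention, rewrites $v_Ty_\lambda$ as a combination of $v_Sy_\lambda$ with $S$ row-standard and $S\rhd T$ in dominance (plus higher-shape terms), but you do not prove it, and your description of the higher-shape terms (``the two columns carrying the descent have been merged into a common row'') mixes up rows and columns. More to the point, the paper explicitly records that a direct Garnir computation gives only lexicographic improvement here, and introduces the $w_{T\tt}$ precisely to sidestep the semistandard dominance analysis you are proposing to do by hand. What you would need is essentially the straightening law for bideterminants in dominance form; this is true and classical, but it is a nontrivial combinatorial argument in its own right, not a bookkeeping exercise, and it is not supplied by Proposition~\ref{prop:murphy} (which concerns tableaux with distinct entries). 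So as written your proof of part~\ref{item:straightening_T}---and hence of the theorem---is incomplete, and the missing ingredient is exactly the one the paper's bilinear-form argument is designed to replace.
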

\begin{proof}
	That the set in \ref{item:basis_of_tensor_space}.~is a generating system can be seen similarly as the results in \cite{murphy}: obviously, the set of all $v_{T\tt}$ with $T$ and $\tt$ row-standard $\lambda $-tableaux for partitions $\lambda$ generate the tensor space (all vectors $v_{\bf i}$ can be written as $v_{T\tt}$ with $\lambda=(1^m)$).
	
If $T$ is not semi-standard or $\tt$ is not standard then using Garnir relations
$v_{T\tt}$ can be rewritten as a linear combination similar as in 
\ref{item:straightening_T}.~or \ref{item:straightening_t}.~except that $\triangleright$ should be replaced by $>$ where $>$ is the lexicographical order. 	
 
To show that this set is linearly independent and that the other two parts of the theorem hold, it suffices to find a bilinear form $\langle.,.\rangle:V^{\otimes m}\times V^{\otimes m}\to R$ and vectors $w_{T,\tt}\in V^{\otimes m}$ for $T\in\Semistd(\lambda)$,  $\tt\in\Std(\lambda)$, $\lambda\vdash m$,
 such that the following holds:
  If $\lambda,\lambda'\vdash m$, $T\in \Semistd(\lambda)$, 
 $\tt\in\Std(\lambda)$, $S\in \Tab(\lambda')$
  row-standard and $\ss\in\tab(\lambda')$ row-standard, then we have 
\begin{itemize}
\item $	\langle w_{T\tt},v_{T\tt}\rangle$ is invertible. 
\item $	\langle w_{T\tt},v_{S\ss}\rangle\neq 0\Rightarrow T\trianglerighteq S$ and $\tt\trianglerighteq\ss$.    
\end{itemize} 
Let $\langle.,.\rangle$ be the bilinear form on $ V^{\otimes m}$ given by $\langle v_{\bf i},v_{\bf j}\rangle=\delta_{\bf i,\bf j} $ and let $w_{T\tt}$
 be the basis elements defined similarly to \cite{mathas}:
 
 Let $w_{T\tt}$ be the sum of all $v_{\bf i}$ where $\bf i$ is a multi-index such that for each column in $[\lambda]$  the numbers in $\tt$ in this column indicate the positions of the numbers in $T$ in this column. To compute $	\langle w_{T\tt},v_{S\ss}\rangle$ one has to find all multi-indices $\mathbf i$ such that for each row (column) in $[\lambda]$  the numbers in $\ss$ ($\tt$) in this row (column) indicate the positions of the numbers in $S$ ($T$) in this row (column).  Each such multi-index contributes $1$ or $-1$ to  $	\langle w_{T\tt},v_{S\ss}\rangle$. 
Let $\I$ be the set of multi-indices satisfying these conditions.

Suppose first that $S=T$ and $\ss=\tt$. Let $k$ be the largest entry of $T$ and let $j$ be maximal such that $k$ appears in column $j$ in $T$. Suppose the entries equal to $k$ in this column are in row $i+1,i+2,\ldots,i+l$.   The position of the $l$ entries equal to $k$ in $\bf i\in\I$ are numbers occuring in column $j$ of $\tt$. Since 
 no entry in the first $i$ rows of $T$ is equal to $k$, the numbers in the first $i$ rows of $\tt$ indicate positions with entries not equal to $k$. This uniquely determines the position of these $l$ entries in $\bf i$. 
Repeating this process shows that there is a unique multi-index ${\bf i}\in \I$ which shows that $\langle w_{T\tt},v_{T\tt}\rangle=\pm 1$ is invertible.

Suppose now  that $\tt\ntrianglerighteq \ss$ 
and there exists a multi-index ${\bf i }\in\I$. By \cite{james,murphy} there are two entries, say  $k$ and $l$ in some 
column $i$ of $\tt$ which are in the same row $j$ of $\ss$.
 Since $\bf i$ satisfies the conditions above, $i_k$ and $i_l$ are in the $i$-th column of $T$ and in the $j$-th row of $S$. In particular, $i_k\neq i_l$ and 
  ${\bf i }(kl)\in \I$. Since ${\bf i}(kl)$ contributes $-1$ times the summand that is contributed by ${\bf i}$ to $\langle w_{T\tt},v_{S\ss}\rangle$, we have $\langle w_{T\tt},v_{S\ss}\rangle=0$. 
  
	  
Suppose now that $T\ntrianglerighteq S$ and there is a multi-index ${\bf i}\in\I$. Since $T\ntrianglerighteq S$ there is some  $i$ such that $\shape (T\downarrow i)\ntrianglerighteq \shape (S\downarrow i)$. Delete all entries in $\ss$ and in $\tt$ that indicate positions in $\bf i$ of entries greater than $i$.  
Then each row of the tableau obtained from $\ss$ has as many boxes as the corresponding row in $S$ has, a similar statement holds for columns, $\tt$ and $T$.
These new tableaux might have holes, but after pushing together entries in each row (column) of the tableau obtained from  $\ss$ ($\tt$), these tableaux have the same shape as $S\downarrow i$ ($T\downarrow i$).
 Again by the results in \cite{james,murphy}	   there are two entries in some 
column $i$ of $\tt$ which are in the same row $j$ of $\ss$. Now $\langle w_{T\tt},v_{S\ss}\rangle=0$ follows as before.

\end{proof}

\section{A filtration of the mixed tensor space}\label{section:filtration_mixed_tensor_space}
Fix a natural number $n$. In this section, we define a basis  of the mixed tensor space which leads to a filtration of the mixed tensor space  with $U$-$B_{r,s}(n)$-bimodules. Each layer of this filtration is a tensor product of a $U$-left module and a $B_{r,s}(n)$-right module.

Recall the cellular basis of
$\Endo:=\mathrm{End}_{U}(V^{\otimes r}\otimes {V^*}^{\otimes s})$
 \[
   \left\{ \overline{m}_{(\tt',\uu',\vv'),(\tt,\uu,\vv)}\mid 
    	(\tt',\uu',\vv'),(\tt,\uu,\vv)\in M_0(\lambda,\mu),(\lambda,\mu)\in
       \Lambda_0(r,s) \right\}
   \]
from Theorem~\ref{theorem:cellular_basis_endom}.
\begin{defn} 
For $(\lambda,\mu)\in\Lambda_0(r,s)$ let  
$\overline{C}^{(\lambda,\mu)}$ be the \emph{cell module} of
$\Endo$
with $R$-basis $\{c_{(\tt,\uu,\vv)}\mid (\tt,\uu,\vv)\in M_0(\lambda,\mu)\}$. By inflation, it
is also a $B_{r,s}(n)$-right module with action given by
       \[
       c_{(\tt,\uu,\vv)}a =	\sum_{(\tt',\uu',\vv')}\lambda_{(\tt',\uu',\vv')}c_{(\tt',\uu',\vv')}     
       \]
for $a\in B_{r,s}(n)$ and $\lambda_{(\tt',\uu',\vv')}\in R$ determined by
	\[\overline{m}_{(\tt'',\uu'',\vv''),(\tt,\uu,\vv)}\overline{a}\equiv\sum_{(\tt',\uu',\vv')}\lambda_{(\tt',\uu',\vv')}\overline{m}_{(\tt'',\uu'',\vv''),(\tt',\uu',\vv')}\mod \Endo(\rhd (\lambda,\mu)),\]
where $\overline{a}$ is the coset of $a$ modulo $\mathrm{ann}$ and thus an element of  
$\Endo$.
\end{defn}

\begin{defn}\label{defn:rationaltableaux}
  Let $0\leq k\leq \mathrm{min}(r,s)$ and let  
  $\lambda\in \Lambda(r-k)$ and $\mu\in
  \Lambda(s-k)$ be partitions with $\lambda_1+\mu_1\leq n$.
  A \emph{rational $(\lambda,\mu)$-tableau} is a pair
  $(\aa,\bb) \in\mathcal{T}(\lambda)\times\mathcal{T}(\mu)$. 
  
  Let $\mathrm{first}_i(\aa,\bb)$ be the number 
  of entries of the first row of $\aa$ which are $\leq i$
  plus the number 
  of entries of the first row of $\bb$ which are $\leq i$. 
  A rational tableau is called \emph{standard} if 
  $\aa\in\mathcal{T}_0(\lambda)$, $\bb\in\mathcal{T}_0(\mu)$
  and the following condition  holds:
  \begin{equation}
    \mathrm{first}_i(\aa,\bb)\leq i 
    \text{ for all }i=1,\ldots,n
    \label{equ:condition}
  \end{equation}
  We denote the set of standard rational $(\lambda,\mu)$-tableaux by
  $\Rat(\lambda,\mu)$. 
\end{defn}

\begin{remark}\label{remark:bijection_rational}
  Let $(\lambda,\mu)$, $\lambda=(\lambda_1,\ldots,\lambda_{l_\lambda})$,
  $\mu=(\mu_1,\ldots,\mu_{l_\mu})$ be as in
  Definition~\ref{defn:rationaltableaux} and 
  let $l\geq l_\mu$ be a natural number. 
  Let 
  \[
  \tau=(n^{l-l_\mu},n-\mu_{l_\mu},\ldots,
  n-\mu_1,\lambda_1,\ldots,\lambda_{l_\lambda}). 
  \]
  Then $\mathcal{T}_0(\tau)$ 
   is in bijection with the set of standard rational
  $(\lambda,\mu)$-tableaux (see \cite{dipperdotystoll2}). By \cite{stembridge}, the
  cardinality of this set is equal to the dimension of
  $V_{\lambda,\mu}$ in Proposition~\ref{prop:stembridge}. 
\end{remark}

\begin{defn}
  Let $(\aa,\bb)\in \Rat(\lambda,\mu)$ and $(\tt,\uu,\vv)\in
  M(\lambda,\mu)$. 
  Let $a_1,a_2$, \ldots, $a_{r-k}$ and $b_1,\ldots,b_{s-k}$ be the entries
  of $\aa$ and $\bb$ respectively read row by row. Then
  $v_{\aa,\bb}:=v_{a_{r-k}}\otimes \cdots \otimes v_{a_2}\otimes v_{a_1}\otimes
  v_{b_1}^*
  \otimes v_{b_2}^*\otimes\cdots\otimes  v_{b_s-k}^*$ is an element of 
  $V^{\otimes r-k}\otimes {V^*}^{\otimes s-k}$. Set
  \[
  v_{(\aa,\bb),(\tt,\uu,\vv)}:=v_{\aa,\bb}m_{(\tt,\uu,\vv)}
  \in V^{\otimes r}\otimes {V^*}^{\otimes s}
  \]

\end{defn}

\begin{thm}
  The set
  \[
  \{
   v_{(\aa,\bb),(\tt,\uu,\vv)}\mid 
   (\aa,\bb)\in \Rat(\lambda,\mu),
   (\tt,\uu,\vv)\in M_0(\lambda,\mu), (\lambda,\mu)\in \Lambda_0(r,s)
  \}
  \]
  is an $R$-basis of mixed tensor space $V^{\otimes r}\otimes
  {V^*}^{\otimes s}$. 
\end{thm}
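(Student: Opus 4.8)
The plan is to show that the proposed set has the right cardinality and spans $V^{\otimes r}\otimes {V^*}^{\otimes s}$; linear independence then follows for free. For the cardinality count, by Remark~\ref{remark:bijection_rational} the number of pairs $(\aa,\bb)\in\Rat(\lambda,\mu)$ equals $\dim V_{\lambda,\mu}$, and by Proposition~\ref{prop:stembridge} together with the fact that $|M_0(\lambda,\mu)|$ equals the number of paths $Y$ to $(\lambda,\mu)$ with $\max(Y)\le n$ (this is exactly $n_{\lambda,\mu}$, since $\max(Y)=\max(\tt,\uu,\vv)$ for corresponding triples), summing $\dim(V_{\lambda,\mu})\cdot n_{\lambda,\mu}$ over $(\lambda,\mu)\in\Lambda_0(r,s)$ gives $\dim_R(V^{\otimes r}\otimes {V^*}^{\otimes s})=n^{r+s}$. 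So the cardinality matches, and it suffices to prove the set spans.

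For spanning, the key observation is that $v_{(\aa,\bb),(\tt,\uu,\vv)}=v_{\aa,\bb}\,m_{(\tt,\uu,\vv)}$, and $m_{(\tt,\uu,\vv)}$ is (up to the ``top part'') built from $y_\nu$ and $y_\mu$ exactly as in the ordinary tensor space situation. First I would reduce to the case of spanning by all $v_{\aa,\bb}\,m_{(\tt,\uu,\vv)}$ with $(\tt,\uu,\vv)$ an arbitrary \emph{row-standard} triple and $(\aa,\bb)$ an arbitrary pair of \emph{row-standard} tableaux (not necessarily semi-standard or satisfying the standardness constraint), since such elements clearly span: every basis vector $v_{\mathbf i}$ of mixed tensor space can be written this way by taking $(\lambda,\mu)=((1^r),(1^s))$ and reading off $\mathbf i$. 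Then I would run a straightening argument on three fronts, using the dominance order on triples $\unrhd$ from Section~\ref{section:basis} together with the dominance order on rational tableaux: (i) if $(\tt,\uu,\vv)$ is not standard, apply Lemma~\ref{lemma:straightening} to rewrite $m_{(\tt,\uu,\vv)}$ modulo $(\lambda',\mu')$-elements with $(\lambda',\mu')\rhd(\lambda,\mu)$, pushing up in the poset; (ii) if $\aa$ or $\bb$ fails to be semi-standard, apply Theorem~\ref{theorem:Straigthening}\ref{item:straightening_T} (applied to $v_{\aa,\bb}$ before hitting it with the rest of $m_{(\tt,\uu,\vv)}$, exploiting that placing diagrams side by side is a tensor product of maps) to rewrite in terms of dominant tableaux or strictly larger shape; and (iii) if $(\aa,\bb)$ violates the constraint $\mathrm{first}_i(\aa,\bb)\le i$, then the first row of the relevant $y$-antisymmetriser acts on a tensor with a repeated index beyond the allowed range — concretely, some $y_{(m_i)}$ with $m_i>n$ appears and acts as zero by Remark~\ref{remark:action_of_the_walled_Brauer_algebra}, killing the term. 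Combining these three straightening moves and inducting on the poset $\Lambda(r,s)$ (eventually landing only in $\Lambda_0(r,s)$, since the others contribute zero) shows that the proposed set spans.

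The main obstacle I anticipate is (iii): making precise the claim that a row-standard rational tableau $(\aa,\bb)$ violating $\mathrm{first}_i(\aa,\bb)\le i$ forces $v_{(\aa,\bb),(\tt,\uu,\vv)}=0$, or more precisely can be rewritten without such terms. One must carefully identify which antisymmetriser $y_{(m_i)}$ is forced to appear when too many small entries sit in the first rows of $\aa$ and $\bb$; this is essentially the content of the combinatorics in Section~\ref{section:combinatorics} and of the bijection in Remark~\ref{remark:bijection_rational}, but translating it into a statement about the action of $m_{(\tt,\uu,\vv)}$ on $v_{\aa,\bb}$ requires tracking how the horizontal edges of $m_{(\tt,\uu,\vv)}$ combine indices of $V$ and $V^*$. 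The interplay between the two straightening orders — the one on triples versus the one on rational tableaux — also needs care so that the induction is genuinely well-founded; I would set up a single combined partial order (refining shape, then triple-dominance, then rational-tableau-dominance) and check each straightening step strictly decreases it unless the term is already of the desired form or vanishes.
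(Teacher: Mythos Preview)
Your cardinality count and straightening steps (i) and (ii) match the paper's argument. The genuine gap is in step (iii).

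When $(\aa,\bb)$ violates $\mathrm{first}_i(\aa,\bb)\le i$, the element $v_{(\aa,\bb),(\tt,\uu,\vv)}$ is in general \emph{not} zero, and no antisymmetriser $y_{(m)}$ with $m>n$ is forced to appear. Take $n=2$, $r=s=1$, $k=0$, $\lambda=\mu=(1)$, and let $\aa,\bb$ each be the single box with entry $1$. Then $\mathrm{first}_1(\aa,\bb)=2>1$, yet $v_{(\aa,\bb),(\tt,\uu,\vv)}=v_1\otimes v_1^*\neq 0$, and the only antisymmetrisers present in $m_{(\tt,\uu,\vv)}$ are $y_{(1)}$. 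The antisymmetrisers in $m_{(\tt,\uu,\vv)}$ act separately on the $V$-factors and on the $V^*$-factors; nothing antisymmetrises across the wall, so a ``repeated index'' shared between $\aa$ and $\bb$ is never seen by any $y$.

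The paper handles (iii) by a \emph{rewriting} step that uses horizontal edges rather than any antisymmetriser. If $i$ is minimal with $\mathrm{first}_i(\aa,\bb)>i$, one lets $I$ be the set of values occurring in both first rows, sets $D=I\cup\{i+1,\ldots,n\}$, and forms the set $M$ of all row-standard pairs obtained from $(\aa,\bb)$ by simultaneously relabelling the entries from $I$ by values in $D$. One then shows (via contraction identities as in \cite{dipperdotystoll2}) that $\sum_{(\aa',\bb')\in M} v_{\aa',\bb'}\,m_{\lambda,\mu}$ lies in the span of elements of strictly more dominant shape (fewer boxes, i.e.\ larger $k$), while every $(\aa',\bb')\in M$ other than $(\aa,\bb)$ already satisfies Condition~\eqref{equ:condition}. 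In the toy example this is just $v_1\otimes v_1^*+v_2\otimes v_2^*=\sum_j v_j\otimes v_j^*$, the image of a single horizontal edge, hence of shape $(\emptyset,\emptyset)\rhd((1),(1))$.

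A smaller omission: your step (i) only reduces to $(\tt,\uu,\vv)\in M(\lambda,\mu)$, not to $M_0(\lambda,\mu)$. One still has to eliminate standard triples with $\max(\tt,\uu,\vv)>n$; \emph{that} is where the antisymmetriser-vanishing mechanism, via Lemma~\ref{lemma:construction_of_basis}, genuinely applies. You appear to have conflated that step with (iii).
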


\begin{proof}
  Since the rank of  $V_{\lambda,\mu}$  in
  Proposition~\ref{prop:stembridge} for
  $(\lambda,\mu)\in\Lambda_0(r,s)$
  is equal to the cardinality of $\Rat(\lambda,\mu)$   and
  the multiplicity of $V_{\lambda,\mu}$ in $V^{\otimes r}\otimes
  {V^*}^{\otimes s}$ is the cardinality of $M_0(\lambda,\mu)$, the
  cardinality of the set $\{ v_{(\aa,\bb),(\tt,\uu,\vv)}\}$ is equal
  to the rank of the mixed tensor space $V^{\otimes r}\otimes
  {V^*}^{\otimes s}$.  Thus it suffices to show that the set $\{
  v_{(\aa,\bb),(\tt,\uu,\vv)}\}$ generates $V^{\otimes r}\otimes
  {V^*}^{\otimes s}$.

  The elements  $v_{(\aa,\bb),(\tt,\uu,\vv)}$ can be defined even if
  $(\aa,\bb)$ and $(\tt,\uu,\vv)$ are not standard. It is easy to see
  that the $v_{(\aa,\bb),(\tt,\uu,\vv)}$ with all tableaux
  row-standard/row-anti-standard generate 
  $V^{\otimes r}\otimes {V^*}^{\otimes s}$. So we have to show that if
  $(\aa,\bb)$ or $(\tt,\uu,\vv)$ is not standard, then
  $v_{(\aa,\bb),(\tt,\uu,\vv)}$
  can be
  written as a linear combination of such elements involving tableaux
  which are dominating in some sense. 

  Let $(\aa,\bb),(\aa',\bb')\in \Rat(\lambda,\mu)$ and $(\tt,\uu,\vv),
  (\tt',\uu',\vv')\in M_0(\lambda,\mu)$. We define  
  $((\aa,\bb),(\tt,\uu,\vv))\trianglerighteq ((\aa',\bb'),(\tt',\uu',\vv'))$
  if and only if $(\lambda,\mu)\rhd(\lambda',\mu')$
  or $(\lambda,\mu)=(\lambda',\mu')$,  $\aa\trianglerighteq\aa'$,
  $\bb\trianglerighteq\bb'$ and 
  $(\tt,\uu,\vv)\trianglerighteq (\tt',\uu',\vv')$. 

	Now if $(\tt,\uu,\vv)$ is not standard, by the previous results 
$v_{(\aa,\bb),(\tt,\uu,\vv)}$ can be written as a linear combination of dominating elements with respect to the order just defined. Suppose that $\aa$ is not semi-standard. 
Let $v_{\aa,\tt^\lambda}$ be the basis element as in 
Section~\ref{section:ordinary_tensor_space}. Then  $v_{(\aa,\bb),(\tt,\uu,\vv)}$  involves a modified version $v_{\aa,\tt^\lambda}^{\mathrm{refl}}$ of this basis  element with reversed ordering of the tensor product, i.~e.~it can be written as $(v_{\aa,\tt^\lambda}^{\mathrm{refl}}\otimes \ldots)\cdot \ldots$. 
Using Theorem~\ref{theorem:Straigthening}, $v_{(\aa,\bb),(\tt,\uu,\vv)}$ can be written as a linear combination of dominating elements in the above sense. The same works if $\bb$ is not semi-standard.

 Finally, suppose that Condition~\eqref{equ:condition} does not hold and $i$ is the minimal entry violating this condition. 
We first assume, that $\aa$ and $\bb$ are tableaux with one row and $i$ is the greatest entry in $\aa$ and $\bb$. Let $I=\{i_1,i_2,\ldots,i_l\}$ where $i_1<i_2<\ldots <i_l=i$ are the entries appearing both in $\aa$ and $\bb$. Let $D=I\cup\{i+1,i+2,\ldots,n\}$. Note that each element $v_{(\aa,\bb),(\tt,\uu,\vv)}$ can be written as $v_{\aa,\bb}m_{(r-k),(s-k)}b$ for some linear combination of generalized diagrams $b$. Let $M$ be the set of all rational  $((r-k),(s-k))$-tableaux $(\aa',\bb')$ such that $\aa'$ and $\bb'$ are row-standard and $\aa'$ and $\bb'$ are obtained from $\aa$ and $\bb$ by replacing the entries in $I$ by entries in $D$. In particular $(\aa,\bb)\in M$ and all other elements of $M$ satisfy Condition~\eqref{equ:condition}.    By similar methods as in \cite{dipperdotystoll2} one can show that $\sum_{(\aa',\bb')\in M}v_{\aa',\bb'}m_{(r-k),(s-k)}$ involves only dominating elements (such that the partitions involve less boxes). In the general case let $i$ be again minimal violating the condition and let $l_1$ and $l_2$ be the number of entries $\leq i$ in the first row of $\aa$ and $\bb$ respectively. Then $m_{\lambda,\mu}$ has a left factor $m_{(l_1),(l_2)}$. Now, the result  follows by plugging  in the results for the special case and induction on the sum of all entries in the first rows. 
\end{proof}
The proof also shows the following proposition:
\begin{cor}
  Let $(\lambda,\mu)\in\Lambda_0(r,s)$. Let $V(\trianglerighteq(\lambda,\mu))$ be the $R$-span of the set 
 \[\{
   v_{(\aa,\bb),(\tt,\uu,\vv)}\mid 
   (\aa,\bb)\in \Rat(\lambda',\mu'),
   (\tt,\uu,\vv)\in M_0(\lambda',\mu'),(\lambda',\mu')\trianglerighteq(\lambda,\mu)\}\]
    and similarly, let $V(\rhd(\lambda,\mu))$ be the $R$-span of the set 
    \[\{
   v_{(\aa,\bb),(\tt,\uu,\vv)}\mid 
   (\aa,\bb)\in \Rat(\lambda',\mu'),
   (\tt,\uu,\vv)\in M_0(\lambda',\mu'),(\lambda',\mu')\rhd(\lambda,\mu)\}.\]
	Then $V(\trianglerighteq(\lambda,\mu))$ and $V(\rhd(\lambda,\mu))$ are 
	$U$-$B_{r,s}(n)$-submodules. 
	\end{cor}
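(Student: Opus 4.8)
The plan is to reduce everything to the single identity
\[
V(\trianglerighteq(\lambda,\mu)) \;=\; \bigl(V^{\otimes r}\otimes {V^*}^{\otimes s}\bigr)\cdot B^{\trianglerighteq(\lambda,\mu)},
\]
where $B^{\trianglerighteq(\lambda,\mu)}$ is the cell ideal of $B_{r,s}(n)$ provided by Theorem~\ref{thm:basis}. Once this is established the corollary is immediate: $B^{\trianglerighteq(\lambda,\mu)}$ is a two-sided ideal, so the right hand side is a right $B_{r,s}(n)$-submodule; and since every element of $B_{r,s}(n)$ acts on mixed tensor space as a $U$-endomorphism, the left $U$-action commutes with multiplication by $B^{\trianglerighteq(\lambda,\mu)}$, so the right hand side is also a left $U$-submodule, hence a $U$-$B_{r,s}(n)$-sub-bimodule. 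For $V(\rhd(\lambda,\mu))$ I would then observe that $V(\rhd(\lambda,\mu)) = \sum_{(\lambda',\mu')\rhd(\lambda,\mu)} V(\trianglerighteq(\lambda',\mu'))$, which is a sum of sub-bimodules by the case already treated.

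Before proving the identity I would record a consequence of the straightening relations in the proof of the preceding theorem: $V(\trianglerighteq(\lambda,\mu))$ already equals the $R$-span of all vectors $v_{(\aa,\bb),(\tt,\uu,\vv)}$ in which $(\aa,\bb)$ is an \emph{arbitrary} rational $(\lambda',\mu')$-tableau, $(\tt,\uu,\vv)$ a row-standard triple of shape $(\lambda',\mu')$, and $(\lambda',\mu')\trianglerighteq(\lambda,\mu)$. Indeed, straightening the triple (Lemma~\ref{lemma:straightening}), semistandardising $\aa$ and $\bb$ (Theorem~\ref{theorem:Straigthening}), and enforcing Condition~\eqref{equ:condition} (the argument in the proof of the preceding theorem) each rewrite such a vector as a combination of vectors whose shape dominates $(\lambda',\mu')$, hence dominates $(\lambda,\mu)$.

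For the inclusion ``$\subseteq$'' I would use that $B^{\trianglerighteq(\lambda,\mu)}$ is spanned by the elements $m_{(\tt',\uu',\vv'),(\tt,\uu,\vv)} = b_{(\tt',\uu',\vv')}^{*}\,m_{\lambda',\mu'}\,b_{(\tt,\uu,\vv)}$ with $(\lambda',\mu')\trianglerighteq(\lambda,\mu)$. For any $v\in V^{\otimes r}\otimes {V^*}^{\otimes s}$ the vector $v\cdot b_{(\tt',\uu',\vv')}^{*}$ lies in $V^{\otimes(r-k')}\otimes{V^*}^{\otimes(s-k')}$ (with $\lambda'\vdash r-k'$, $\mu'\vdash s-k'$), and so does $v\cdot b_{(\tt',\uu',\vv')}^{*}m_{\lambda',\mu'}$; being a combination of pure tensors $v_{\aa,\bb}$, it follows that $v\cdot m_{(\tt',\uu',\vv'),(\tt,\uu,\vv)}$ is a combination of vectors $v_{\aa,\bb}\,m_{\lambda',\mu'}\,b_{(\tt,\uu,\vv)} = v_{(\aa,\bb),(\tt,\uu,\vv)}$ over rational $(\lambda',\mu')$-tableaux $(\aa,\bb)$, which lies in $V(\trianglerighteq(\lambda,\mu))$ by the previous paragraph. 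For ``$\supseteq$'' I would take a basis vector $v_{(\aa,\bb),(\tt,\uu,\vv)}$ with $(\aa,\bb)\in\Rat(\lambda',\mu')$, $(\tt,\uu,\vv)\in M_0(\lambda',\mu')$ and $(\lambda',\mu')\trianglerighteq(\lambda,\mu)$, let $(\tt_0,\uu_0,\vv_0)$ be the distinguished standard triple of shape $(\lambda',\mu')$ from the proof of Theorem~\ref{thm:basis}, and note that $b_{(\tt_0,\uu_0,\vv_0)}^{*}$ induces a surjection $V^{\otimes r}\otimes{V^*}^{\otimes s}\to V^{\otimes(r-k')}\otimes{V^*}^{\otimes(s-k')}$: it contracts $k'$ pairs of tensor factors and is trivial on the rest, and a preimage of $v_{\aa,\bb}$ is obtained by inserting $v_1\otimes v_1^{*}$ into the contracted pairs. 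Choosing $w$ with $w\cdot b_{(\tt_0,\uu_0,\vv_0)}^{*} = v_{\aa,\bb}$ gives $w\cdot m_{(\tt_0,\uu_0,\vv_0),(\tt,\uu,\vv)} = v_{(\aa,\bb),(\tt,\uu,\vv)}$, and $m_{(\tt_0,\uu_0,\vv_0),(\tt,\uu,\vv)}\in B^{\trianglerighteq(\lambda',\mu')}\subseteq B^{\trianglerighteq(\lambda,\mu)}$, as desired.

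The point needing the most care is the ``$\subseteq$'' inclusion: one has to be sure that the error terms obtained when straightening a non-standard rational tableau $(\aa,\bb)$ -- both when $\aa$ or $\bb$ fails to be semi-standard and when Condition~\eqref{equ:condition} fails -- genuinely move strictly upward in the dominance order on $\Lambda_0(r,s)$, which is precisely what was checked in the previous section, and that the contraction diagram $b_{(\tt_0,\uu_0,\vv_0)}^{*}$ really does act surjectively on mixed tensor space. Everything else is formal bookkeeping with the cell ideal $B^{\trianglerighteq(\lambda,\mu)}$, so the statement is genuinely a corollary of the proof above.
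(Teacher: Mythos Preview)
Your argument is essentially sound and more explicit than the paper's own treatment, which consists of the single line ``The proof also shows the following proposition'' immediately after the basis theorem. In other words, the paper regards the corollary as a by-product of the straightening relations already established: acting by $U$ on the left or by $B_{r,s}(n)$ on the right produces elements of the form $v_{(\aa',\bb'),(\tt',\uu',\vv')}$ which, after straightening, have shapes dominating $(\lambda,\mu)$. Your identification $V(\trianglerighteq(\lambda,\mu)) = (V^{\otimes r}\otimes {V^*}^{\otimes s})\cdot B^{\trianglerighteq(\lambda,\mu)}$ is a cleaner structural reformulation which the paper does not state explicitly; it makes the bimodule property manifest and is a genuinely useful observation.

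There is one point where your argument is thinner than you acknowledge. You assert that the three straightening procedures (Lemma~\ref{lemma:straightening}, Theorem~\ref{theorem:Straigthening}, and the argument enforcing Condition~\eqref{equ:condition}) suffice to show that $V(\trianglerighteq(\lambda,\mu))$ coincides with the span over \emph{all} row-standard triples of shape $\trianglerighteq(\lambda,\mu)$. But those procedures only reduce a row-standard triple $(\tt,\uu,\vv)$ to a \emph{standard} one, i.e.\ to an element of $M(\lambda',\mu')$, not to $M_0(\lambda',\mu')$. The definition of $V(\trianglerighteq(\lambda,\mu))$ uses $M_0$, so you still have to show that $v_{(\aa,\bb),(\tt,\uu,\vv)}$ with $\max(\tt,\uu,\vv)>n$ lies in the span of basis vectors of shape $\trianglerighteq(\lambda',\mu')$. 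This is exactly what the annihilator relations of Lemma~\ref{lemma:construction_of_basis} provide: for such a triple, $m_{(\tt,\uu,\vv)}$ agrees, modulo strictly dominating terms and modulo elements acting as zero on tensor space, with a combination of $m_{(\tilde\tt,\tilde\uu,\tilde\vv)}$ of the same shape with $\ss\rhd^{\op}\tilde\ss$, and one concludes by induction on $\ss$. You should cite this step; the paper is equally terse here, but its one-line proof is pointing back to the whole preceding discussion, including Section~\ref{section:annihilatorbasis}.

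A cosmetic remark: your labels ``$\subseteq$'' and ``$\supseteq$'' are interchanged relative to the displayed identity.
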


It is also easy to see that for a fixed triple $(\tt,\uu,\vv)\in M_0(\lambda,\mu)$, 
\[\left\langle v_{(\aa,\bb),(\tt,\uu,\vv)}\mid 
   (\aa,\bb)\in \Rat(\lambda,\mu)\right\rangle_R+V(\rhd(\lambda,\mu))\] 
   is a $U$-submodule and thus the  following definition makes sense and is independent of the chosen triple $(\tt,\uu,\vv)$. 
\begin{defn}
For $(\lambda,\mu)\in\Lambda_0(r,s)$ choose a triple	 $(\tt,\uu,\vv)\in M_0(\lambda,\mu)$.
 Let $X^{(\lambda,\mu)}$ be the $U$-module 
 \[\left\langle v_{(\aa,\bb),(\tt,\uu,\vv)}\mid 
   (\aa,\bb)\in \Rat(\lambda,\mu)\right\rangle_R+V(\rhd(\lambda,\mu))/V(\rhd(\lambda,\mu)).\]
\end{defn}

We will show that $X^{(\lambda,\mu)}$ are dual Weyl modules.
Let $D^+$ and $D^-$ respectively  be the one-dimensional $U$-modules with basis $\{d^	+\}$ and $\{d^-\}$ on which the generators $E_i=\theta^i_{i+1}$ and $F_i=\theta^{i+1}_i$ act as zero, $\theta^i_i d^+=d^+$ and $\theta^i_id^-=-d^-$. Let $S:U\to U$ be the antipode of $U$, i.~e.~the antiisomorphism fixing $E_i$ and $F_i$ and mapping $\theta^i_i$ to $-\theta^i_i$. If $M$ is a $U$-module, then $M^*$ becomes a $U$-module setting $uf(m)=           f(S(u)m)$.

 Now,  $\iota:V^*\to V^{\otimes n-1}\otimes D^-:v_i\mapsto (-1)^i v_{(12\ldots\hat{i}\ldots n)}y_{(n-1)}\otimes d^-$ defines a $U$-monomorphism ($\hat{i}$ means omitting $i$). Likewise, one can define a $U$-monomorphism $\iota: V^{\otimes r}\otimes {V^*}^{\otimes s}\to V^{\otimes r+(n-1)s}\otimes {D^-}^{\otimes s}$. If $\{1,\ldots,n\}=\{i_1,\ldots,i_l\}\dot\cup\{j_1,\ldots,j_{n-l}\}$ is a disjoint union, then it can be seen easily, that
 \[\iota((v_{i_1}^*\otimes \cdots \otimes v_{i_l}^*)y_{(l)})=\pm (v_{j_{n-l}}\otimes \cdots\otimes v_{j_1})y_{(n-l)}\otimes ({v_{(1\ldots n)}y_{(n)}})^{\otimes l-1}\otimes {d^-}^{\otimes l}.\] Furthermore, $\iota(\sum_{i=1}^n v_i\otimes v_i^*)={v_{(1\ldots n)}y_{(n)}}$.
 
  If $(\aa,\bb)\in \Rat(\lambda,\mu)$, let $\tau$ be the partition from Remark~\ref{remark:bijection_rational} and $\cc$ be the standard $\tau$-tableau in bijection with $(\aa,\bb)$. Then for each  $(\tt,\uu,\vv)\in M_0(\lambda,\mu)$ there exists a permutation $\pi_{(\tt,\uu,\vv)}\in\mathfrak{S}_{r+(n-1)s}$ such that $\iota(v_{(\aa,\bb),(\tt,\uu,\vv)})=\pm v_{\cc,\tt^\tau}\pi_{(\tt,\uu,\vv)}\otimes {d^-}^{\otimes s}$.
This shows that $\iota(V(\rhd(\lambda,\mu)))\subseteq V(\rhd\tau)\otimes {D^-}^{\otimes s}$ and $X^{(\lambda,\mu)}\cong X^\tau \otimes {D^-}^{\otimes s}$ with $X^\tau=X^{(\tau,\emptyset)}$.

Let $\tau^t$ be the transpose of $\tau$. By inspecting the action of $U$ on $X^\tau$, one can see that $X^\tau$ is isomorphic to the module $D_{\tau^t,R}$ defined in \cite{green}. Note that the modules in \cite{green} can be viewed as modules over $U$, since the Schur algebras are epimorphic images of $U$. Furthermore, $ D_{\tau^t,R} \cong V_{\tau^t,R}^\circ$ (notation as in \cite{green}) is isomorphic to $  {}^\omega\Lambda_{\tau^t}^*\cong\Lambda_{-w_0 \tau^t}^*$ with $\omega$, $\Lambda_{\square}$ and $w_0$ defined as in \cite{lusztig}.

\begin{prop}\label{prop:Weyl_module}
Let $\lambda^t=(\lambda_1',\lambda_2',\ldots,\lambda_l')$ and $\mu^t=(\mu_1,\ldots,\mu_{l'}')$. Then 	we have
\[X^{(\lambda,\mu)}\cong \Lambda_{(\mu_1',\mu_2',\ldots,\mu_{l'}',0,\ldots,0,-\lambda_l',\ldots,-\lambda_2',-\lambda_1')}^*.\]
\end{prop}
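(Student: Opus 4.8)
The plan is to run the two isomorphisms obtained immediately before the statement — $X^{(\lambda,\mu)}\cong X^\tau\otimes{D^-}^{\otimes s}$ with $X^\tau=X^{(\tau,\emptyset)}$, and $X^\tau\cong\Lambda_{-w_0\tau^t}^*$ — through an explicit transpose-and-reverse computation. The first thing to fix is the auxiliary natural number hidden in $\tau$: in Remark~\ref{remark:bijection_rational} the partition depends on a choice $\ell\ge l_\mu$, and here $\ell$ must be taken equal to $s$. This is permissible since $l_\mu\le|\mu|=s-k\le s$, and it is forced: the tableau $\cc$ occurring in $\iota(v_{(\aa,\bb),(\tt,\uu,\vv)})$ must be a $\tau$-tableau with $\tau\vdash r+(n-1)s$ (because $\iota$ embeds $V^{\otimes r}\otimes{V^*}^{\otimes s}$ into $V^{\otimes r+(n-1)s}\otimes{D^-}^{\otimes s}$), and $|\tau|=n\ell-|\mu|+|\lambda|=n\ell-(s-k)+(r-k)=n\ell-s+r$ equals $r+(n-1)s$ exactly when $\ell=s$. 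Thus
\[
\tau=(n^{\,s-l_\mu},\,n-\mu_{l_\mu},\dots,n-\mu_1,\,\lambda_1,\dots,\lambda_{l_\lambda}),
\]
a partition whose first part is $n$, so $\tau^t$ is an $n$-tuple and $-w_0\tau^t$ a genuine $GL_n$-weight.

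Next I would compute $\tau^t$ column by column: for $1\le a\le n$,
\[
\tau^t_a=\#\{i:\tau_i\ge a\}=(s-l_\mu)+(l_\mu-\mu^t_{n-a+1})+\lambda^t_a=s+\lambda^t_a-\mu^t_{n-a+1},
\]
where $\lambda^t,\mu^t$ are padded by zeros. Using $\lambda_1+\mu_1\le n$ (which holds because $(\lambda,\mu)\in\Lambda_0(r,s)$, cf.~Definition~\ref{defn:rationaltableaux}) this unwinds to
\[
\tau^t=(s+\lambda^t_1,\dots,s+\lambda^t_{\lambda_1},\underbrace{s,\dots,s}_{n-\lambda_1-\mu_1},s-\mu^t_{\mu_1},\dots,s-\mu^t_1),
\]
and, $w_0$ being the reversal, negating the reversed tuple gives
\[
-w_0\tau^t=(\mu^t_1-s,\dots,\mu^t_{\mu_1}-s,\underbrace{-s,\dots,-s}_{n-\lambda_1-\mu_1},-s-\lambda^t_{\lambda_1},\dots,-s-\lambda^t_1).
\]

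Finally I would feed in the $D^-$-twist. The module ${D^-}^{\otimes s}$ is one-dimensional of weight $(-1,\dots,-1)$, i.e. ${D^-}^{\otimes s}\cong\det^{-s}$; and the label of a costandard module is transported by $\nu\mapsto\nu+(1,\dots,1)$ under $-\otimes D^-$ (from $w_0(1,\dots,1)=(1,\dots,1)$ together with the $-w_0$-normalisation of $\Lambda^*$ imported from \cite{green,lusztig}). Hence $X^{(\lambda,\mu)}\cong\Lambda_{-w_0\tau^t}^*\otimes{D^-}^{\otimes s}\cong\Lambda_{-w_0\tau^t+(s,\dots,s)}^*$, and adding $(s,\dots,s)$ cancels every $s$ in the displayed tuple, leaving the weight $(\mu^t_1,\dots,\mu^t_{\mu_1},0,\dots,0,-\lambda^t_{\lambda_1},\dots,-\lambda^t_1)$. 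Reading $\mu^t=(\mu'_1,\dots,\mu'_{l'})$ and $\lambda^t=(\lambda'_1,\dots,\lambda'_l)$, this is exactly $(\mu'_1,\dots,\mu'_{l'},0,\dots,0,-\lambda'_l,\dots,-\lambda'_1)$, which is the asserted formula.

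I do not expect a genuine conceptual obstacle here: all the substantive input (the identification of $X^\tau$ with a dual Weyl module) is already in place before the statement, and what remains is bookkeeping. The two points that demand care are exactly (i) that the free parameter in $\tau$ is $\ell=s$, so that the shift coming from $\otimes{D^-}^{\otimes s}$ annihilates precisely the $s$'s produced by transposing, and (ii) the direction of that shift, which has to be compatible with the $\Lambda/\Lambda^*$ and $w_0$ conventions; a sign slip in either place would destroy the cancellation, so those are the spots to double-check.
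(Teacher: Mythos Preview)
Your proof is correct and follows essentially the same route as the paper: the paper writes the chain $X^{(\lambda,\mu)}\cong \Lambda_{-w_0\tau^t}^*\otimes{D^-}^{\otimes s}\cong\Lambda_{-w_0\tau^t}^*\otimes{D^+}^{*\otimes s}\cong(\Lambda_{-w_0\tau^t}\otimes{D^+}^{\otimes s})^*\cong\Lambda_{-w_0\tau^t+s\cdot(1^n)}^*$ and then says ``the claim follows by computing $-w_0\tau^t+s\cdot(1^n)$''. You supply precisely that computation, and you also make explicit the choice $\ell=s$ that the paper leaves implicit. The only cosmetic difference is how the determinant twist is justified: the paper passes through $D^-\cong(D^+)^*$ and pulls the dual outside, whereas you argue directly with highest weights and the $-w_0$ normalisation; both give the same $+s\cdot(1^n)$ shift.
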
 
\begin{proof}
$X^{(\lambda,\mu)}\cong \Lambda_{-w_0 \tau^t}^*\otimes {D^-}^{\otimes s}\cong \Lambda_{-w_0 \tau^t}^*\otimes {{D^+}^*}^{\otimes s}\cong 
(\Lambda_{-w_0 \tau^t}\otimes {{D^+}}^{\otimes s})^*\cong 
\Lambda_{-w_0 \tau^t+s\cdot(1^n)}^*$ and the claim follows by computing $-w_0 \tau^t+s\cdot(1^n)$. 	
\end{proof}

The modules $\Lambda_{\square}$ are often (but not always) called Weyl modules, thus Propsition~\ref{prop:Weyl_module} shows that the modules $X^{(\lambda,\mu)}$ are dual Weyl modules where dual modules are defined via the antipode.

\begin{thm}[Filtration with cell modules and dual Weyl modules]\label{thm:filtration}
 We have
	\[
	V(\trianglerighteq(\lambda,\mu))/V(\rhd(\lambda,\mu))\cong X^{(\lambda,\mu)}\otimes\overline{C}^{(\lambda,\mu)}
	\]
	as $U$-$B_{r,s}(n)$-bimodules. In particular, $V^{\otimes r}\otimes {V^*}^{\otimes
s}$ has a filtration with cell modules and one with dual Weyl modules. 
   
\end{thm}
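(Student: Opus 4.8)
The plan is to write down an explicit $R$-linear isomorphism $\Phi$ between the two sides that carries a basis onto a basis, and then to verify that it intertwines both actions. By the basis theorem for the mixed tensor space together with the preceding corollary, the quotient $V(\trianglerighteq(\lambda,\mu))/V(\rhd(\lambda,\mu))$ is $R$-free with basis $\{\overline{v_{(\aa,\bb),(\tt,\uu,\vv)}}\mid(\aa,\bb)\in\Rat(\lambda,\mu),\ (\tt,\uu,\vv)\in M_0(\lambda,\mu)\}$, where the bar denotes cosets. Since $X^{(\lambda,\mu)}$ has $R$-basis $\{x_{(\aa,\bb)}\mid(\aa,\bb)\in\Rat(\lambda,\mu)\}$ (the $x_{(\aa,\bb)}$ being the images of the $v_{(\aa,\bb),(\tt,\uu,\vv)}$ for the triple used to define $X^{(\lambda,\mu)}$) and $\overline{C}^{(\lambda,\mu)}$ has $R$-basis $\{c_{(\tt,\uu,\vv)}\mid(\tt,\uu,\vv)\in M_0(\lambda,\mu)\}$, the prescription
\[
\Phi\colon X^{(\lambda,\mu)}\otimes_R\overline{C}^{(\lambda,\mu)}\longrightarrow V(\trianglerighteq(\lambda,\mu))/V(\rhd(\lambda,\mu)),\qquad x_{(\aa,\bb)}\otimes c_{(\tt,\uu,\vv)}\longmapsto\overline{v_{(\aa,\bb),(\tt,\uu,\vv)}},
\]
defines an $R$-module isomorphism, since it maps a basis bijectively onto a basis. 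It remains to see that $\Phi$ is $U$-linear on the left and $B_{r,s}(n)$-linear on the right; note that $\mathrm{ann}$ acts as zero on every subquotient of mixed tensor space, so the right action on the target factors through $\Endo$, as it must for $\Phi$ to make sense.

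For left $U$-linearity I would use that $v_{(\aa,\bb),(\tt,\uu,\vv)}=v_{\aa,\bb}\,m_{(\tt,\uu,\vv)}$ with $m_{(\tt,\uu,\vv)}$ a linear combination of generalized walled Brauer diagrams, each of which induces a $U$-module homomorphism and whose action therefore commutes with that of $U$ by the version of Schur--Weyl duality valid over $R$ (Theorem~\ref{theorem:Schur-Weyl-duality}). Hence $u\cdot v_{(\aa,\bb),(\tt,\uu,\vv)}=(u\cdot v_{\aa,\bb})\,m_{(\tt,\uu,\vv)}$, so for a fixed triple $(\tt,\uu,\vv)\in M_0(\lambda,\mu)$ the $U$-action on the span of the $v_{(\aa,\bb),(\tt,\uu,\vv)}$ modulo $V(\rhd(\lambda,\mu))$ is independent of the triple, and the assignment $x_{(\aa,\bb)}\mapsto\overline{v_{(\aa,\bb),(\tt,\uu,\vv)}}$ is a $U$-isomorphism onto it. This is exactly the content (and the proof) of the independence statement established just before the definition of $X^{(\lambda,\mu)}$; consequently $\Phi$ is $U$-linear.

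The heart of the proof is right $B_{r,s}(n)$-linearity, i.e.\ that for $a\in B_{r,s}(n)$,
\[
\overline{v_{(\aa,\bb),(\tt,\uu,\vv)}}\cdot a=\sum_{(\tt',\uu',\vv')\in M_0(\lambda,\mu)}r_a\big((\tt,\uu,\vv),(\tt',\uu',\vv')\big)\,\overline{v_{(\aa,\bb),(\tt',\uu',\vv')}},
\]
where $r_a$ are the structure constants of the cell module $\overline{C}^{(\lambda,\mu)}$ of $\Endo$. I would obtain this by transporting the straightening calculus. Writing $v_{(\aa,\bb),(\tt,\uu,\vv)}=v_{\aa,\bb}\,m_{\lambda,\mu}\,b_{(\tt,\uu,\vv)}$, only $b_{(\tt,\uu,\vv)}a$ is affected by right multiplication, and precomposing with $b_{(\tt_0,\uu_0,\vv_0)}^{*}$ identifies $m_{(\tt,\uu,\vv)}a$ with $m_{(\tt_0,\uu_0,\vv_0),(\tt,\uu,\vv)}a\in B_{r,s}(n)$; by (the proof of) Lemma~\ref{lemma:straightening} the latter is congruent modulo $B^{\rhd(\lambda,\mu)}$ to a linear combination of the $m_{(\tt_0,\uu_0,\vv_0),(\tt',\uu',\vv')}$, $(\tt',\uu',\vv')\in M(\lambda,\mu)$, whose coefficients are by definition the structure constants of the cell module of $B_{r,s}(x)$. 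Specializing $x=n$, passing to $\Endo=B_{r,s}(n)/\mathrm{ann}$ and re-expanding in the cellular basis of $\Endo$ of Theorem~\ref{theorem:cellular_basis_endom} (which replaces $M(\lambda,\mu)$ by $M_0(\lambda,\mu)$) turns these into exactly the $r_a$ above, with the error staying in $\Endo(\rhd(\lambda,\mu))$. Finally I would precompose the resulting identity with $v_{\aa,\bb}$: since $v_{\aa,\bb}\,m_{\lambda',\mu'}(\cdots)$ with $(\lambda',\mu')\rhd(\lambda,\mu)$ straightens, by Theorem~\ref{theorem:Straigthening} and the argument proving the basis theorem for mixed tensor space, into a linear combination of $v_{(\aa'',\bb''),(\tt'',\uu'',\vv'')}$ of shape $\rhd(\lambda,\mu)$, every error term is absorbed into $V(\rhd(\lambda,\mu))$ and the displayed identity follows. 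I expect this matching — carrying a single relation through the three successive ``modulo higher terms'' reductions (straighten in $B_{r,s}(x)$; reduce mod $\mathrm{ann}$ and re-expand in the $\Endo$-basis; precompose with $v_{\aa,\bb}$ and absorb into $V(\rhd(\lambda,\mu))$) while checking that the structure constants are not altered — to be the one genuinely delicate step; everything else is bookkeeping.

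Granting that $\Phi$ is a bimodule isomorphism, the filtration statement is immediate. Choose a linear ordering $(\lambda_1,\mu_1),(\lambda_2,\mu_2),\dots,(\lambda_N,\mu_N)$ of $\Lambda_0(r,s)$ refining $\trianglerighteq$, so that $(\lambda_i,\mu_i)\rhd(\lambda_j,\mu_j)$ implies $i<j$. Then $W_j:=\sum_{i\le j}V(\trianglerighteq(\lambda_i,\mu_i))$ is a $U$-$B_{r,s}(n)$-subbimodule of $V^{\otimes r}\otimes{V^*}^{\otimes s}$ for each $j$ (a sum of such by the corollary above), with $W_0=\{0\}$, $W_N=V^{\otimes r}\otimes{V^*}^{\otimes s}$, and $W_{j-1}\cap V(\trianglerighteq(\lambda_j,\mu_j))=V(\rhd(\lambda_j,\mu_j))$ (as one checks on basis elements), whence
\[
W_j/W_{j-1}\cong V(\trianglerighteq(\lambda_j,\mu_j))/V(\rhd(\lambda_j,\mu_j))\cong X^{(\lambda_j,\mu_j)}\otimes\overline{C}^{(\lambda_j,\mu_j)}
\]
as bimodules. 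Choosing an $R$-basis of $X^{(\lambda,\mu)}$ realises each layer as a direct sum of $\operatorname{rank}_R X^{(\lambda,\mu)}$ copies of the cell module $\overline{C}^{(\lambda,\mu)}$ of $\Endo$, and refining $W_\bullet$ accordingly gives a filtration of the mixed tensor space by cell modules of $\Endo$ (hence, by inflation, as a $B_{r,s}(n)$-module); choosing instead an $R$-basis of $\overline{C}^{(\lambda,\mu)}$ realises each layer as a direct sum of $\operatorname{rank}_R\overline{C}^{(\lambda,\mu)}$ copies of $X^{(\lambda,\mu)}$, which is a dual Weyl module by Proposition~\ref{prop:Weyl_module}, yielding the filtration by dual Weyl modules as a $U$-module.
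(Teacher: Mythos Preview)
Your proposal is correct and is precisely the ``straight forward from the construction of the basis'' argument the paper alludes to: the explicit bijection of bases gives the $R$-linear isomorphism, $U$-linearity follows because $m_{(\tt,\uu,\vv)}$ is a $U$-map, and right $B_{r,s}(n)$-linearity follows because the straightening (Lemma~\ref{lemma:straightening} and the proof of the basis theorem) shows that the structure constants on the quotient are exactly those of the cell module. The paper gives no further detail, so your write-up is a faithful unpacking of the intended proof.
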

\begin{proof} 
This is straight forward from the construction of the basis. 
\end{proof}




\begin{thebibliography}{00}


\bibitem{bchlls}
  G. Benkart, M. Chakrabarti, T. Halverson, R. Leduc, C. Lee, J. Stroomer,
  `Tensor product representations of general linear groups and their
  connections with Brauer algebras',
  {\em J. Alg. 166(3)} (1994), 529-567.

  
\bibitem{brauer}
  R. Brauer,
  `On algebras which are connected with the semisimple continuous groups', 
  {\em Ann. of Math. (2)} 38 (1937), 857-872.


\bibitem{carterlusztig}
  R. W. Carter, G. Lusztig,
  `On the modular representations of the general linear and symmetric groups',
  {\em Math. Z.} 136 (1974), 193-242.


\bibitem{dipperdotystoll1}
  R. Dipper, S. Doty, F. Stoll,
  `The quantized walled Brauer algebra and mixed tensor space', 
  {\em Algebr. Represent. Theory} 17(2) (2014), 675-701.


\bibitem{dipperdotystoll2}
  R. Dipper, S. Doty, F. Stoll, 
  `Quantized mixed tensor space and Schur Weyl duality', 
  {\em Algebra Number Theory} 7(5) (2013), 1121-1146.


\bibitem{fultonharris}
  W. Fulton, J. Harris,
  `Representation Theory: A First Course',
  {\em Graduate Texts in Mathematics} 129,
  Springer, New York (1991).


\bibitem{goodmangraber}
  F.M. Goodman, J. Graber,
  `On cellular algebras with Jucys--Murphy elements',
  {\em J. Alg.} 330 (2011), 147-176.


\bibitem{grahamlehrer}
  {J.J. Graham, G.I. Lehrer},
  `Cellular algebras', 
  {\em Invent. math.} 123(1) (1996), 1-34.


\bibitem{green}
  J.A. Green,
  `Polynomial representations of {${\rm GL}_{n}$}',
  {\em Lecture Notes in Mathematics} 830,
  augmented,
  With an appendix on Schensted correspondence and Littelmann
  paths by K. Erdmann, Green and M. Schocker,
  Springer, Berlin (2007).


\bibitem{haerterich}
  M. H\"arterich,
  `Murphy bases of generalized Temperley-Lieb algebras',
 {\em  Arch. Math. } 72(5) (1999), 337-345.


\bibitem{henkepaget}
  A. Henke, R. Paget,
  `Brauer algebras with parameter {$n=2$} acting on tensor space',
  {\em Algebr. Represent. Theory} 11(6) (2008), 545-575.


\bibitem{james}
  G.D. James,
  `The representation theory of the symmetric groups',
  {\em Lecture Notes in Mathematics} 682,
  Springer-Verlag, Berlin (1978).


\bibitem{koike} 
  K. Koike, 
  `On the decomposition of tensor products of the representations of the
  classical groups: by means of the universal characters',
  {\em Adv. Math.} 74(1) (1989), 57-86.


\bibitem{littlewoodrichardson}
  D.E. Littlewood, A.R. Richardson, 
  `Group Characters and Algebra',
  {\em Philos. Trans. R. Soc. Lond. Ser. A}
  233 (1934), 99-141.


\bibitem{lusztig}
  G. Lusztig,
  `Introduction to quantum groups',
  {\em Progress in Mathematics} 110,
  Birkh\"auser Boston, Inc., Boston, MA (1993).


\bibitem{mathas}
  A. Mathas,
  `Iwahori-{H}ecke algebras and {S}chur algebras of the symmetric group',
  {\em University Lecture Series} 15,
  American Mathematical Society, Providence, RI (1999).
 
\bibitem{murphy}
  G.E. Murphy,
  `The representations of Hecke algebras of type $A_n$', 
  {\em J. Alg.} 173(1) (1995), 97-121.

\bibitem{schur}
 I. Schur,
 `\"Uber die rationalen Darstellungen der allgemeinen linearen Gruppe',
(1927) reprinted in: I. Schur, {\em Gesammelte Abhandlungen} III, Springer, Berlin
(1973), 68-85.



\bibitem{stembridge}
  J.R. Stembridge,
  `Rational Tableaux and the Tensor Algebra of $\mathrm{gl}_n$',
  {\em J.~Comb.~Theory, Ser.~A} 46(1) (1987), 79-120.


\bibitem{turaev}
  V.G. Turaev,
  `Operator invariants of tangles and $R$-matrices (Russian)',
  {\em Izv. Akad. Nauk SSSR Ser. Mat.} 53(5) (1989), 1073-1107.


\bibitem{weyl}
  H. Weyl,
  `The {C}lassical {G}roups. {T}heir {I}nvariants and {R}epresentations',
  Princeton University Press, Princeton, N.J. (1939).


\end{thebibliography}
\end{document}